\documentclass[a4paper] {article}[12pt]

\usepackage[top=2.7cm, bottom=2.7cm, left=1.7cm, right= 1.7cm]{geometry}

\makeatletter
\renewcommand*\l@section{\@dottedtocline{1}{1.5em}{2.3em}}
\makeatother

\usepackage{amsfonts}
\usepackage{amssymb}
\usepackage[T1]{fontenc}

\usepackage{tikz}
\usetikzlibrary{calc}

\usepackage{CJK}
\usepackage{amsmath}
 
\usepackage{amsfonts}
\usepackage{amssymb}
\usepackage{amsthm}
\usepackage{amssymb}
\usepackage{enumerate}
\usepackage[calc]{picture}
\usepackage[all,cmtip]{xy}

\usepackage[mathscr]{eucal}
\usepackage{eqlist}

\usepackage{color}
\usepackage{abstract} 
\usepackage[T1]{fontenc}
 
\theoremstyle{plain}
\newtheorem{theorem}{Theorem}
\newtheorem{proposition}[theorem]{Proposition}
\newtheorem{lemma}[theorem]{Lemma}

\newtheorem{example}[theorem]{Example}

\theoremstyle{definition}
\newtheorem{definition}{Definition}

\usepackage{etoolbox}
\newtheoremstyle{myrem}
 {3pt}
 {3pt}
 {\normalsize}
 { }
 {\itshape}
 {:}
 { }
 {}

 \theoremstyle{myrem}
 \newtheorem{remark}{Remark}
 \appto\remark{\leftskip\parindent}
 \appto\remark{\rightskip\parindent}

\usepackage{amsmath}

\numberwithin{equation}{section}
\numberwithin{theorem}{section}

\begin{document}

\begin{center}
{\Large{\textbf{Homological   critical  points  for   hypergraphs  and  simplicial  complexes 
 }}}

 \vspace{0.5cm}
 
 Shiquan Ren 

  \vspace{0.5cm}

{\small
\begin{quote}
\begin{abstract}
In  this  paper,  
we  study  the  homological     critical  points  for  
persistence  hypergraphs  and  persistence  simplicial  complexes.  
With  the  help  of  the  relative  homology groups,  
we  define the  homological  critical  points  for  persistence  morphisms 
between  persistence  hypergraphs  and  persistence  simplicial  maps  
between  persistence  simplicial  complexes.  
We  prove  some  commutative  diagrams  of   subset  relations  
for  the  homological  critical  points  of   persistence  hypergraphs  as  
well  as  persistence  morphisms  between  them. 
We  also  prove  some  commutative  diagrams  of   subset  relations    for  
the  homological  critical  points  of  
persistence  simplicial  complexes  as  well  as  persistence  simplicial  maps 
between  them.  
 As     examples,   we   use   the  parametric  configuration  spaces  to  
    construct   the  parametric  independence  complexes  as  the space  of  parametric   packings  
 and  construct   the  parametric  dominating  hypergraphs   as   the  space  of  parametric  coverings.    
  \end{abstract}
\end{quote}
}

\begin{quote}
 {\bf 2020 Mathematics Subject Classification.}  	Primary  55N31,  55U10; 
 Secondary   55R80, 	05C65

{\bf Keywords and Phrases.}    
    simplicial  complexes,  hypergraphs,  configuration  spaces,  
    persistent  homology,  Morse theory, 
    critical  points   
\end{quote}

\end{center}

\section{Introduction}

\subsection{Configuration  spaces}

Let   $X$  be a  Hausdorff  space and  let  $X^n$  be  the  $n$-fold  Cartesian  product  of  $X$.  
The  $n$-th  symmetric  group  $\Sigma_n$  acts  on  $X^n$  by  permuting  the  order  of  
the  coordinates.  The  orbit  space  $X^n/\Sigma_n$  is  the  {\it  symmetric  product}  of  $X$
(cf.  \cite{dold1,dold2}).   If   $X$  is  connected,  has  the   homotopy  type  of  a  
CW-complex  and   has   a  fixed  base-point,  then a  
canonical  inclusion   is   given  from  $X^n/\Sigma_n$  to  $X^{n+1}/\Sigma_{n+1}$
 and  the  union  of  the   symmetric  products  for  all  $n$  is a  product  of  Eilenberg-MacLane  spaces
 (cf.  Dold-Thom  \cite{dold2}).

Note  that  the  $\Sigma_n$-action  on  $X^n$  is not  free. 
By  considering the  $n$-th  {\it   ordered  configuration  space} 
${\rm  Conf}_n(X)$  as  the  open  subspace  of   $X^n$  consisting  
of  all  the  $n$-tuples  $(x_1,\ldots,x_n)$  such  that  $x_i\neq  x_j$  for  any  $i\neq  j$,  
the  restriction  of  the  $\Sigma_n$-action  on  ${\rm  Conf}_n(X)$  is  free  and  properly  discontinuous.  
The  orbit  space  ${\rm  Conf}_n(X)/\Sigma_n$  is  the  
$n$-th  {\it   unordered  configuration  space},
 which  is   the  open  subspace  of   $X^n/\Sigma_n$  consisting  
of  all  the  $n$-sets  $\{x_1,\ldots,x_n\}$  of  distinct   points  in  $X$.

  Configuration  spaces   as  well as  their   homology  and   homotopy  types  have  been  widely  studied  
 since  the  second  half    of  the  twentieth  century.  
  For  example,   
  D.  McDuff  \cite{mcduff75}
  studied     the   homotopy  types  of  configuration  spaces  on  manifolds  in  1975; 
      C.-F. B\"odigheimer  and  F.R.  Cohen  \cite{88-h-conf}
    and   C.-F.   B\"odigheimer,  F.  Cohen  and  L.   Taylor  \cite{87-h-conf,89-h-conf} 
    studied   
    the  homology  groups  of  configuration  spaces  in  the  1980s;  
  R.  Longoni   and   P.   Salvatore  \cite{topol05}   proved   that  
 the homotopy types  of   closed compact smooth
manifolds   cannot  determine   the homotopy types  of  the  configuration  spaces   in  2005;
A.J.  Berrick,  F.R.  Cohen, Y.L.   Wong  and  J.   Wu  \cite{jams-06}  
  proved    connections between braid groups and   homotopy groups of the  sphere
  by  using  the  simplicial and $\Delta$-structures on fundamental groups of configuration spaces
  in  2006;  
     T.  Church  \cite{invent2012}  and 
     T.  Church, J.  S. Ellenberg   and   B.  Farb  \cite{duke}
     studied  the  homological  stability  of  configuration  spaces  by  using  FI-modules
     in  the  2010s;        
N.  Idrissi  \cite{inv19}
     used  the  CDGA  model  by  P.  Lambrechts  and  D.  Stanley  \cite{poincare,agt}
     and  proved  the real homotopy invariance of 
     configuration  spaces   for simply-connected closed smooth manifolds  
     in  2019.

  Suppose  $X$  is  equipped  with  an  extended  metric  $d$.  
  For any  $\mathbf{t}=(-t_1,t_2)$  with  $0\leq  t_1<  t_2\leq  +\infty$,  
   let   ${\rm  Conf}_n(X,\mathbf{t})$   be  
   the  subspace  of   ${\rm   Conf}_n(X)$  consisting  
of  all  the  $n$-tuples  $(x_1,\ldots,x_n)$  such  that  $2t_1< d(x_i,  x_j)\leq   2t_2$  for  any  $i\neq  j$.    
   Let  $\mathbf{t}$  run  over  all  the  possible  values  with  the  constraint 
   $0\leq  t_1<  t_2\leq  +\infty$.  
   Then  we  obtain  a  family 
   ${\rm  Conf}_n(X,-)$  of  subspaces  of  
   the configuration  space,  which  will  be  called  the  {\it  parametric  configuration  space}  
   in  this  paper.  
   In  particular,  if  $t_2=+\infty$,  
   then   ${\rm  Conf}_n(X,\mathbf{t})$
    is  called  the  {\it   configuration  spaces  of  disks}  by   H.   Alpert  and  Fedor Manin 
   \cite{gt},      is   called  the  {\it   configuration spaces of hard spheres}  by
   Y.  Baryshnikov, P.   Bubenik  and  M. Kahle  \cite{imrn1}, 
   and    
    is  the  space  of  parametric  packings  of   $n$-copies  of  $t_1$-balls  in  $X$  
   by   M.  Gromov  \cite{gromov1}.

   Let  ${\rm  Ind}(X)$  be  the  
   simplicial  complex  given  by  the  union  of  ${\rm  Conf}_n(X)/\Sigma_n$ 
   and  let  ${\rm  Ind}(X, \mathbf{t})$  be  the  
   simplicial  complex  given  by  the  union  of  ${\rm  Conf}_n(X,\mathbf{t})/\Sigma_n$, 
    for  all  $n$.  
    As  $\mathbf{t}$  varies,  
    we  have  a  persistence   simplicial  complex  ${\rm  Ind}(X,-)$
        (cf.  Example~\ref{ex-26.9.15.1}). 
    In  particular,  if   $X$  is  the  vertex  set  $V$  of  a  graph  $G$,    
    $d$  is  the  extended  geodesic  distance  $d_G$  on  $V$  and  
    $\mathbf{t}=(-1/2, +\infty)$,  
    then   ${\rm  Ind}(V, \mathbf{t})$  is  the  classical  independence  complex 
    ${\rm  Ind}(G)$ 
    and   ${\rm  Ind}(V, -)$  is  the  persistence  independence  complex 
    ${\rm  Ind}(G^-)$  of  the  distance  powers  $G^-$.   
 (cf.  Example~\ref{ex-26.9.15.3}). 
 M. Adamaszek  \cite{split-ind}  studied
 the  homotopy types  of  ${\rm  Ind}(G^-)$     in 2012.

   \subsection{Critical  points   in  Morse  theory    and  persistent  homology}

In  classical  Morse  theory, 
 a  Morse-Smale  function  together  with  its  critical  points 
 and  the  gradient  flow  lines  between   them 
 characterizes  the homotopy  type  of  the  sublevel  sets 
 and  hence  of  the  manifold  itself.    
   During  the  second  half  of  the  twentieth  century,  
     M.  Gromov  \cite{gromov2},   J.  Cheeger  \cite{cheeger},  
    K. Grove  \cite{grove},  and  others
   studied  
   the  critical  points  of   distance  functions   on  Riemannian  manifolds
   as     generalizations  of  the  classical  Morse  theory.   
   Later,   
      V.  Gershkovich  and  H.  Rubinstein  \cite{g-r} studied  the  critical  points  for  
    the  minima  of  the  distance  functions,  called  min-type  functions,   with  respect to 
    a   finite  number  of  distinct  points  on  a  Riemannian manifold   in  1997;  and     
      Y.  Baryshnikov, P.   Bubenik  and  M. Kahle  \cite{imrn1}    applied  the     Morse  theory  
      of  related   min-type  functions  
            to 
     study   the   homotopy  types  of  the  parametric  configuration  spaces   
      in  2014.

      On  the  other  hand,  
   by  applying  the homology  functor  to  the  parametric configuration  spaces, 
   the  persistent  homology   is   studied.  
   For  example,  
   G.  Carlsson, J.  Gorham, M.  Kahle   and J.  Mason  \cite{phys}
   explored  the Betti  numbers and  the  topology of configuration spaces of hard disks  in  the  unit  square
    experimentally  in  2011;  
   and  
   H.   Alpert  and  Fedor Manin  \cite{gt}  studied  
         the  persistent homology     for  the  parametric  configuration  spaces 
         on  a  strip  in  the plane,  with  coefficients  from  integers, 
         in 2024. 
     The  critical  points  for  the  persistent  homology  with  coefficients  from a  
     field are 
     characterized  by  the  birth-times  and  death-times  of the  generators,
     which  are  given  by  the  persistence  diagrams  (cf.  \cite{pd1}).  
     However,  
    the  critical  points  for  the  persistent  homology  with  coefficients  from  integers
    may  not  be  fully  characterized  by  the  persistence  diagrams.  
        The   critical  points  for  the  persistent  homology  
        can  partially  detect  the  changes  of  the  homotopy  types,
        while  the  converse  is  not   true  in  general.

  \subsection{Results  of  this  paper}
  
  In  this  paper,   we  generalize  the  homological  and  homotopic    critical  points    
  for  persistence  spaces  and  define   the  absolute  homological  and 
   the  absolute  homotopic    critical  points
  for  persistence  spaces   (cf.  Definition~\ref{def-26.9.8.1},  \ref{def-26.8.25.1}, 
   \ref{def-26.9.12.1},  \ref{def-26.9.12.22},   \ref{def-26.9.14.s5.1}, \ref{def-26.9.14.22})  
  as  well  as  the   the  homological       critical  points  for  persistence  maps
  between    persistence  spaces   
  (cf.  Definition~\ref{def-26-9-9-rel1},  \ref{def-26-9-10-rel1},  \ref{def-26-9-12-rel1},  \ref{def-26-9-14-rel1},
  \ref{def-26-9-14-rel5},  
   \ref{def-26-9-14-rel9})
   by  means  of  the  relative  homology. 
   We  use  the  classical  relative     homology  of   simplicial  complex   pairs
   to  define  the  geometric-absolute  homological   critical  points  
   for  persistence   simplicial  complexes and  the  homological  critical  points 
   for  persistence  simplicial   maps.     
   We  use   the   relative  embedded  homology  of  graded  subgroup  pairs  of  chain  complexes
   introduced  by      J.  Wu,   M.  Zhang  and  the  present  author     \cite[Section~2.1]{jktr2022-2}
    in  2022  
   and  use  the
    relative  embedded  homology  of  hypergraph  pairs 
    introduced  in  \cite[Section~3.1]{jktr2022-2}  
      to  define  the  algebraic-absolute  homological   critical  points
   and     the  combinatorial-absolute  homological   critical  points
   respectively  
   for  persistence  hypergraphs  as  well  as  for  persistence  simplicial  complexes.  
  We  also  use  the  embedded  homology  of  hypergraph  pairs 
       to  define  the  homological  critical  points 
   for  persistence  morphisms  between  persistence  hypergraphs.

      In  the  next  theorem, 
 let  ${\rm    Cr}^{H} $,
${\rm     CACr}^{H} $  and    
  ${\rm    AACr}^{H} $  respectively 
be  
the  set  of  all      homological   critical  points,    
the  set  of  all    combinatorial-absolute  homological   critical  points  
and   
 the  set  of  all  algebraic-absolute  homological   critical  points
 of    a  persistence  hyper(di)graph.  
 For  any  persistence  morphism  $f$  between    persistence  hyper(di)graphs,  
 let  ${\rm  Cr}^H(f)$  be  the  set  of  all      homological   critical  points
 of  $f$.   
For  any  field  $\mathbb{F}$,  
similar  notations  apply  for   ${\rm    Cr}^{H\mathbb{F}} $,
${\rm     CACr}^{H\mathbb{F}} $,        
  ${\rm    AACr}^{H\mathbb{F}} $
  and   ${\rm  Cr}^{H\mathbb{F}}(f)$ 
  where  the  coefficients of  all  the  homology  groups are from  $\mathbb{F}$.  
  The  next theorem  follows  from   Proposition~\ref{pr-26.9.12.phdg1}  --  
  \ref{th-26.9.h9.12}  and  Proposition~\ref{pr-26.9.13.phg1}  --  \ref{th-26.9.h14.12}.

 \begin{theorem}\label{main-th-2}
 For  any  persistence  hyperdigraph    on  any  persistence  space  $X(-)$  
 or  any   persistence   hypergraph    on  a   persistence  space  $X(-)$  with  
 a   persistence  topological embedding  in  $\mathbb{R}$,  
 there  is  a  commutative  diagram  
  \begin{eqnarray*} 
\xymatrix{
{\rm    Cr}^{H\mathbb{F}} 
\ar[r] \ar[d]
& {\rm     CACr}^{H\mathbb{F}} 
 \ar[r] \ar[d]
&   {\rm    AACr}^{H\mathbb{F}} 
\ar[d]\\
{\rm    Cr}^{H }  
\ar[r]  
& {\rm     CACr}^{H } 
 \ar[r]  
&   {\rm    AACr}^{H } 
}
\end{eqnarray*}
where  all  the  maps  are  inclusions.  
Moreover,  
 for  any  persistence  morphism  $f$
 between  hyperdigraphs    on  any  persistence  spaces   $X(-)$  and  $X'(-)$  
 or  any  persistence   injective   morphism   $f$   between  persistence  hypergraphs    on 
  the   persistence  spaces  $X(-)$  and  $X'(-)$   with  
    persistence  topological embeddings  in  $\mathbb{R}$,  
 there  is  a  commutative  diagram
 \begin{eqnarray*} 
 \xymatrix{
  {\rm  CA  Cr}^{H\mathbb{F}} \ar[r]\ar[d]
  &{\rm C A  Cr}^H  \ar[d]
 \\
   {\rm  CA  Cr}^{H\mathbb{F}} \cup  {\rm  Cr}^{H\mathbb{F}} (f)\ar[r]
  &{\rm  C A  Cr}^H \cup  {\rm  Cr}^{H} (f) 
     }
 \end{eqnarray*}
 where  all  the  maps are  inclusions. 
 \end{theorem}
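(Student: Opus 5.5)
The plan is to assemble the two diagrams from the propositions cited just before the statement (Proposition~\ref{pr-26.9.12.phdg1} through \ref{th-26.9.h9.12} for hyperdigraphs, and Proposition~\ref{pr-26.9.13.phg1} through \ref{th-26.9.h14.12} for hypergraphs). Once each arrow is shown to be an inclusion of subsets of the parameter set, commutativity is automatic. All objects are subsets of the same parameter poset, and inclusions compose uniquely. So the real content is proving each individual inclusion.

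For the horizontal arrows ${\rm Cr}^{H}\subseteq{\rm CACr}^{H}\subseteq{\rm AACr}^{H}$, with or without $\mathbb{F}$, I would argue by contraposition. Suppose $\mathbf{t}$ is not algebraic-absolute critical. Then the relative embedded homology of the graded subgroup pair of chains at the two nearby parameters vanishes. The relative embedded homology of the corresponding hypergraph pair is computed from the infimum and supremum chain complexes of \cite[Section~2.1, 3.1]{jktr2022-2}. I would use the long exact sequence of the pair, together with the fact that the combinatorial relative homology factors through the algebraic one, to conclude that the combinatorial relative homology also vanishes. Vanishing of the combinatorial relative homology then forces the structure map on embedded homology to be an isomorphism, by the long exact sequence again, so $\mathbf{t}$ is not homologically critical. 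In the hypergraph case, the hypothesis of a persistence embedding in $\mathbb{R}$ will be used to fix a canonical orientation of hyperedges. This turns the hypergraph into a hyperdigraph and reduces that case to the previous one; I expect the earlier propositions to record exactly this reduction.

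For the vertical arrows ${\rm Cr}^{H\mathbb{F}}\subseteq{\rm Cr}^{H}$ and the like, the approach is again contrapositive. If the relevant integral (relative) homology vanishes, or the integral structure map is an isomorphism, the universal coefficient theorem gives the same over $\mathbb{F}$. The chain groups involved are free abelian, being subgroups of free simplicial chain groups. Writing $H_*(C;\mathbb{F})$ in terms of $H_*(C)\otimes\mathbb{F}$ and ${\rm Tor}(H_{*-1}(C),\mathbb{F})$, naturality of the universal coefficient sequence and the five lemma handle the isomorphism version.

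For the second diagram, the vertical maps are trivially inclusions into unions, and the top map is the first diagram's ${\rm CACr}^{H\mathbb{F}}\subseteq{\rm CACr}^{H}$. It remains to show ${\rm Cr}^{H\mathbb{F}}(f)\subseteq{\rm CA Cr}^H\cup{\rm Cr}^H(f)$. Take $\mathbf{t}$ outside the right-hand side. Then the integral relative embedded homology of the pair defined by $f$ vanishes, and so, by the same universal coefficient argument applied to the relative complex (a quotient of free groups that is itself free, using injectivity of $f$ in the hypergraph case), the $\mathbb{F}$-version vanishes. Hence $\mathbf{t}\notin{\rm Cr}^{H\mathbb{F}}(f)$.

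I expect the main obstacle to be the freeness of the relative embedded chain complexes needed for the universal coefficient theorem. Infimum and supremum chain complexes are not obviously compatible with tensoring by $\mathbb{F}$: for instance, the infimum complex over $\mathbb{F}$ need not equal the integral infimum complex tensored with $\mathbb{F}$. My first attempt would be to use the supremum complex, which is spanned by hyperedges and hence has a basis. Failing that, I would use the explicit descriptions in the cited propositions to compare the two coefficient versions directly.
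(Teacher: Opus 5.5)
\textbf{Main gap: the second diagram.} You read its vertical arrows as trivial inclusions into a union. In the statement being proved (Propositions~\ref{th-26.9.h9.12} and~\ref{th-26.9.h14.12}), however, the top row consists of critical sets of the \emph{source} $\vec{\mathcal{H}}(-)$ and the bottom row of the \emph{target} $\vec{\mathcal{H}}'(-)$. The substantive claim is therefore ${\rm CACr}^{H}(\vec{\mathcal{H}}(-))\subseteq{\rm CACr}^{H}(\vec{\mathcal{H}}'(-))\cup{\rm Cr}^{H}(f)$, together with its $\mathbb{F}$-analogue, and your proposal contains no argument for it. A symptom of the misreading is that, on your reading, the injectivity hypothesis on $f$ does no work: the relative ${\rm Inf}$ complex of any hypergraph pair is already free.

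What is needed is the following. Let $\mathbf{t}$ be combinatorial-absolute regular for $\vec{\mathcal{H}}'(-)$ and regular for $f$, on a common neighbourhood $U$, and let $\vec{\mathcal{A}}(\mathbf{r})\subseteq\vec{\mathcal{H}}(\mathbf{r})$.
\begin{enumerate}[(i)]
\item Apply the first hypothesis to the sub-hyperdigraph $f(\vec{\mathcal{A}}(\mathbf{r}))$ of $\vec{\mathcal{H}}'(\mathbf{r})$. Its image under $\varphi'(\mathbf{r},\mathbf{s})$ is $f(\vec{\mathcal{A}}(\mathbf{s}))$ because $\varphi' f=f\varphi$. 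Apply the second hypothesis to the pair $(\vec{\mathcal{H}}',f(\vec{\mathcal{H}}))$. This gives isomorphisms on $H_\bullet(\vec{\mathcal{H}}',f(\vec{\mathcal{A}}))$ and on $H_\bullet(\vec{\mathcal{H}}',f(\vec{\mathcal{H}}))$.
\item A triple lemma for relative embedded homology (Lemma~\ref{le-26.9.h9.5}) then gives an isomorphism on $H_\bullet(f(\vec{\mathcal{H}}),f(\vec{\mathcal{A}}))$. It rests on the short exact sequence $0\to{\rm Inf}(\vec{\mathcal{B}})/{\rm Inf}(\vec{\mathcal{A}})\to{\rm Inf}(\vec{\mathcal{H}})/{\rm Inf}(\vec{\mathcal{A}})\to{\rm Inf}(\vec{\mathcal{H}})/{\rm Inf}(\vec{\mathcal{B}})\to 0$ with mapping cones, or equivalently the five lemma on the ladder of long exact sequences.
\item Injectivity of $f$ identifies this pair, compatibly with the structure maps, with $(\vec{\mathcal{H}},\vec{\mathcal{A}})$.
\end{enumerate}
This is the analogue of Step~2 in the proof of Proposition~\ref{th-26.9.a9.12}, and it is the real content of the second half of the theorem.

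\textbf{Horizontal arrows.} Here, and again for ${\rm Cr}^{H\mathbb{F}}(f)$, you speak of a relative group that ``vanishes'' and invoke long exact sequences. But regularity in Definitions~\ref{def-26.9.12.1} and~\ref{def-26-9-12-rel1} requires $\varphi(\mathbf{r},\mathbf{s})_*$ between the relative groups at $\mathbf{r}$ and at $\mathbf{s}$ to be an isomorphism; nothing vanishes. The inclusions are pure specialization, as in the paper. Taking $G(\mathbf{r})=\mathbb{Z}(\vec{\mathcal{A}}(\mathbf{r}))$, for which $\varphi_\#G(\mathbf{r})=\mathbb{Z}(\vec{\mathcal{A}}(\mathbf{s}))$, turns algebraic-absolute regularity into combinatorial-absolute regularity. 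Taking $\vec{\mathcal{A}}(\mathbf{r})=\emptyset$ then gives ordinary regularity.

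\textbf{Coefficients.} Your worry is justified, and passing to ${\rm Sup}$ does not remove it, because $C_\bullet(\Delta\mathcal{H})/{\rm Sup}(\mathcal{H})$ can have torsion. For example, let $\mathcal{H}$ be the ten $2$-simplices of the $6$-vertex $\mathbb{RP}^2$: the integral embedded homology is $0$, while ${\rm Inf}$ or ${\rm Sup}$ formed over $\mathbb{F}_2$ gives $H_2=\mathbb{F}_2$. The paper simply cites the Universal Coefficient Theorem. For the ordinary and combinatorial-absolute columns this is correct provided $\mathbb{F}$-coefficients mean tensoring the integral complexes ${\rm Inf}(\vec{\mathcal{H}})$ and ${\rm Inf}(\vec{\mathcal{H}})/{\rm Inf}(\vec{\mathcal{A}})$ with $\mathbb{F}$, matching the convention $C(-)\otimes\mathbb{F}$ of Section~\ref{sec-3}. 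These complexes are free: the map $x\mapsto([x],[\partial x])$ embeds the quotient in $\mathbb{Z}(\vec{\mathcal{H}})_n/\mathbb{Z}(\vec{\mathcal{A}})_n\oplus\mathbb{Z}(\vec{\mathcal{H}})_{n-1}/\mathbb{Z}(\vec{\mathcal{A}})_{n-1}$. So your naturality-plus-five-lemma argument applies there. In the algebraic-absolute column, quotients by ${\rm Inf}(G)$ for arbitrary graded subgroups $G$ can have torsion, so that step needs more than a citation.
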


 In  the  next  theorem,  
 let  ${\rm    Cr}^{H} $,
 ${\rm     GACr}^{H} $,  
${\rm     CACr}^{H} $
  and  
  ${\rm    AACr}^{H} $  respectively 
be  
the  set  of  all      homological   critical  points,    
the  set  of  all    geometric-absolute  homological   critical  points,    
the  set  of  all    combinatorial-absolute  homological   critical  points  
and   
 the  set  of  all  algebraic-absolute  homological   critical  points
 of  a    persistence   (directed)  simplicial  complex. 
 For  any  (directed)  simplicial  map  $f$,  
 let  ${\rm  Cr}^H(f)$  be  the  set  of  all      homological   critical  points
 of  $f$.   
 For  any  field  $\mathbb{F}$,  
similar  notations  apply  for   ${\rm    Cr}^{H\mathbb{F}} $,
 ${\rm     GACr}^{H\mathbb{F}} $,  
${\rm     CACr}^{H\mathbb{F}} $,  
  ${\rm    AACr}^{H\mathbb{F}} $ 
  and   ${\rm  Cr}^{H\mathbb{F}}(f)$ 
  where  the  coefficients of  all  the  homology  groups are from  $\mathbb{F}$.  
  The  next theorem  follows  from   Proposition~\ref{pr-26.9.14.psc1}  --  \ref{th-26.9.h14.55}.
 
 \begin{theorem}\label{main-th-1}
 For  any  persistence  directed  simplicial  complex    on  any  persistence  space  $X(-)$  
 or  any   persistence   simplicial  complex    on  a   persistence  space  $X(-)$  with  
 a   persistence  topological embedding  in  $\mathbb{R}$,  
 there  is  a  commutative  diagram  
\begin{eqnarray*}
\xymatrix{
{\rm    Cr}^{H\mathbb{F}}   
\ar[r] \ar[d]
& {\rm     GACr}^{H\mathbb{F}} 
 \ar[r] \ar[d]
& {\rm     CACr}^{H\mathbb{F}} 
 \ar[r] \ar[d]
&   {\rm    AACr}^{H\mathbb{F}} 
\ar[d]\\
{\rm    Cr}^{H } 
\ar[r]   
& {\rm     GACr}^{H } 
 \ar[r]  
& {\rm     CACr}^{H } 
 \ar[r]  
&   {\rm    AACr}^{H }  
}
\end{eqnarray*} 
where  all  the  maps  are  inclusions.  
Moreover,  
 for  any  persistence  directed  simplicial  map  $f$
 between  directed  simplicial  complexes    on  any  persistence  spaces  $X(-)$    and  $X'(-)$  
 or  any  persistence    injective   simplicial  map   $f$   between  persistence   simplicial  complexes    on 
  the   persistence  spaces  $X(-)$  and  $X'(-)$    with  
    persistence  topological embeddings  in  $\mathbb{R}$,  
 there  is  a  commutative  diagram
 \begin{eqnarray*} 
 \xymatrix{
  {\rm   GA  Cr}^{H\mathbb{F}} \ar[r]\ar[d]
  &{\rm  GA  Cr}^H  \ar[d]
 \\
   {\rm   GA  Cr}^{H\mathbb{F}} \cup  {\rm  Cr}^{H\mathbb{F}} (f)\ar[r]
  &{\rm  GA  Cr}^H \cup  {\rm  Cr}^{H} (f) 
     }
 \end{eqnarray*}
 where  all  the  maps are  inclusions.  
 \end{theorem}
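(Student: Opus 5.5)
The plan is to reduce Theorem~\ref{main-th-1} to the propositions it is stated to follow from, Proposition~\ref{pr-26.9.14.psc1} through \ref{th-26.9.h14.55}, and to organise the argument around two mechanisms: a chain of implications between critical-point conditions, and universal coefficients for the vertical maps.

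For the horizontal inclusions of the first diagram, I will fix a critical parameter $\mathbf{t}$ and unwind the definitions. The homological critical points are detected by non-isomorphisms of homology along parameter changes. The three absolute versions are detected by non-vanishing of relative homology:
\begin{itemize}
\item the geometric version uses the classical relative homology of the simplicial pair;
\item the combinatorial version uses the relative embedded homology of the hypergraph pair;
\item the algebraic version uses the relative embedded homology of the graded subgroup pairs of chain complexes.
\end{itemize}

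The step ${\rm Cr}^H\subseteq{\rm GACr}^H$ is the long exact sequence of the pair. If the induced map on homology fails to be an isomorphism at $\mathbf{t}$, then the relative homology of the pair is nonzero.

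For ${\rm GACr}^H\subseteq{\rm CACr}^H\subseteq{\rm AACr}^H$, I would use the comparison results from \cite{jktr2022-2}. For a simplicial complex, the embedded homology coincides with simplicial homology. The relative embedded homology of the hypergraph pair is computed from the infimum and supremum chain complexes, and its vanishing forces vanishing of the next finer relative group. I will state these steps contrapositively: if the coarser-indexed relative group vanishes, then so does the other.

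The hypothesis on a persistence embedding in $\mathbb{R}$, or directedness, is needed to make the simplicial complexes and maps vary functorially. The point is to fix an ordering of vertices, so that directed and undirected chains agree and the persistence maps are well defined. Injectivity of $f$ in the undirected case guarantees that no simplex degenerates under $f$, so that $f$ is a morphism of hypergraphs.

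For the vertical inclusions, the plan is to show that vanishing with integer coefficients implies vanishing with coefficients in $\mathbb{F}$, via the universal coefficient theorem. I expect this to be the main obstacle. A relative group $H_*(\,\cdot\,;\mathbb{Z})$ that vanishes in all degrees gives $H_*(\,\cdot\,;\mathbb{F})=0$, because both the tensor term and the Tor term vanish. For the algebraic/embedded versions the same holds, since the infimum chain complexes are free abelian groups, so universal coefficients still applies. For ${\rm Cr}$ itself, I would pass through the mapping cone, so that an isomorphism over $\mathbb{Z}$ in all degrees yields an isomorphism over $\mathbb{F}$.

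The vertical maps point from $\mathbb{F}$ to $\mathbb{Z}$, so the inclusions must be read this way: a point that is critical over $\mathbb{F}$ is critical over $\mathbb{Z}$. This is exactly the contrapositive of the statement above. The whole argument therefore needs to be phrased degreewise-over-all-degrees, since criticality is defined by vanishing in every degree. If the definitions turn out to be per degree, I would instead invoke Tor in degree $n-1$ and adjust.

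Commutativity of both diagrams is automatic, since all maps are inclusions of subsets of the parameter space.

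For the second diagram I will apply the same two mechanisms to ${\rm Cr}^H(f)$, which is defined through the relative homology of the mapping pair induced by $f$. The vertical inclusions are the union of ${\rm GACr}^{H\mathbb{F}}\subseteq{\rm GACr}^H$ with ${\rm Cr}^{H\mathbb{F}}(f)\subseteq{\rm Cr}^H(f)$, the latter again by universal coefficients on the relative complexes of $f$. The horizontal maps are the trivial inclusions into unions.

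Theorem~\ref{main-th-2} is handled identically, with the geometric column removed.
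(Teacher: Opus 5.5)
The gap is in the second (``Moreover'') diagram, which you have reduced to a triviality. In the displayed square the \emph{horizontal} arrows are the coefficient changes $\mathbb{F}\to\mathbb{Z}$. As Propositions~\ref{pr-26.9.14.sc.f} and \ref{th-26.9.h14.55} make explicit, the top row refers to the source of $f$ and the bottom row to its target. So the \emph{vertical} arrows assert
\[
{\rm GACr}^{H}(\mathcal{K}(-))\subseteq {\rm GACr}^{H}(\mathcal{K}'(-))\cup {\rm Cr}^{H}(f)
\]
(and the same over $\mathbb{F}$). This is the real content of the second half of the theorem, and nothing in your proposal proves it.

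The missing argument is that of Proposition~\ref{th-26.9.a9.12}:
\begin{enumerate}
\item Take $\mathbf{t}$ geometric-absolute regular for $\mathcal{K}'(-)$ and regular for $f$, with a common neighborhood $U$.
\item For $\mathbf{r}\leq\mathbf{s}$ in $U$ and a subcomplex $A(\mathbf{r})\subseteq\mathcal{K}(\mathbf{r})$, use $f(\mathbf{r})(A(\mathbf{r}))$ as the test subcomplex of $\mathcal{K}'(\mathbf{r})$. By (\ref{eq-26.9.14.1}), $\varphi'(\mathbf{r},\mathbf{s})$ carries it onto $f(\mathbf{s})(A(\mathbf{s}))$.
\item Hence $\varphi'_*$ is an isomorphism on $H_\bullet(\mathcal{K}',f(A))$, and by regularity of $f$ also on $H_\bullet(\mathcal{K}',f(\mathcal{K}))$.
\item Apply the two-out-of-three lemma to the triple $f(A)\subseteq f(\mathcal{K})\subseteq\mathcal{K}'$ (Lemma~\ref{le-26.9.9.1}: the map of short exact sequences of quotient complexes gives a short exact sequence of mapping cones, two of which are acyclic). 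This yields an isomorphism on $H_\bullet(f(\mathcal{K}),f(A))$.
\item Injectivity of $f$ identifies this with $\varphi(\mathbf{r},\mathbf{s})_*\colon H_\bullet(\mathcal{K}(\mathbf{r}),A(\mathbf{r}))\to H_\bullet(\mathcal{K}(\mathbf{s}),A(\mathbf{s}))$.
\end{enumerate}
That identification is what injectivity is for; it is not about preventing degenerate simplices.

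A lesser point concerns the horizontal inclusions of the first diagram. You describe the absolute critical points as detected by non-vanishing of relative homology, and argue through the long exact sequence of a pair and ``vanishing forces vanishing''. That is not how Definitions~\ref{def-26.9.14.s5.1} and \ref{def-26.9.14.22} read. Each regularity condition demands that the induced maps on (relative) homology be isomorphisms for every test object in a class, and the classes grow from left to right. The inclusions are therefore immediate:
\begin{itemize}
\item Taking $A(\mathbf{r})=\emptyset$ gives ${\rm Cr}\subseteq{\rm GACr}$.
\item A subcomplex is a sub-hypergraph whose relative embedded homology is ordinary relative homology, which gives ${\rm GACr}\subseteq{\rm CACr}$.
\item A sub-hypergraph $\mathcal{H}$ supplies the sub-chain complex ${\rm Inf}(\mathcal{H})$ with $H_\bullet(\mathcal{K},\mathcal{H})=H_\bullet(C_\bullet(\mathcal{K})/{\rm Inf}(\mathcal{H}))$, which gives ${\rm CACr}\subseteq{\rm AACr}$.
\end{itemize}
Your universal-coefficient treatment of the $\mathbb{F}$-versus-$\mathbb{Z}$ arrows via mapping cones of free complexes is fine and matches the paper.
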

 
  The   
     (persistence)  morphisms  between   (persistence)    hyperdigraphs  and   
       the   (persistence)  directed  
     simplicial  maps  between   (persistence)    directed  simplicial  complexes are  
 injective  on  the  vertices   by  definition.    
 However,  since  (persistence)    morphisms  between  (persistence)    hypergraphs  
 and    (persistence)    simplicial  maps  between   (persistence)   simplicial  complexes 
 are  not  injective  on  the  vertices     in  general,  these   (persistence)    morphisms   and 
 (persistence)    simplicial  maps  are  assumed  to  be  injective
 in   Theorem~\ref{main-th-2}  and  Theorem~\ref{main-th-1}.

 The hypothesis   of       topological  embeddings   of      spaces 
     in  $\mathbb{R}$  was  considered  in   \cite[Theorem~1.3]{jgp} 
     and   was   further   discussed  in  \cite[Part  III]{pmlr}.   
 In  Theorem~\ref{main-th-2}  and  Theorem~\ref{main-th-1},   
 the hypothesis   of  the persistence  topological  embeddings  
 of    persistence   spaces   in  $\mathbb{R}$   
  is a  persistence  version  of  the  hypothesis  in    
   \cite[Theorem~1.3]{jgp}   and   \cite[Part  III]{pmlr}.

 As  topological and  algebraic  foundations 
   for  Theorem~\ref{main-th-2}  and  Theorem~\ref{main-th-1},
 we study  the  homological  critical  points  for  
 persistence  topological  spaces  in  Section~\ref{sec-2}
 and  study  the    homological  critical  points  for  
 persistence  chain  complexes  in  Section~\ref{sec-3}.  
 We  prove  analogs  of Theorem~\ref{main-th-2}  and  Theorem~\ref{main-th-1}  for 
 persistence   topological  spaces  and  persistence  maps  in  Section~\ref{sec-2}
 and  for  persistence   chain  complexes  and  persistence  chain  maps  in  Section~\ref{sec-3}.  
  In  Section~\ref{sec-2}, 
  we  also  study  the  homological  critical  points  of    parametric  configuration  spaces
  as   examples  of  persistence  topological  spaces, 
   which  gives   a    preparation  for   the  examples 
  of  persistence  hypergraphs and  persistence  simplicial  complexes  in
  Section~\ref{sec-6}.

 The  main  part  of  this  paper  consists  of  Section~\ref{sec-4}   and  Section~\ref{sec-5}. 
 In  Section~\ref{sec-4}, 
 we        give the  explicit  definitions  
 for  homological critical  points of  persistence  hyper(di)graphs 
 and  persistence morphisms  between  persistence  hyper(di)graphs.  
 Then  we     prove  Theorem~\ref{main-th-2}.   
  In  Section~\ref{sec-5}, 
 we       give the  explicit  definitions  
 for  homological critical  points of  persistence  (directed)  simplicial  complexes   
 and  persistence (directed)  simplicial  maps  between
   persistence   (directed)  simplicial  complexes.  
 Then  we      prove  Theorem~\ref{main-th-1}.

 As  examples  for  Theorem~\ref{main-th-2}  and  Theorem~\ref{main-th-1}, 
 we  study  the  space  of  parametric  packings   and  the  space  of  parametric  coverings
 of  extended  metric  spaces,  
 which  are  respectively  a  persistence  simplicial  complex  and  a  persistence  independence  
 hypergraph  (cf.  \cite{cam23,comalg}),  in  Section~\ref{sec-6}.  
 The    space  of  parametric  packings  is  given by  the  union  of  parametric  configuration  spaces
 while  the  space  of  parametric coverings   is  given  by the  union  of
  certain parametric    graded  subspaces
 of  the  configuration  spaces.  
         Therefore,
          the  homological  critical  points  in  these  examples  in  Section~\ref{sec-6}
          are  intimately  related  to the  homological  critical  points  of    parametric  configuration  spaces.  
   In  particular,  if  the  extended  metric  spaces   are  graphs with their  extended  geodesic  distances,  
   then   the  persistence  simplicial  complexes   of  parametric  packings   are  the  parametric  independence  complexes   
   (i.e.  the    simplices are   the  independence  sets)  of  the  distance  powers 
   and  the  persistence  independence  hypergraphs  of  parametric  coverings 
   are  the  parametric  dominating  hypergraphs  
   (i.e.  the  hyperedges are  the  dominating  sets)  of  the  distance  powers.

   \subsection{Prospects  for  applications}
  
The  persistence  diagrams  and the  homological critical  points  
characterize  partially  overlapping  but  not  coincident  aspects  of  the  persistent  homology.  
The  persistence  diagrams  characterize  the  change  of  the  generators  as  well  as  their  multiplicities  
for  the  persistent  homology  with  coefficients  from  a  field,  while  
the  homological  critical  points  characterize  the  change  of  the  general/integral   homology groups   
without  considering  any  multiplicities.  
 The  stability  of   persistence  diagrams  is  proved  by  
 D.  Cohen-Steiner,   H.  Edelsbrunner  and  J.   Harer  \cite{pd1} 
 under the  bottle-neck  distance.    
 The  stability  of  the  persistence diagrams  for 
 the  persistence  embedded  homology  of   persistence  hypergraphs 
 as  well  as  of   persistence  morphisms  between  persistence  hypergraphs  
 is  proved  by  J.  Wu  and  the  present  author  \cite{jhrs}.
  The   stabilities  in  \cite{pd1,jhrs}  give  foundations  for  the  applications  of  persistent
  (embedded)  homology 
   in  
  various  scenarios  in  topological  data  analysis.  
  Compared  with   \cite{pd1,jhrs},  
      it  is  reasonable  to   expect  that  the  homological  critical  points 
   for  persistence  hypergraphs  and  persistence  simplicial  complexes  
   investigated  in  this  paper  
     could  
   have  stabilities  
   in  some  sense,  which  would  give  foundations  for  the  usage of    the   homological  critical  points
   as    diffusion-stable  measurements   for  the   persistent  
   (embedded)   homology  with  general/integral  coefficients.

   Although  the  homology  could  be  represented  
   by  the  canonical  generators  according  to  
   the  Decomposition  Theorem  for  finitely-generated  abelian  groups, 
    the   non-abelian  topological  invariants,  such as  the  fundamental  group(oid)s,  
   may  not have  canonical  decompositions.  
 Therefore,  the  persistence  diagrams  in  persistent  homology  may   not 
 be   transferable  
 to  characterize   the  persistence  fundamental  group(oid)s   
 and other  non-abelian  topological  invariants  of  persistence  
 topological  objects.   
 Nevertheless, 
  similar  with  the  definitions   of  homological  critical  points  in  this  paper,  
  the  critical  points  of  the  persistence  fundamental  group(oid)s   
  (cf.  \cite[Section~6]{ren-2026-b})  
 and other  non-abelian  topological  invariants  of  persistence  
 topological  objects  can  still   be  defined. 
 Hopefully,  we  expect  that  the  
  critical  points  of  the  persistence  fundamental  group(oid)s   
 and other  non-abelian  topological  invariants  of  persistence  
 topological  objects  
 would   give   extra  information  beyond  the  persistence  diagrams of  
 the  persistence  homology.

   In  particular,  applying  the   critical  points  of  the  persistence  fundamental  group(oid)s 
  to  the  parametric  configuration  spaces  in  Subsection~\ref{ss2.3}  as  well  
  as   
  the  spaces  of   parametric  packings    in  Section~\ref{sec-6},  
   the  persistence  braid  group(oid)s   
   as  well  as  their 
   persistence   simplicial structures 
   are  expected   to  be   investigated.  
   Moreover,  applying  the   critical  points  of  the  persistence  fundamental  group(oid)s
  to  the  spaces  of   parametric  coverings    in  Subsection~\ref{ss2.3}    and   Section~\ref{sec-6},  
   the  persistence   non-abelian  groups  
   as  well  as  their 
   persistence   hypergraphic  structures    
    (but  not   the      persistence   simplicial   structures
    since  the  spaces  of  coverings  are  not  simplicial)   
   are  expected   to  be   investigated.

\section{Persistent  homology  of  topological  spaces  and  critical  points}\label{sec-2}

In  this  section,  we   define  the  (absolute)  homotopic  critical  points,   
the  (absolute)  homological  critical  points  and  
the   homeomorphic   critical  points  for   persistence  topological  spaces.  
We  define  the  homological  critical  points   for   persistence  maps  
between  persistence  topological  spaces.   
 We  prove   a  commutative  diagram   of  canonical  inclusions   between  the 
sets  of  the   critical  points   for  persistence  topological  spaces in  Proposition~\ref{pr-26.9.8.a1}.   
 We  prove   a  commutative  diagram   of  canonical  inclusions   between  the 
sets  of  the   critical  points  for  persistence  topological  spaces 
and  persistence  maps  in  Proposition~\ref{th-26.9.a9.12}.  
As  examples,  we  consider  the  
 (absolute)  homotopic  critical  points,   
the  (absolute)  homological  critical  points and  
the   homeomorphic   critical  points  for  the     
parametric  configuration  spaces,   in   Example~\ref{ex-26.9.13.1}.  
  We  also  consider  the  homological  critical  points   for   persistence  maps 
between  parametric  configuration  spaces  induced  by  bi-Lipschitz  maps  between  
the  underlying   spaces,  in  Example~\ref{ex-26.9.15.x1}.

Let  $[-\infty,+\infty]^n$  be  the  space  of   all  the  vectors 
$\textbf{t}=(t_1,\cdots,t_n),  -\infty \leq  t_1,\ldots,t_n\leq  +\infty$.  
Let   $\Omega$  be  a   subset  of   $[-\infty,+\infty]^n$.   
For  any  $\textbf{s}=(s_1,\cdots,  s_n)$  and  $ \textbf{t}=(t_1,\cdots,t_n)$  in  $\Omega$,   
we  write  $\textbf{s}\leq  \textbf{t}$  if  and  only  if  
$   s_i\leq  t_i  $  for all  $ i=1,\ldots,n$.

\subsection{Homological  critical  points  for  persistence  spaces} 

Let   $X(-)=\{X(\textbf{t})\mid  \textbf{t}\in \Omega\} $  be  a  family  of  
   triangulable  topological  spaces. 
Suppose  there  is  a  family  of     maps  
$\varphi(-,-)=\{\varphi(\textbf{s},\textbf{t}):  X(\textbf{s})\longrightarrow  X(\textbf{t})\mid 
\textbf{s},  \textbf{t}\in\Omega, \textbf{s}\leq  \textbf{t}\}$  
such  that  $\varphi(\textbf{t},\textbf{t})$   is  the  identity  map  of  $X(\textbf{t})$  
for  any  $\textbf{t}\in\Omega$  
and  $\varphi(\textbf{s},\textbf{t})\circ  \varphi(\textbf{r},\textbf{s})=\varphi(\textbf{r},\textbf{t})$  for  any  
   $ \textbf{r},  \textbf{s},  \textbf{t}\in\Omega$   with  $\textbf{r}\leq   \textbf{s}\leq \textbf{t}$.  
Then     $X(-)$  and  $\varphi(-,-)$  give  a     {\it   (multi-)persistence  space} 
with  an  $n$-dimensional  parameter.   By  applying  the  homology  functor,    
 we  obtain  a  family  of  homology groups 
\begin{eqnarray}\label{eq-26.9.9.1}
H_\bullet(X(-))=\{H_\bullet(X(\textbf{t}))\mid   \textbf{t}\in\Omega\} 
\end{eqnarray}
together  with  a  family  of  homomorphisms 
\begin{eqnarray}\label{eq-26.9.9.ph2}
\varphi(-,-)_*=\{\varphi(\textbf{s},\textbf{t})_*:  H_\bullet(X(\textbf{s}))\longrightarrow  H_\bullet(X(\textbf{t}))
\mid \textbf{s}, \textbf{t}\in\Omega,    \textbf{s}\leq\textbf{t}  \} 
\end{eqnarray}
such  that   $ \varphi(\textbf{t},\textbf{t})_* $ 
is  the  identity  map  of  $H_\bullet(X(\textbf{t}))$  for  any  $\textbf{t}\in\Omega$  
and  $ \varphi(\textbf{s},\textbf{t})_*\circ   \varphi (\textbf{r},\textbf{s})_*=\varphi(\textbf{r},\textbf{t})_*$  for  any  
 $ \textbf{r},  \textbf{s},  \textbf{t}\in\Omega$   with   $\textbf{r}\leq  \textbf{s}\leq \textbf{t}$.  
  The  {\it  (multi-)persistent  homology}
 of  $X(-)$  is  given  by  (\ref{eq-26.9.9.1})  and  (\ref{eq-26.9.9.ph2}).

\begin{definition}\label{def-26.9.8.1}
For  any   $\mathbf{t}\in\Omega$,   
\begin{enumerate}[(1)]
\item  
we  call  $\mathbf{t}$     a  {\it  homotopic    (resp.  homeomorphic)   regular   point}  of   $X(-)$      
if  there  exists  an  open  neighborhood $U$  of  $\mathbf{t}$  in  $\Omega$  such  that 
$ \varphi(\mathbf{r}, \mathbf{s})$  is  a     homotopy  equivalence  (resp.  a  homeomorphism)     for  any  
$\mathbf{r},\mathbf{s}\in  U$  with  $\mathbf{r}\leq  \mathbf{s}$.   We  call  $\mathbf{t}$    a  {\it   homotopic   (resp.  homeomorphic)   critical  point} 
 of   $X(-)$    
if  $\mathbf{t}$  is  not  a  homotopic   (resp.  homeomorphic)  regular  point;

 \item  
we  call  $\mathbf{t}$     an   {\it  absolute  homotopic    (resp.  absolute  homeomorphic)   regular   point}  of   $X(-)$    
if  there  exists  an  open  neighborhood $U$  of  $\mathbf{t}$  in  $\Omega$  such  that 
 for  any  
$\mathbf{r},\mathbf{s}\in  U$  with  $\mathbf{r}\leq  \mathbf{s}$  and  any  
triangulable  subspace     $ A(\mathbf{r}) $  of  $X(\mathbf{r})$,  
it  satisfies   that 
\begin{eqnarray}\label{eq-26.9.8.iso3}
 \varphi(r,s):  (X(\mathbf{r}), A(\mathbf{r}) )\longrightarrow  (X( \mathbf{s}), A( \mathbf{s}) )
 \end{eqnarray}
   is  a   homotopy  equivalence  (resp.  a   homeomorphism)     of  pairs,  
where    $ A( \mathbf{s}) =\varphi( \mathbf{r}, \mathbf{s})( A( \mathbf{r}) ) $.   
We  call  $ \mathbf{t}$    an   {\it   absolute  homotopic   (resp.  absolute  homeomorphic)   critical  point}  
of   $X(-)$     
if  $ \mathbf{t}$  is  not  an  absolute  homotopic  (resp.  absolute  homeomorphic)  regular  point;

\item
we  call  $ \mathbf{t}$  a   {\it  homological  regular  point}  of  $  X(-) $      
if  there  exists  an  open  neighborhood $U$  of  $\mathbf{t}$  in  $\Omega$  such  that 
$ \varphi(\mathbf{r}, \mathbf{s})_*$  is  an  isomorphism  of  homology  groups    for  any  
$\mathbf{r},\mathbf{s}\in  U$  with  $\mathbf{r}\leq  \mathbf{s}$.   We  call  $\mathbf{t}$    a  {\it   homological   critical  point} 
 of   $ X(-) $    
if  $\mathbf{t}$  is  not  a  homological   regular  point;  

\item
  we  call  $\mathbf{t}$  an     {\it   absolute  homological  regular  point}  of  $ X(-) $
if  there  exists  an  open  neighborhood $U$  of  $\mathbf{t}$  in  $\Omega$  such  that 
 for  any  
$\mathbf{r},\mathbf{s}\in  U$  with   $\mathbf{r}\leq  \mathbf{s}$  and  any
 triangulable   subspace     $ A(\mathbf{r}) $  of  $X(\mathbf{r})$,  
it  satisfies   that 
\begin{eqnarray}\label{eq-26.9.8.iso99}
 \varphi(\mathbf{r},\mathbf{s})_*:  H_\bullet (X(\mathbf{r}), A(\mathbf{r}) )
 \longrightarrow  H_\bullet(X(\mathbf{s}), A(\mathbf{s}) )
 \end{eqnarray}
 is  an  isomorphism  of  relative  homology  groups.  
  We  call  $\mathbf{t}$    an  {\it    absolute  homological   critical  point} 
 of   $ X(-) $    
if  $\mathbf{t}$  is  not  an   absolute   homological   regular  point. 
\end{enumerate}
\end{definition}

We  use the  following  notations:     
\begin{enumerate}[(a)]

\item
Let  ${\rm    Cr}^{\simeq}(X(-))$  and  
${\rm  A  Cr}^{\simeq}(X(-))$   respectively 
  be  the  set   of  all  the  homotopic   critical  points  and  
  the  set   of  all  the  absolute  homotopic   critical  points  of  
$X(-)$;  

\item
Let  ${\rm    Cr}^{\cong}(X(-))$  and  
${\rm  A  Cr}^{\cong}(X(-))$   respectively 
  be  the  set   of  all  the  homeomorphic   critical  points  and  
  the  set   of  all  the  absolute  homeomorphic   critical  points  of  
$X(-)$;
 
\item 
Let   ${\rm     Cr}^H(X(-))$  and  ${\rm  A   Cr}^H(X(-))$ 
respectively   be  the  set  of  all  the  homological  critical  points  and  
the  set  of  all  the  absolute  homological  critical  points
of   $X(-)$;  
 
\item
For  any  field  $\mathbb{F}$,  
Let   ${\rm     Cr}^{H\mathbb{F}}(X(-))$  and  ${\rm  A   Cr}^{H\mathbb{F}}(X(-))$ 
respectively   be  the  set  of  all  the  homological  critical  points  and  
the  set  of  all  the  absolute  homological  critical  points
of   $X(-)$ where  the  coefficients of  all  the  homology  groups are from  $\mathbb{F}$.  
\end{enumerate}

\begin{proposition}\label{pr-26.9.8.a1}
Let  $X(-)$  be  a  persistence  topological  space.  
Then  we  have a  commutative  diagram
\begin{eqnarray}\label{eq-26.9.9.diag1}
\xymatrix{ 
{\rm   Cr}^{H\mathbb{F}} (X(-))\ar[r] \ar[d] 
&{\rm   Cr}^{H} (X(-))\ar[r] \ar[d] 
&
{\rm   Cr}^{\simeq} (X(-)) \ar[r] \ar[d]
&
{\rm   Cr}^{\cong} (X(-)) \ar@{=}[d]\\
{\rm   ACr}^{H\mathbb{F}} (X(-))\ar[r]
&{\rm   ACr}^{H} (X(-))\ar[r]  
&
{\rm   ACr}^{\simeq} (X(-)) \ar [r] 
&
{\rm   ACr}^{\cong} (X(-)) 
}
\end{eqnarray}
where  all  the  arrows  are  inclusions.  
Moreover,
\begin{enumerate}[(1)]
\item
if   for  each  $\mathbf{t}\in\Omega$,  
 the  torsion  part   of  $H_\bullet(X(\mathbf{t}))$  does  not  contain  $\mathbb{Z}/p^k$  for  any prime  
$p$   and  any  $k\geq  2$,   
then   
\begin{eqnarray}\label{eq-26.9.9.crit-1}
{\rm    Cr}^H(X(-))={\rm    Cr}^{H \mathbb{Q}}(X(-))\cup  \Big(
\cup_{p{\rm  ~is~a~prime}}  {\rm    Cr}^{H\mathbb{Z}/p }(X(-) )\Big); 
\end{eqnarray}
\item
if   
   $X(\mathbf{t})$  is  a    path-connected  and  simply-connected   CW-complex
    for  each  $\mathbf{t}\in\Omega$,    
then    
\begin{eqnarray}\label{eq-26.9.9.crit-2}
{\rm    Cr}^{H}(X(-))  =  {\rm    Cr}^{\simeq}(X(- )). 
\end{eqnarray}
\end{enumerate} 
\end{proposition}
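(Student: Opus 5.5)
The approach is to translate every inclusion of critical sets into the reverse inclusion of regular sets. If every regular point of one kind is a regular point of another kind, then the critical sets are included the other way, and the same neighborhood $U$ of $\mathbf{t}$ serves as witness. Commutativity of (\ref{eq-26.9.9.diag1}) is then automatic, since all arrows are inclusions of subsets of $\Omega$.

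\textbf{Horizontal arrows.}
For ${\rm Cr}^{H\mathbb{F}}\subseteq {\rm Cr}^{H}$, suppose $\varphi(\mathbf{r},\mathbf{s})_*$ is an isomorphism on integral homology for all $\mathbf{r}\leq\mathbf{s}$ in $U$. The natural short exact sequences of the universal coefficient theorem give
\begin{eqnarray*}
0\to H_n(-)\otimes\mathbb{F}\to H_n(-;\mathbb{F})\to {\rm Tor}(H_{n-1}(-),\mathbb{F})\to 0 .
\end{eqnarray*}
Applying the five lemma to these sequences shows that $\varphi(\mathbf{r},\mathbf{s})_*$ is also an isomorphism with coefficients in $\mathbb{F}$. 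The same argument, applied to the relative universal coefficient sequences for pairs $(X(\mathbf{r}),A(\mathbf{r}))$, gives ${\rm ACr}^{H\mathbb{F}}\subseteq{\rm ACr}^{H}$.

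For ${\rm Cr}^H\subseteq{\rm Cr}^{\simeq}$ and ${\rm ACr}^H\subseteq{\rm ACr}^{\simeq}$, I use that a homotopy equivalence, or a homotopy equivalence of pairs, induces isomorphisms on absolute, respectively relative, homology. For ${\rm Cr}^{\simeq}\subseteq{\rm Cr}^{\cong}$ and ${\rm ACr}^{\simeq}\subseteq{\rm ACr}^{\cong}$, I use that a homeomorphism, respectively a homeomorphism of pairs, is a homotopy equivalence.

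\textbf{Vertical arrows.}
Each follows by taking $A(\mathbf{r})=\emptyset$: absolute regularity then specializes to ordinary regularity, since $H_\bullet(X,\emptyset)=H_\bullet(X)$.

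\textbf{The equality ${\rm Cr}^{\cong}={\rm ACr}^{\cong}$.}
The inclusion ${\rm Cr}^{\cong}\subseteq{\rm ACr}^{\cong}$ is the vertical arrow above. For the reverse, suppose $\varphi(\mathbf{r},\mathbf{s})$ is a homeomorphism. Then for any triangulable $A(\mathbf{r})$, the map restricts to a homeomorphism $A(\mathbf{r})\to\varphi(\mathbf{r},\mathbf{s})(A(\mathbf{r}))=A(\mathbf{s})$. Hence it is a homeomorphism of pairs, so every homeomorphic regular point is absolutely regular.

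\textbf{Statement (1).}
The inclusion $\supseteq$ is the first horizontal arrow, applied with $\mathbb{F}=\mathbb{Q}$ and $\mathbb{F}=\mathbb{Z}/p$. For $\subseteq$, take $\mathbf{t}$ that is regular for $\mathbb{Q}$ and for every $\mathbb{Z}/p$, and fix $\mathbf{r}\leq\mathbf{s}$ in a common neighborhood. Let $C$ be the algebraic mapping cone of the chain map of $\varphi(\mathbf{r},\mathbf{s})$ on simplicial chains. Its homology satisfies $H_\bullet(C;\mathbb{Q})=0$ and $H_\bullet(C;\mathbb{Z}/p)=0$ for all $p$.

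By the universal coefficient theorem, $H_\bullet(C)\otimes\mathbb{Z}/p=0$ and ${\rm Tor}(H_\bullet(C),\mathbb{Z}/p)=0$. So $H_\bullet(C)$ has no $p$-torsion and is $p$-divisible for every $p$, while $H_\bullet(C)\otimes\mathbb{Q}=0$ makes it torsion. Hence $H_\bullet(C)=0$, and $\varphi(\mathbf{r},\mathbf{s})_*$ is an integral isomorphism. The hypothesis excluding $\mathbb{Z}/p^k$ with $k\geq 2$ is where the paper's intended argument presumably enters, by comparing ranks and $p$-ranks directly. I would keep it available as a fallback for comparing the torsion of source and target.

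The main obstacle is uniformity of the neighborhood: regularity for each of infinitely many primes provides its own $U_p$, and $\bigcap_p U_p$ need not be open. I would first try to reduce to finitely many primes. The idea is that near $\mathbf{t}$ only the primes appearing in the torsion of $H_\bullet(X(\mathbf{t}))$, or of the nearby homology, can obstruct. This would use finite generation of homology for the triangulable spaces, which are compact if the triangulations are finite. Failing that, I would read (\ref{eq-26.9.9.crit-1}) with "regular" requiring a neighborhood valid for all coefficients simultaneously.

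\textbf{Statement (2).}
The inclusion ${\rm Cr}^H\subseteq{\rm Cr}^{\simeq}$ is already proved. Conversely, on a neighborhood where every $\varphi(\mathbf{r},\mathbf{s})_*$ is an integral homology isomorphism between simply-connected CW complexes, the homology Whitehead theorem makes each $\varphi(\mathbf{r},\mathbf{s})$ a homotopy equivalence. If needed, one first replaces the map by a homotopic cellular map; this is harmless because the conclusion concerns homotopy equivalence only.
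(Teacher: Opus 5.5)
Outside statement (1), your proof is the paper's. The critical-set inclusions come from the reverse implications between regular points, and the vertical arrows from $A(\mathbf{r})=\emptyset$. The equality ${\rm Cr}^{\cong}={\rm ACr}^{\cong}$ comes from restricting a homeomorphism, and (2) from the homology Whitehead theorem.

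In (1) your algebra is sound and sharper than the paper's. Your mapping-cone argument should be run on singular chains, since $\varphi(\mathbf{r},\mathbf{s})$ need not be simplicial. It shows that a map which is a homology isomorphism over $\mathbb{Q}$ and over every $\mathbb{Z}/p$ is one over $\mathbb{Z}$. So the hypothesis excluding $\mathbb{Z}/p^k$, $k\geq 2$, is never needed. The paper instead applies the universal coefficient theorem, together with that hypothesis, directly to $\varphi(\mathbf{r},\mathbf{s})_*$.

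The gap is the uniformity issue you flag, and it cannot be closed. Your reduction to finitely many primes fails, and (1) is false as literally stated. Here is a counterexample.
\begin{itemize}
\item[] Let $p_1<p_2<\cdots$ be the primes and $\Omega=[0,1]$.
\item[] Let $X(0)$ be a point, and $X(\mathbf{t})=M_k$ for $\mathbf{t}\in(\frac{1}{k+1},\frac{1}{k}]$. Here $M_k$ is a finite simplicial complex with $\tilde H_\bullet(M_k)=\mathbb{Z}/p_k$, concentrated in degree $1$.
\item[] Let $\varphi(\mathbf{s},\mathbf{t})$ be the identity when $\mathbf{s},\mathbf{t}$ lie in the same piece, and the constant map to a base point otherwise.
\end{itemize}
This is a persistence space, and every torsion part is $0$ or $\mathbb{Z}/p_k$, so the hypothesis of (1) holds. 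Now check the three kinds of regularity at $0$.
\begin{itemize}
\item[] All spaces are rationally acyclic, so $0\notin{\rm Cr}^{H\mathbb{Q}}$.
\item[] For $p=p_k$, every space over the open neighborhood $[0,\frac{1}{k+1})$ has the mod-$p$ homology of a point. Hence $0\notin{\rm Cr}^{H\mathbb{Z}/p}$ for every prime $p$.
\item[] Yet $\varphi(0,\mathbf{t})_*$ is not an isomorphism on $H_1(-;\mathbb{Z})$ for any $\mathbf{t}>0$, so $0\in{\rm Cr}^{H}$.
\end{itemize}
Every $X(\mathbf{t})$ is a finite complex and $H_\bullet(X(0))$ is torsion-free. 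Still, infinitely many primes appear arbitrarily close to $0$, so no finite-generation argument can produce a common neighborhood.

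The paper's proof has the same hole: it silently takes one neighborhood $U$ for $\mathbb{Q}$ and all $\mathbb{Z}/p$ at once. Only your fallback reading makes (1) true: a single neighborhood must witness regularity for $\mathbb{Q}$ and every $\mathbb{Z}/p$ simultaneously. Under that reading your argument proves (1) completely, and without the $\mathbb{Z}/p^k$ hypothesis.
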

\begin{proof}
Let   $\mathbf{t}\in\Omega$.  
It  follows  from   Definition~\ref{def-26.9.8.1}  directly  that 
if    $\mathbf{t}$  is  an  absolute   homotopic  (resp.  absolute  homeomorphic,  
absolute  homological)  
 regular  point,  then  $\mathbf{t}$  is  a       homotopic  (resp.  homeomorphic,  homological)  
 regular  point. 
 This  implies that  all  the vertical  maps  in  (\ref{eq-26.9.9.diag1})  are   inclusions.   
 Let  $\varphi(\mathbf{r}, \mathbf{s}):  X(\mathbf{r})\longrightarrow X(\mathbf{s})$  be  a
 homeomorphism.  
 Then  for  any  subspace  $ A(\mathbf{r})$  of  $ X(\mathbf{r})$,  
 we  have a  homeomorphism  $\varphi(\mathbf{r}, \mathbf{s}):   A(\mathbf{r})\longrightarrow 
  A(\mathbf{s})$  with  $ A(\mathbf{s})=\varphi(\mathbf{r}, \mathbf{s})( A(\mathbf{r}))$.  
  Hence  
  if    $\mathbf{t}$  is  a     homeomorphic  
 regular  point,  then  $\mathbf{t}$  is  an  absolute       homeomorphic  
 regular  point.
  Therefore,   the  last   vertical  inclusion  in  (\ref{eq-26.9.9.diag1})  is  a  bijection.

 Since  any  homeomorphism  is  a  homotopy  equivalence, 
 we  have  that
 if  $\mathbf{t}$  is an  (absolute)  homeomorphic  regular  point, 
then  $\mathbf{t}$  is  an  (absolute) homotopic  regular  point. 
  This  implies 
    the  last  horizontal inclusions  in  the   two  rows  of   (\ref{eq-26.9.9.diag1}).  
 Since any  homotopy  equivalence induces  an  isomorphism  of  homology 
 and  any  homotopy  equivalence of  pairs  induces  an  isomorphism  of  relative homology,
 we  have  that
 if  $\mathbf{t}$  is an  (absolute)  homotopic  regular  point, 
then  $\mathbf{t}$  is  an  (absolute) homological  regular  point.
This  implies 
    the    second horizontal  inclusions  in  the   two  rows  of   (\ref{eq-26.9.9.diag1}).  
    By  the  Universal   Coefficient  Theorem,      
if  (\ref{eq-26.9.8.iso3})  induces  
an  isomorphism  of  (relative) homology  groups  in  all  dimensions, 
then  it  induces  an  isomorphism  of  (relative) 
 homology  groups  with  coefficients  from  $\mathbb{F}$
 in  all  dimensions. 
 This  implies 
   the    first  horizontal  inclusions  in  the   two  rows  of   (\ref{eq-26.9.9.diag1}).

(1)  By  the  first  inclusion  in  the  first  row  of    (\ref{eq-26.9.9.diag1}),
  the  righthand-side  of  (\ref{eq-26.9.9.crit-1})  is  a  subset  of  the  lefthand-side  
  of   (\ref{eq-26.9.9.crit-1}).    
Suppose  for  each  $\mathbf{t}\in\Omega$,   
   $H_\bullet(X(\mathbf{t}))$  is    without  $\mathbb{Z}/p^k$  for  any prime  
$p$   and  any  $k\geq  2$.   
Let  $\mathbf{t}$  be   a   homological  regular  point  of  
$H_\bullet(X(-);  \mathbb{F})$,     
where     $\mathbb{F}=   \mathbb{Q}$   or  $ \mathbb{Z}/p$  for any  prime  $p$.  
Then  there  exists  an  open  neighborhood $U$  of  $\mathbf{t}$  in  $\Omega$  such  that 
\begin{eqnarray*}
\varphi(\mathbf{r},\mathbf{s})_*:  H_\bullet (X(\mathbf{r}); \mathbb{F})
\longrightarrow   H_\bullet (X(\mathbf{s}); \mathbb{F})
\end{eqnarray*}
  is   an  isomorphism    
  for  any  
$\mathbf{r},\mathbf{s}\in  U$  with  $\mathbf{r}\leq  \mathbf{s}$  and  any  nonnegative  integer  $\bullet$.
By  the  Universal  Coefficient  Theorem  and  our  assumption,  
\begin{eqnarray*}
\varphi(\mathbf{r},\mathbf{s})_*:  H_\bullet (X(\mathbf{r}) )
\longrightarrow   H_\bullet (X(\mathbf{s}) )
\end{eqnarray*}
   is  an  isomorphism  
  for     any  nonnegative  integer  $\bullet$,
  which  implies  that  $\mathbf{t}$  is  
   a  homological  regular  point  of  
 $X(-)$.    
Therefore,  we  obtain the equality  in   (\ref{eq-26.9.9.crit-1}).

 (2)  Suppose $X(\mathbf{t})$  is  a    path-connected  and  simply-connected   CW-complex
    for  each  $\mathbf{t}\in\Omega$.   
  Let  $\mathbf{t}$  be  a   homological  regular  point  of  
$ X(-)$   
where  the  coefficients     of  the  homology 
are    from  integers.  
 Then  by    the  Hurewicz  Theorem  and  the  Whitehead  Theorem,
 $ \varphi(\mathbf{r},\mathbf{s}):  X(\mathbf{r} )\longrightarrow  X(\mathbf{s})$  
 is  a  homotopy  equivalence    for  any  
$\mathbf{r},\mathbf{s}\in  U$  with  $\mathbf{r}\leq \mathbf{s}  $.  
Thus  $\mathbf{t}$    is  a    homotopic   regular   point   of   $X(-)$.  
Therefore,  with  the  help  of  the  second  inclusion  
in  the  first  row   of  (\ref{pr-26.9.8.a1}),  
 we  obtain  (\ref{eq-26.9.9.crit-2}).   
\end{proof}

\subsection{Homological  critical  points  for  persistence  maps}

  Let   $X(-)=\{X(\mathbf{t})\mid   \mathbf{t}\in\Omega\} $  
and  $X'(-)=\{X'(\mathbf{t})\mid    \mathbf{t}\in\Omega\} $    be     two  persistence  
topological  spaces 
 equipped  with  the  families  of  maps  
$\varphi(-,-)=\{\varphi(\mathbf{s},\mathbf{t}):  X(\mathbf{s})\longrightarrow  X(\mathbf{t})\mid     
  \mathbf{s}\leq \mathbf{t} \}$  
and  $\varphi'(-,-)=\{\varphi'(\mathbf{s},\mathbf{t}):  X'(\mathbf{s})\longrightarrow  X'(\mathbf{t})\mid     
  \mathbf{s}\leq \mathbf{t}\}$  respectively.  
 A   {\it  persistence    map}  $f:  X(-)\longrightarrow  X'(-)$ 
 is  a   family  of    maps  $f(\mathbf{t}):  X(\mathbf{t})\longrightarrow  X'(\mathbf{t})$ 
  for  any  $\mathbf{t}\in\Omega$
 such  that   the  diagram  commutes 
 \begin{eqnarray*}\label{eq-26.9.9.dg1}
 \xymatrix{
 X(\mathbf{s})\ar[r]^-{f(\mathbf{s})}  \ar[d]_-{\varphi(\mathbf{s},\mathbf{t})}  
 &  X'(\mathbf{s}) \ar[d]^-{\varphi'(s,\mathbf{t})}\\
 X(\mathbf{t})\ar[r]^-{f(\mathbf{\mathbf{t}})}  
  & X'(\mathbf{t}),
 }
 \end{eqnarray*}
 or  simply 
 \begin{eqnarray}\label{eq-26.9.14.1}
 \varphi'(s,\mathbf{t})\circ   f(\mathbf{s})=f(\mathbf{\mathbf{t}})\circ\varphi(\mathbf{s},\mathbf{t}),   
 \end{eqnarray}
for  any  
 $\textbf{s}\leq  \textbf{t}$. 
  Let  $f:  X(-)\longrightarrow  X'(-)$  be a  persistence  map.  
  Applying  the  homology  functor,  
  we  obtain a  {\it  persistence  homomorphism} 
  $f_*:  H_\bullet(X(-))\longrightarrow  H_\bullet(X'(-))$  between  
  the  persistent  homology  groups.  
 We  say  that  $f$  is  a  {\it  persistence  topological  embedding}  if   
 $f(\mathbf{t})$  is  a  topological embedding  for each  $\mathbf{t}\in\Omega$, 
 i.e.   $f(\mathbf{t}) $  sends    $X(\mathbf{t}) $  homeomorphically  onto its  image
 for each  $\mathbf{t}\in\Omega$.  
 
 \begin{definition}\label{def-26-9-9-rel1}
For  any  persistence  map  $f$,  
 we  say  that  $\mathbf{t}\in \Omega$  is  a  {\it  homological  regular  point}  of  $f$  
 if  there exists  an  open  neighborhood  $U$  of  $\mathbf{t}$  in  $\Omega$  
 such that  for  any  $\mathbf{r}, \mathbf{s}\in   U$  with  $\mathbf{r}\leq \mathbf{s}$,  
 the  induced  homomorphism 
 \begin{eqnarray}\label{eq-26.9.9.f1}
 \varphi'(\mathbf{r}, \mathbf{s})_*:  H_\bullet (X'(\mathbf{r}), f(\mathbf{r})(X(\mathbf{r})))
 \longrightarrow   H_\bullet (X'(\mathbf{s}), f(\mathbf{s})(X(\mathbf{s}))) 
 \end{eqnarray}
 is  an  isomorphism. 
 We  call  $\mathbf{t}$    a   {\it       homological   critical  point} 
 of   $ f $    
if  $\mathbf{t}$  is  not  a    homological   regular  point. 
 \end{definition}
 
 Let   ${\rm  Cr}^{H}(f)$  be  the  set  of  all  the homological   critical  points
 of   $ f $ where  the  coefficients  of  the  homology  in  (\ref{eq-26.9.9.f1})  are  
 taken from  integers.  
      Let   ${\rm  Cr}^{H\mathbb{F}}(f)$  be  the  set  of  all  the homological   critical  points
 of   $ f $  where  the  coefficients  of  the  homology  in  (\ref{eq-26.9.9.f1})  are  
 taken from  $\mathbb{F}$.

 \begin{proposition}\label{th-26.9.a9.12}
 Let  $f:  X(-)\longrightarrow  X'(-)$   be  a  persistence  topological  embedding.  
 Then  we  have a  commutative  diagram 
 \begin{eqnarray}\label{diag-26.9.9.19x}
 \xymatrix{
  {\rm  A  Cr}^{H\mathbb{F}}(X(-) )\ar[r]\ar[d]
  &{\rm  A  Cr}^H(X(-))\ar[r]\ar[d]
  &{\rm  A  Cr}^{\simeq} (X(-))\ar[r]\ar[d]
  &{\rm    Cr}^{\cong} (X(-)) \ar[d]
 \\
   {\rm  A  Cr}^{H\mathbb{F}}(X'(-) )\cup  {\rm  Cr}^{H\mathbb{F}} (f)\ar[r]
  &{\rm  A  Cr}^H(X'(-))\cup  {\rm  Cr}^{H} (f)\ar[r]
  &{\rm  A  Cr}^{\simeq}(X'(-))  \ar[r]
  &{\rm     Cr}^{\cong} (X'(-))
   }
 \end{eqnarray}where  all  the  maps are  inclusions.  
 \end{proposition}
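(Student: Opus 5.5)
The plan is to check each arrow of (\ref{diag-26.9.9.19x}) at the level of regular points. An inclusion of critical sets $C_1\subseteq C_2$ is the same as saying that every regular point for $C_2$ is a regular point for $C_1$. Commutativity is automatic, since all arrows are inclusions of subsets of $\Omega$.

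The top row is exactly the bottom row of (\ref{eq-26.9.9.diag1}) in Proposition~\ref{pr-26.9.8.a1} applied to $X(-)$, together with the equality ${\rm ACr}^{\cong}={\rm Cr}^{\cong}$. For the first bottom arrow I need ${\rm Cr}^{H\mathbb{F}}(f)\subseteq {\rm Cr}^{H}(f)$. This follows by the same Universal Coefficient Theorem argument used in Proposition~\ref{pr-26.9.8.a1}, now applied to the relative homology of the pairs $(X'(\mathbf{r}),f(\mathbf{r})(X(\mathbf{r})))$. It is then combined with ${\rm ACr}^{H\mathbb{F}}(X'(-))\subseteq {\rm ACr}^{H}(X'(-))$.

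For the vertical arrows, the key step is the homological one. Suppose $\mathbf{t}$ is an absolute homological regular point of $X'(-)$ and a homological regular point of $f$, and take a common open neighborhood $U$. Fix $\mathbf{r}\leq\mathbf{s}$ in $U$ and a triangulable $A(\mathbf{r})\subseteq X(\mathbf{r})$, and put $A(\mathbf{s})=\varphi(\mathbf{r},\mathbf{s})(A(\mathbf{r}))$. By (\ref{eq-26.9.14.1}),
$$\varphi'(\mathbf{r},\mathbf{s})\bigl(f(\mathbf{r})(A(\mathbf{r}))\bigr)=f(\mathbf{s})(A(\mathbf{s})).$$
I would compare the long exact sequences of the triples
$$\bigl(X'(\mathbf{r}),f(\mathbf{r})X(\mathbf{r}),f(\mathbf{r})A(\mathbf{r})\bigr)\longrightarrow\bigl(X'(\mathbf{s}),f(\mathbf{s})X(\mathbf{s}),f(\mathbf{s})A(\mathbf{s})\bigr),$$
which are linked by $\varphi'(\mathbf{r},\mathbf{s})_*$. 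On $H_\bullet(X',fA)$ this map is an isomorphism by absolute regularity of $X'(-)$. On $H_\bullet(X',fX)$ it is an isomorphism by regularity of $f$. The five lemma then gives an isomorphism on $H_\bullet(fX(\mathbf{r}),fA(\mathbf{r}))\to H_\bullet(fX(\mathbf{s}),fA(\mathbf{s}))$. Since each $f(\mathbf{t})$ is a topological embedding, this map is identified with $\varphi(\mathbf{r},\mathbf{s})_*\colon H_\bullet(X(\mathbf{r}),A(\mathbf{r}))\to H_\bullet(X(\mathbf{s}),A(\mathbf{s}))$. Hence $\mathbf{t}$ is an absolute homological regular point of $X(-)$. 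The same argument works verbatim with $\mathbb{F}$-coefficients, which gives the first two vertical arrows.

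The remaining inclusions are the main obstacle: ${\rm Cr}^{H}(f)\subseteq{\rm ACr}^{\simeq}(X'(-))$ in the bottom row, and the homotopic and homeomorphic vertical arrows. For the first, I would take $A(\mathbf{r})=f(\mathbf{r})(X(\mathbf{r}))$ in Definition~\ref{def-26.9.8.1}(2) for $X'(-)$. This yields a homotopy equivalence of pairs onto $\bigl(X'(\mathbf{s}),f(\mathbf{s})\varphi(\mathbf{r},\mathbf{s})X(\mathbf{r})\bigr)$. The difficulty is that the subspace $f(\mathbf{s})\varphi(\mathbf{r},\mathbf{s})X(\mathbf{r})$ need not be all of $f(\mathbf{s})X(\mathbf{s})$. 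I would first try to bridge this using absolute regularity a second time, with $A=\varnothing$ and with the triple $\bigl(X'(\mathbf{s}),f(\mathbf{s})X(\mathbf{s}),f(\mathbf{s})\varphi X(\mathbf{r})\bigr)$. If that fails, I would fall back on showing that $\varphi(\mathbf{r},\mathbf{s})$ is surjective near regular points.

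For the homeomorphic column, if $\varphi'(\mathbf{r},\mathbf{s})$ is a homeomorphism then (\ref{eq-26.9.14.1}) together with injectivity of $f$ makes $\varphi(\mathbf{r},\mathbf{s})$ a continuous injection. Restricting $\varphi'(\mathbf{r},\mathbf{s})^{-1}$ to images via the embeddings gives a continuous inverse, provided the same surjectivity issue is settled. The homotopic column is handled analogously by restricting the pair equivalences through $f$. I expect this surjectivity point to be the crux, and I would scrutinize it most carefully.
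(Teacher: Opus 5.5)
\textbf{There is a genuine gap, and it cannot be closed: three of the inclusions you leave open are false in general.}

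\emph{What is correct.} Your arguments are correct for the top row, for the first bottom arrow (the Universal Coefficient Theorem applied to the pairs $(X'(\mathbf{r}),f(\mathbf{r})X(\mathbf{r}))$), and for the first two vertical arrows. The five lemma on the long exact sequences of the triples $\bigl(X'(\mathbf{r}),f(\mathbf{r})X(\mathbf{r}),f(\mathbf{r})A(\mathbf{r})\bigr)$ is equivalent to the mapping-cone Lemma~\ref{le-26.9.9.1} used in the paper's Step~2. The rest of that step matches the paper. The last bottom arrow follows directly from Proposition~\ref{pr-26.9.8.a1}.

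\emph{Where the gap is.} The three remaining inclusions are ${\rm Cr}^{H}(f)\subseteq{\rm ACr}^{\simeq}(X'(-))$, ${\rm ACr}^{\simeq}(X(-))\subseteq{\rm ACr}^{\simeq}(X'(-))$ and ${\rm Cr}^{\cong}(X(-))\subseteq{\rm Cr}^{\cong}(X'(-))$. For these you give only tentative strategies, and none of them can be completed. The obstruction is exactly the surjectivity issue you isolated.

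\emph{Counterexample.} Take $\Omega=[0,2]$ and $X'(t)=\mathbb{R}$, with every $\varphi'(s,t)$ the identity. Let $X(t)=\{0\}$ for $t\leq 1$ and $X(t)=\{0,1\}$ for $t>1$, with $\varphi(s,t)$ and $f(t)$ the inclusions. Then (\ref{eq-26.9.14.1}) holds and each $f(t)$ is an embedding.
\begin{itemize}
\item Every point is an absolute homeomorphic, hence absolute homotopic, regular point of $X'(-)$, since $\varphi'(r,s)$ is the identity of pairs $(\mathbb{R},A')\to(\mathbb{R},A')$. So ${\rm ACr}^{\simeq}(X'(-))={\rm Cr}^{\cong}(X'(-))=\emptyset$.
\item The point $t=1$ lies in ${\rm Cr}^{\cong}(X(-))$ and in ${\rm ACr}^{\simeq}(X(-))$, because $\{0\}\hookrightarrow\{0,1\}$ is not a homotopy equivalence.
\item The point $t=1$ also lies in ${\rm Cr}^{H}(f)$, because $H_1(\mathbb{R},\{0\})=0$ while $H_1(\mathbb{R},\{0,1\})\cong\mathbb{Z}$.
\end{itemize}

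\emph{Why your strategies fail.} Regularity of $X'(-)$ does not force $\varphi(r,s)$ to be surjective, so your fallback fails. A second use of absolute regularity of $X'(-)$ cannot help either, since here $\varphi'$ is the identity. For the homotopic column, restricting separate pair equivalences through $f$ does not by itself yield an equivalence of $(X(\mathbf{r}),A(\mathbf{r}))$, even with surjectivity. Definition~\ref{def-26.9.8.1}~(2) provides no equivalences of triples.

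\emph{The paper's proof breaks at the same point.} In Step~1 it takes $A(-)=X(-)$ in (\ref{eq-26.9.57}). This gives a pair equivalence onto $\bigl(X'(\mathbf{s}),f(\mathbf{s})\varphi(\mathbf{r},\mathbf{s})X(\mathbf{r})\bigr)$, not onto $\bigl(X'(\mathbf{s}),f(\mathbf{s})X(\mathbf{s})\bigr)$. It also derives the bottom horizontal arrows ``from the second row of (\ref{eq-26.9.9.diag1})'', which says nothing about ${\rm Cr}^{H}(f)$.

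Only the parts you proved, plus the last bottom arrow, actually hold. Your own sketch does show that the homeomorphic column holds under the extra hypothesis that the $\varphi(\mathbf{r},\mathbf{s})$ are surjective near $\mathbf{t}$.
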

 
In  order  to  prove  Proposition~\ref{th-26.9.a9.12},  
we  first  prove  the  next  lemma.  
 
 \begin{lemma}\label{le-26.9.9.1}
 Let  $X$  and  $X'$  be  triangulable  topological  spaces.  
 Let  $A\subseteq  B\subseteq   X$  and  $A'\subseteq  B'\subseteq   X'$
 be  triangulable  subspaces.  
 Suppose  there  is  a    map  $\theta:  X\longrightarrow  X'$  such  that  
  $\theta(A)\subseteq  A'$  and  $\theta(B)\subseteq  B'$.  
  If   both  $\theta_*:  H_\bullet(X,A)\longrightarrow  H_\bullet(X',A')$   
  and  $\theta_*:  H_\bullet(X,B)\longrightarrow  H_\bullet(X',B')$ 
  are  isomorphisms  of  relative  homology  groups,  
  then  $\theta_*:  H_\bullet(B,A)\longrightarrow  H_\bullet(B',A')$
  is  an  isomorphism  of  relative  homology  groups. 
 \end{lemma}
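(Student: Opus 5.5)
The plan is to use the long exact sequence of the triple $(X,B,A)$ together with naturality and the five lemma. Since $A\subseteq B\subseteq X$ are triangulable, we have the long exact homology sequence of the triple
\begin{eqnarray*}
\cdots\longrightarrow H_{n+1}(X,A)\longrightarrow H_{n+1}(X,B)\stackrel{\partial}{\longrightarrow} H_n(B,A)\longrightarrow H_n(X,A)\longrightarrow H_n(X,B)\longrightarrow\cdots,
\end{eqnarray*}
and similarly for $(X',B',A')$. The hypotheses $\theta(A)\subseteq A'$ and $\theta(B)\subseteq B'$ make $\theta$ a map of triples. Hence $\theta_*$ induces a map between these two long exact sequences, and all squares commute, including those involving the connecting homomorphisms $\partial$. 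This naturality follows because $\theta_\sharp$ gives a morphism of short exact sequences of chain complexes
\begin{eqnarray*}
0\longrightarrow C_\bullet(B)/C_\bullet(A)\longrightarrow C_\bullet(X)/C_\bullet(A)\longrightarrow C_\bullet(X)/C_\bullet(B)\longrightarrow 0,
\end{eqnarray*}
and the long exact sequence of homology is natural with respect to such morphisms.

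Next, I would apply the five lemma at the term $H_n(B,A)$, using the five consecutive terms
\begin{eqnarray*}
H_{n+1}(X,A)\to H_{n+1}(X,B)\to H_n(B,A)\to H_n(X,A)\to H_n(X,B)
\end{eqnarray*}
and their primed counterparts. By hypothesis, the four outer vertical maps are isomorphisms: two are $\theta_*$ on $H_\bullet(X,A)$ and two are $\theta_*$ on $H_\bullet(X,B)$, each in the appropriate degree. The five lemma then gives that the middle map $\theta_*:H_n(B,A)\to H_n(B',A')$ is an isomorphism for every $n$. In degree $0$ the sequence terminates with $H_0(X,B)\to 0$, so the five lemma still applies with zero terms on the right, and nothing special is needed there.

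The only point requiring care is the commutativity of the connecting-homomorphism square, which I will justify by the chain-level naturality above rather than take for granted. The same argument works verbatim with coefficients in any field $\mathbb{F}$, which is how it will be used for the $\mathbb{F}$-versions in Proposition~\ref{th-26.9.a9.12}. I do not expect any genuine obstacle: the triangulability assumption only ensures that singular and simplicial relative homology agree, and the proof can be carried out entirely with singular chains.
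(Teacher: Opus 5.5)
Your argument is correct. It rests on the same morphism of short exact sequences $0\to C_\bullet(B)/C_\bullet(A)\to C_\bullet(X)/C_\bullet(A)\to C_\bullet(X)/C_\bullet(B)\to 0$ that the paper uses; the two proofs differ only in how the ``two-out-of-three'' conclusion is drawn.

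The paper does not use the triple sequence and the five lemma. It takes the algebraic mapping cones of the three vertical chain maps and observes that these cones form a short exact sequence of chain complexes. By hypothesis the cones of $\theta$ on $C_\bullet(X)/C_\bullet(A)$ and on $C_\bullet(X)/C_\bullet(B)$ are acyclic, so the long exact sequence of cones forces $\mathrm{Cone}(\theta_{C_\bullet(B)/C_\bullet(A)})$ to be acyclic as well. That is, $\theta$ is a quasi-isomorphism on $C_\bullet(B)/C_\bullet(A)$.

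The cone route hides the connecting-homomorphism bookkeeping inside the exactness of the cone sequence and the fact that a chain map is a quasi-isomorphism iff its cone is acyclic. Your route is the more familiar diagram chase. Both carry over verbatim to field coefficients and to the purely algebraic versions used later (quotients of chain complexes, infimum chain complexes of hyperdigraphs).

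Your choice of singular chains is a real improvement. The paper works with simplicial chains of chosen triangulations, on which a merely continuous $\theta$ does not directly induce a chain map; one would need relative simplicial approximation or a singular–simplicial comparison. With singular chains, the long exact sequence of the triple exists for arbitrary $A\subseteq B\subseteq X$, so the triangulability hypothesis is not needed at all.
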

 
 \begin{proof}
 Take   triangulations  of  $X$  and  $X'$  such that  
 the  triangulation  of  $X$  induces   triangulations  on  $A$  and  $B$  
 and  the triangulation of  $X'$  induces  triangulations on  $A'$  and  $B'$.  
 Take  the  chain  complexes  with respect to  these   triangulations.  
 Consider  the  commutative  diagram   of  chain  complexes
 \begin{eqnarray}\label{eq-26.9.9.25}
 \xymatrix{
0\ar[r]& C_\bullet(B)/C_\bullet(A)\ar[r] \ar[d]_-{\theta} 
&  C_\bullet(X)/C_\bullet(A) \ar[r] \ar[d]_-{\theta} 
& C_\bullet(X)/C_\bullet(B)\ar[d]^-{\theta}\ar[r]  &0\\
0\ar[r]& C_\bullet(B')/C_\bullet(A')\ar[r]   
&  C_\bullet(X')/C_\bullet(A') \ar[r]  
& C_\bullet(X')/C_\bullet(B') \ar[r]  &0 
 }
 \end{eqnarray}
 where  each  row  is  a  short  exact  sequence.  
 Take  the  algebraic  mapping cones  
 \begin{eqnarray*}
 {\rm  Cone}(\theta_{C_\bullet(B)/C_\bullet(A)}),
    ~~~~~~
    {\rm  Cone}(\theta_{C_\bullet(X)/C_\bullet(A)}), 
    ~~~~~~  {\rm  Cone}(\theta_{C_\bullet(X)/C_\bullet(B)})
    \end{eqnarray*}
   respectively   for  the  three  vertical  chain   maps   in  
 (\ref{eq-26.9.9.25}).  
 Then  (\ref{eq-26.9.9.25})  implies a  short  exact  sequence  of  chain  complexes
 \begin{eqnarray*}
 \xymatrix{
 0\ar[r]  &{\rm  Cone}(\theta_{C_\bullet(B)/C_\bullet(A)})\ar[r]
 &{\rm  Cone}(\theta_{C_\bullet(X)/C_\bullet(A)})  \ar[r]
 &  {\rm  Cone}(\theta_{C_\bullet(X)/C_\bullet(B)})\ar[r] &0.   
 } 
 \end{eqnarray*} 
Thus  we  have  a  long  exact sequence  of  the homology  groups 
   \begin{eqnarray}\label{eq-26.9.9.27}
 \cdots \longrightarrow     H_n({\rm  Cone}(\theta_{C_\bullet(B)/C_\bullet(A)})) \longrightarrow  
  H_n({\rm  Cone}(\theta_{C_\bullet(X)/C_\bullet(A)}) )  \longrightarrow   \\
  H_n(  {\rm  Cone}(\theta_{C_\bullet(X)/C_\bullet(B)}))  \longrightarrow   
     H_{n-1}({\rm  Cone}(\theta_{C_\bullet(B)/C_\bullet(A)}))  \longrightarrow  
  \cdots   \nonumber
 \end{eqnarray}
 On  the  other  hand,  it  follows  from   our  assumption  that  
 \begin{eqnarray}\label{eq-26.9.9.28}
 H_\bullet({\rm  Cone}(\theta_{C_\bullet(X)/C_\bullet(A)}) ) 
 =H_\bullet  ( {\rm  Cone}(\theta_{C_\bullet(X)/C_\bullet(B)}) )=0
 \end{eqnarray} 
 for  any  nonnegative  integer $\bullet$.  
 Thus  by  (\ref{eq-26.9.9.27}) and    (\ref{eq-26.9.9.28}),  
 we  obtain  $  H_\bullet({\rm  Cone}(\theta_{C_\bullet(B)/C_\bullet(A)}) )    =0$.   
Therefore,    
  $\theta_*:  H_\bullet(C_\bullet(B)/C_\bullet(A))\longrightarrow  H_\bullet(C_\bullet(B')/ C_\bullet(A'))$
  is  an  isomorphism  and  consequently    
 $\theta_*:  H_\bullet(B,A)\longrightarrow  H_\bullet(B', A')$
  is  an  isomorphism.  
 \end{proof}

 \begin{proof}[Proof of  Proposition~\ref{th-26.9.a9.12}]
 The  horizontal  inclusions  in    (\ref{diag-26.9.9.19x}) 
 are  obtained  from the  second  row  of  (\ref{eq-26.9.9.diag1}). 
 Now  we  prove  the  vertical  inclusions  in    (\ref{diag-26.9.9.19x}).

 {\it  Step~1}.  The  last  two  vertical  maps  in   (\ref{diag-26.9.9.19x})  are    inclusions.

 {\it  Proof  of  Step~1}.  
 Let  $\mathbf{t}$  be an   absolute  homotopic  (resp.     homeomorphic)
    regular   point   of   $X'(-)$.  
 Then  there  exists  an  open  neighborhood $U$  of  $\mathbf{t}$  in  $\Omega$  such  that 
 for  any  
$\mathbf{r},\mathbf{s}\in  U$  with  $\mathbf{r}\leq  \mathbf{s}$  and  any  
 subspace    $ A'(\mathbf{r}) $  of  $X'(\mathbf{r})$,
the       map  
 $\varphi'(\mathbf{r},  \mathbf{s}):  (X'(\mathbf{r}),    A'(\mathbf{r}) )\longrightarrow  (X'( \mathbf{s}),  A'( \mathbf{s}) )$  is  a     
 homotopy  equivalence  (resp.  homeomorphism)  of  pairs,  
 where  $ A'( \mathbf{s})=\varphi'(\mathbf{r},\mathbf{s}) A'( \mathbf{r})$.  
For  any 
subspace    $ A(\mathbf{r}) $  of  $X(\mathbf{r})$,  
let  $A'(\mathbf{r})= f( A(\mathbf{r}) )$.  
With  the  help  of    (\ref{eq-26.9.14.1}),  
we  have   $A'(\mathbf{s})= f( A(\mathbf{s}) )$, 
where  $A(\mathbf{s})=\varphi(\mathbf{r},\mathbf{s})  A(\mathbf{r})$.   
 Thus  
\begin{eqnarray}\label{eq-26.9.57}
\varphi'(\mathbf{r}, \mathbf{s}):  (X'(\mathbf{r}),  f( A(\mathbf{r}) )\longrightarrow  (X'(\mathbf{s}),  f( A(\mathbf{s}) ))
\end{eqnarray}
  is  a    
 homotopy  equivalence  (resp.  a homeomorphism)  of  pairs.  
 Since  $f$  is  a  persistence  topological  embedding, 
  by   letting  $A(-)$  in   (\ref{eq-26.9.57})  be  
  $X(-)$  and  $A(-)$  in   (\ref{eq-26.9.8.iso3})  respectively,   
   it follows from  (\ref{eq-26.9.57})  that    
 (\ref{eq-26.9.8.iso3})   is  a    
 homotopy  equivalence  (resp.  a  homeomorphism)  of  pairs.  
 Thus    
 $t$  is  an   absolute  homotopic  (resp.     homeomorphic)  regular   point   of   $X(-)$.
Therefore,  ${\rm  A  Cr}^{\simeq}(X(-))$  is a  subset  of  ${\rm  A  Cr}^{\simeq}(X'(-))$
and  ${\rm   Cr}^{\cong}(X(-))$  is a  subset  of  ${\rm   Cr}^{\cong}(X'(-))$.

  {\it  Step~2}.
  The  first two  vertical  maps  in   (\ref{diag-26.9.9.19x})  are  inclusions.

  {\it  Proof  of  Step~2}.  
  Let  $\mathbf{t}$  be an   absolute  homological   regular   point   of   $X'(-)$
  and  a    homological  regular  point  of  $f$  simultaneously.    
 Then  there  exists  an  open  neighborhood $U$  of  $\mathbf{t}$  in  $\Omega$  such  that 
 both of  the  followings  are  satisfied:
 \begin{enumerate}[(i)]
 \item
 for  any  
$\mathbf{r}, \mathbf{s}\in  U$  with  $\mathbf{r}\leq  \mathbf{s}$  and  any  
 subspace    $ A'(\mathbf{r}) $  of  $X'(\mathbf{r})$,
the   induced  homomorphism  
 $\varphi'(\mathbf{r},\mathbf{s})_*:  H_\bullet(X'(\mathbf{r}),  A'(\mathbf{r}) )
 \longrightarrow  H_\bullet (X'(\mathbf{s}),A'(\mathbf{s}) )$
   is   
 an  isomorphism;
 \item
  for  any  
$\mathbf{r}, \mathbf{s}\in  U$  with  $\mathbf{r}\leq  \mathbf{s}$, 
the  map
(\ref{eq-26.9.9.f1})  is  an  isomorphism. 
 \end{enumerate}  
 By  (i),   for  any 
subspace    $ A(\mathbf{r}) $  of  $X(\mathbf{r})$,
the   induced  homomorphism  
 \begin{eqnarray}\label{eq-26.9.9.79}
 \varphi'(\mathbf{r},\mathbf{s})_*:  H_\bullet(X'(\mathbf{r}),  f(A(\mathbf{r}) ))
 \longrightarrow  H_\bullet (X'(\mathbf{s}),  f(A (\mathbf{s}) )) 
 \end{eqnarray}
   is   
 an  isomorphism.      
 Let     $X=X'(\mathbf{r})$,  $X'= X'(\mathbf{s})$, 
 $B=f(X(\mathbf{r}))$,  $B'= f(X(\mathbf{s}))$,  
 $A= f(A(\mathbf{r}))$, $A'=f(A(\mathbf{s}))$  and  
 $\theta=\varphi'(\mathbf{r}, \mathbf{s})$    in   Lemma~\ref{le-26.9.9.1}.   
  With  the  help  of  (\ref{eq-26.9.9.f1}) and    (\ref{eq-26.9.9.79}),  
  it  follows  from   Lemma~\ref{le-26.9.9.1}   that  
   \begin{eqnarray}\label{eq-26.9.9.77}
 \varphi'(\mathbf{r},\mathbf{s})_*:  H_\bullet(   f(X(\mathbf{r}) ), f(A(\mathbf{r}) ))
 \longrightarrow  H_\bullet (  f(X (\mathbf{s}) ), f(A(\mathbf{s}) )) 
 \end{eqnarray}
   is   
 an  isomorphism.  
   Since  $f$  is  a  persistence  topological  embedding,  it  follows from  (\ref{eq-26.9.9.77})  that 
   (\ref{eq-26.9.8.iso99})  is  an  isomorphism.  
    Consequently, 
     $\mathbf{t}$  is an   absolute  homological   regular   point   of   $X(-)$. 
     Therefore,  
     ${\rm  A  Cr}^H(X(-))$  is  a  subset  of  
     ${\rm  A  Cr}^H(X'(-))\cup    {\rm  Cr}^{H} (f)$.    
     By  applying  the  same  argument  with  the  coefficients  of  the  homology  taken  
     from  $\mathbb{F}$,  
     we  can  prove  that  
     ${\rm  A  Cr}^{H\mathbb{F}}(X(-))$  is  a  subset  of  
     ${\rm  A  Cr}^{H\mathbb{F}}(X'(-))\cup    {\rm  Cr}^{H\mathbb{F}} (f)$.    
     \end{proof}

    \subsection{Some  examples  of  the  parametric   configuration  spaces}\label{ss2.3}

     Let  $(M,d)$  be  an  extended   metric  space
where   $d:  M\times  M\longrightarrow  [0, +\infty]$  
is  an  extended    distance  function.  
 Let  $\Omega=\{\mathbf{t}=(-t_1,t_2)\mid   0 \leq  t_1<   t_2\leq  +\infty\}$
be  a  subset  of   $[-\infty, +\infty]^2$.
     
\begin{example}
\label{ex-26.9.13.1}
  For  any  nonnegative  integer  $n$  and  any  
$\mathbf{t}\in\Omega$,  
let   
\begin{eqnarray}\label{eq-26.9.13-1}
{\rm  Conf}_{n+1}(M, \mathbf{t})=\{(x_0,x_1,\ldots, x_n)\in   M^{n+1}\mid  
2t_1<d(x_i,x_j)\leq  2t_2 {\rm~for~any~}  0\leq  i<  j \leq  n \}  
\end{eqnarray}
   be  the  $(n+1)$-th  constrained  ordered   configuration  space
   consisting  of  the  configurations  $(x_0,x_1,\ldots, x_n)$      of   distinct  $n+1$  points 
   in  $M$ 
   such  that  the  
      closed  $t_1$-balls  centered  at  $x_0,x_1,\ldots, x_n$  is a  packing  in  $M$
      and  the  closed   $t_2$-balls  centered  at  $x_0,x_1,\ldots, x_n$ 
          intersect   with  each  other.    
      Let  $\Sigma_{n+1}$  be  the  permutation  group  on  $n+1$  letters 
      acting  on   ${\rm  Conf}_{n+1}(M, \mathbf{t})$  by  permuting  the  coordinates.  
       The   orbit  space  
       \begin{eqnarray}\label{eq-26.9.13-2}
{\rm  Conf}_{n+1}(M, \mathbf{t})/\Sigma_{n+1}&=&\{\{x_0,x_1,\ldots, x_n\}\mid  
x_0,x_1,\ldots, x_n\in   M,
\\ 
&&  2t_1<d(x_i,x_j)\leq  2t_2 {\rm~for~any~}  0\leq  i<  j \leq  n \}  
\nonumber
\end{eqnarray}
is  the   $(n+1)$-th  constrained  unordered   configuration  space.  
The    $(n+1)!$-sheeted  covering  map 
\begin{eqnarray}\label{eq-26.9.13-3}
\pi:  {\rm  Conf}_{n+1}(M, \mathbf{t})\longrightarrow  
{\rm  Conf}_{n+1}(M, \mathbf{t})/\Sigma_{n+1}
\end{eqnarray}
sends  an   ordered  $(n+1)$-tuple 
$(x_0,x_1,\ldots, x_n)$  
 in  (\ref{eq-26.9.13-1})
to  its  underlying  set  $\{x_0,x_1,\ldots, x_n\}$  in   (\ref{eq-26.9.13-2}).   
For  any  $\mathbf{s}\leq  \mathbf{t}$,    
\begin{eqnarray*}
{\rm  Conf}_{n+1}(M, \mathbf{s})\subseteq  {\rm  Conf}_{n+1}(M, \mathbf{t}).
\end{eqnarray*} 
Since  ${\rm  Conf}_{n+1}(M, \mathbf{s})$  is  an  invariant  subspace  
of  ${\rm  Conf}_{n+1}(M, \mathbf{t})$  with  respect to   the  $\Sigma_{n+1}$-action,  
\begin{eqnarray*}
{\rm  Conf}_{n+1}(M, \mathbf{s})/\Sigma_{n+1} \subseteq 
 {\rm  Conf}_{n+1}(M, \mathbf{t})/\Sigma_{n+1}.
 \end{eqnarray*}   
 Consequently,  
 we  obtain     persistence  topological  spaces 
 ${\rm  Conf}_{n+1}(M, -)$  and  ${\rm  Conf}_{n+1}(M, -)/\Sigma_{n+1}$.    
 With    the  help  of  (\ref{eq-26.9.13-3}),
 we  obtain  a   persistence   (covering)  map 
 \begin{eqnarray}\label{26.9.30-1}
 \pi:  {\rm  Conf}_{n+1}(M, -)\longrightarrow  
{\rm  Conf}_{n+1}(M, -)/\Sigma_{n+1}.  
 \end{eqnarray} 
 By  Definition~\ref{def-26-9-9-rel1}, 
 \begin{eqnarray}\label{eq-26.9.19.5}
 {\rm  Cr}^{H}(\pi)={\rm  Cr}^{H\mathbb{F}}(\pi)=\emptyset. 
 \end{eqnarray}
 By  Definition~\ref{def-26.9.8.1}, 
 \begin{eqnarray*}
 {\rm  Cr}^{\cong}({\rm  Conf}_{n+1}(X,-))= {\rm  Cr}^{\cong}({\rm  Conf}_{n+1}(X,-)/\Sigma_{n+1}). 
 \end{eqnarray*}
 Suppose  ${\rm  Conf}_{n+1}(X,\mathbf{t})$  is  a   CW-complex
 for  any  $\mathbf{t}\in\Omega$.    
 Then   since  the  $\Sigma_{n+1}$-action  on    ${\rm  Conf}_{n+1}(X,\mathbf{t})$  
 is  free,   
  ${\rm  Conf}_{n+1}(X,\mathbf{t})/\Sigma_{n+1}$  
  is  a  
     CW-complex  as  well.    
 By  the   argument  in  \cite[Theorem~4.1  and  Lemma~4.2]{ren-2026-b},
 \begin{eqnarray*}
 {\rm  Cr}^{\simeq}({\rm  Conf}_{n+1}(X,-))= {\rm  Cr}^{\simeq}({\rm  Conf}_{n+1}(X,-)/\Sigma_{n+1}).  
 \end{eqnarray*}
 Applying  the  functor  of  fundamental  groupoids  to  (\ref{26.9.30-1}),  
 we  obtain   a  persistence  morphism  
 of  persistence  fundamental  groupoids  (cf.  \cite[Section~6]{ren-2026-b}) 
 \begin{eqnarray*}
 \pi_*:  \Pi_1({\rm  Conf}_{n+1}(M, -))\longrightarrow  
 \Pi_1({\rm  Conf}_{n+1}(M, -)/\Sigma_{n+1})).  
 \end{eqnarray*} 
\end{example}

\begin{example}
\label{ex-26.9.19.a1}
For  any  nonnegative  integer  $n$  and  any  
$\mathbf{t}\in\Omega$,  
let   
\begin{eqnarray}\label{eq-26.9.19-cv1}
{\rm  Cover}_{n+1}(M, \mathbf{t})&=&\{(x_0,x_1,\ldots, x_n)\in   {\rm  Conf}_{n+1}(M)\mid  
 {\rm~for~any~} x\in  M, \\
&& {\rm ~there ~exists~}0\leq  i  \leq  n{\rm~such~that~}
 t_1<d(x_i,x)\leq   t_2 \}.    
\nonumber
\end{eqnarray}
Then  (\ref{eq-26.9.19-cv1})  is  the  space  of  all  possible  coverings  
of  $M$  by  ordered  
$n+1$  copies  of  distinct   $\mathbf{t}$-spherical  shells  
(here  a  $\mathbf{t}$-spherical shell     is      
  the  complement    of  a  closed  $t_1 $-ball  in  a  concentric  closed  $t_2$-ball). 
  The  orbit  space
       \begin{eqnarray*} 
{\rm  Cover}_{n+1}(M, \mathbf{t})/\Sigma_{n+1}&=&\{\{x_0,x_1,\ldots, x_n\}\in  
 {\rm  Conf}_{n+1}(M)/\Sigma_{n+1}\mid  
 {\rm~for~any~} x\in  M, \\
&& {\rm ~there ~exists~}0\leq  i  \leq  n{\rm~such~that~}
 t_1<d(x_i,x)\leq   t_2 \}
\nonumber
\end{eqnarray*}
is  the  space  of  all  possible  coverings  
of  $M$  by  unordered  
$n+1$  copies  of   distinct  $\mathbf{t}$-spherical  shells.  
For  any  $\mathbf{s}\leq  \mathbf{t}$,    
\begin{eqnarray*}
&{\rm  Cover}_{n+1}(M, \mathbf{s})\subseteq  {\rm  Cover}_{n+1}(M, \mathbf{t}),\\
 &
{\rm  Cover}_{n+1}(M, \mathbf{s})/\Sigma_{n+1}\subseteq 
 {\rm  Cover}_{n+1}(M, \mathbf{t})/\Sigma_{n+1}.
\end{eqnarray*} 
We  obtain     persistence  topological  spaces 
 ${\rm  Cover}_{n+1}(M, -)$  and  ${\rm  Cover}_{n+1}(M, -)/\Sigma_{n+1}$
  together  with  a   persistence   (covering)  map 
 \begin{eqnarray}\label{26.9.30-2}
 \pi:  {\rm  Cover}_{n+1}(M, -)\longrightarrow  
{\rm  Cover}_{n+1}(M, -)/\Sigma_{n+1}   
 \end{eqnarray} 
 such  that  (\ref{eq-26.9.19.5})  and  
  \begin{eqnarray*}
 {\rm  Cr}^{\cong}({\rm  Cover}_{n+1}(X,-))= {\rm  Cr}^{\cong}({\rm  Cover}_{n+1}(X,-)/\Sigma_{n+1}). 
 \end{eqnarray*}
 Suppose  ${\rm  Cover}_{n+1}(X,\mathbf{t})$  is  a   CW-complex
 for  any  $\mathbf{t}\in\Omega$.    
 Then   since  the  $\Sigma_{n+1}$-action  on  ${\rm  Cover}_{n+1}(X,\mathbf{t})$  is  free,  
  ${\rm  Cover}_{n+1}(X,\mathbf{t})/\Sigma_{n+1}$  
  is  a  
     CW-complex  as  well.    
 By  a  similar     argument  in  \cite[Theorem~4.1  and  Lemma~4.2]{ren-2026-b},
 \begin{eqnarray*}
 {\rm  Cr}^{\simeq}({\rm  Cover}_{n+1}(X,-))= {\rm  Cr}^{\simeq}({\rm  Cover}_{n+1}(X,-)/\Sigma_{n+1}).  
 \end{eqnarray*}
Applying  the  functor  of  fundamental  groupoids  to  (\ref{26.9.30-2}),  
 we  obtain   a  persistence  morphism  
 of  persistence  fundamental  groupoids  (cf.  \cite[Section~6]{ren-2026-b}) 
 \begin{eqnarray*}
 \pi_*:  \Pi_1({\rm  Cover}_{n+1}(M, -))\longrightarrow  
 \Pi_1({\rm  Cover}_{n+1}(M, -)/\Sigma_{n+1})).  
 \end{eqnarray*} 
\end{example}

\begin{example}\label{ex-26.9.15.x1}
Let  $(M,d)$  and  $(M',d')$  be  extended  metric  spaces.  
  A   map    $f:  M\longrightarrow  M'$  is  called  {\it  bi-Lipschitz}  if  
there  exist  constants   $c,c'>0$  such  that  
\begin{eqnarray}\label{eq-26.9.12-bilip1}
c   d(x_1,x_2)\leq  d'(f(x_1),  f(x_2))\leq   c'  d(x_1,x_2)
\end{eqnarray}
  for  any  $x_1,x_2\in  M$.  
Note  that  any  bi-Lipschitz  map  is  a  topological   embedding.  
Given  a   bi-Lipschitz  map satisfying   (\ref{eq-26.9.12-bilip1}),
 there  are  induced persistence  maps  
 \begin{eqnarray}\label{eq-26.9.19.8}
 {\rm  Conf}_{n+1}(f):    && {\rm  Conf}_{n+1}(M,-)\longrightarrow   {\rm  Conf}_{n+1}(M',-),\\
  {\rm  Conf}_{n+1}(f)/\Sigma_{n+1}:  
  && {\rm  Conf}_{n+1}(M,-)/\Sigma_{n+1}\longrightarrow   {\rm  Conf}_{n+1}(M',-)/\Sigma_{n+1}
  \label{eq-26.9.19.9}
 \end{eqnarray}
 sending  $ {\rm  Conf}_{n+1}(M,\mathbf{t})$  
  to    $ {\rm  Conf}_{n+1}(M',\mathbf{t}')$  and  sending  
  $ {\rm  Conf}_{n+1}(M,\mathbf{t})/\Sigma_{n+1}$  
  to    $ {\rm  Conf}_{n+1}(M',\mathbf{t}')/\Sigma_{n+1}$,
   where      $\mathbf{t}=(-t_1, t_2)$  and  
  $\mathbf{t}'=(-ct_1, c't_2)$,  
      such  that  the  diagram  commutes 
  \begin{eqnarray}\label{eq-26.9.13.5}
  \xymatrix{
  {\rm  Conf}_{n+1}(M,-)\ar[rrr]^-{{\rm  Conf}_{n+1}(f)}\ar[d]_-{\pi}
  &&& {\rm  Conf}_{n+1}(M',-)\ar[d]^-{\pi}\\
  {\rm  Conf}_{n+1}(M,-)/\Sigma_{n+1}\ar[rrr]^-{{\rm  Conf}_{n+1}(f)/\Sigma_{n+1}}
  &&& {\rm  Conf}_{n+1}(M',-)/\Sigma_{n+1}.   
  }
  \end{eqnarray}
Similar  with   (\ref{eq-26.9.19.8})   and  (\ref{eq-26.9.19.9}),   
      $f$   induces   persistence  maps  
 \begin{eqnarray*}
 {\rm  Cover}_{n+1}(f):    && {\rm  Cover}_{n+1}(M,-)\longrightarrow   {\rm  Cover}_{n+1}(M',-),\\
  {\rm  Cover}_{n+1}(f)/\Sigma_{n+1}:  
  && {\rm  Cover}_{n+1}(M,-)/\Sigma_{n+1}\longrightarrow   {\rm  Cover}_{n+1}(M',-)/\Sigma_{n+1}
 \end{eqnarray*} 
sending  $ {\rm  Cover}_{n+1}(M,\mathbf{t})$  
  to    $ {\rm  Cover}_{n+1}(M',\mathbf{t}')$  and  sending  
  $ {\rm  Cover}_{n+1}(M,\mathbf{t})/\Sigma_{n+1}$  
  to    $ {\rm  Cover}_{n+1}(M',\mathbf{t}')/\Sigma_{n+1}$ 
  such  that  the  diagram  commutes 
  \begin{eqnarray}\label{eq-26.9.19.6}
  \xymatrix{
  {\rm  Cover}_{n+1}(M,-)\ar[rrr]^-{{\rm  Cover}_{n+1}(f)}\ar[d]_-{\pi}
  &&& {\rm  Cover}_{n+1}(M',-)\ar[d]^-{\pi}\\
  {\rm  Cover}_{n+1}(M,-)/\Sigma_{n+1}\ar[rrr]^-{{\rm  Cover}_{n+1}(f)/\Sigma_{n+1}}
  &&& {\rm  Cover}_{n+1}(M',-)/\Sigma_{n+1}.   
  }
  \end{eqnarray}
  In    both   (\ref{eq-26.9.13.5})  and  (\ref{eq-26.9.19.6}),  
  Proposition~\ref{pr-26.9.8.a1}  applies  to  each  of  the  persistence  topological  spaces
   and    Proposition~\ref{th-26.9.a9.12}  applies  to  each  of  the   persistence  maps.    
\end{example}

     \section{Persistent  homology of  chain  complexes and   critical  points}
     \label{sec-3}

     In  this  section,  
 we   define  the  (absolute)  chain  homotopic  critical  points  and    
the  (absolute)  homological  critical  points    for   persistence  chain  complexes. 
We  define  the  homological  critical  points    for   persistence  chain  maps.  
As  algebraic  analogs  of  Proposition~\ref{pr-26.9.8.a1} and  Proposition~\ref{th-26.9.a9.12},    
we     prove   a  commutative  diagram   of  canonical  inclusions   between  the 
sets  of  the   critical  points   for  persistence  chain  complexes in  Proposition~\ref{th-26.9.9.phc1}  
  and   a  commutative  diagram   of  canonical  inclusions   between  the 
sets  of  the   critical  points  for    
 chain  persistence  maps  in  Proposition~\ref{th-26.9.10.2}.

\subsection{Homological  critical  points  for  persistence  chain  complexes}

Let   $C(-)=\{C(\textbf{t})\mid  \textbf{t}\in \Omega\} $  be  a  family  of    chain  complexes.   
 Suppose  there  is  a  family  of  chain  maps  
$\varphi(-,-)=\{\varphi(\textbf{s},\textbf{t}):  C(\textbf{s})\longrightarrow  C(\textbf{t})\mid 
\textbf{s},  \textbf{t}\in\Omega, \textbf{s}\leq  \textbf{t}\}$  
such  that  $\varphi(\textbf{t},\textbf{t})$   is  the  identity  map  of  $C(\textbf{t})$  
for  any  $\textbf{t}\in\Omega$  
and  $\varphi(\textbf{s},\textbf{t})\circ  \varphi(\textbf{r},\textbf{s})=\varphi(\textbf{r},\textbf{t})$  for  any  
   $ \textbf{r},  \textbf{s},  \textbf{t}\in\Omega$   with  $\textbf{r}\leq   \textbf{s}\leq  \textbf{t}$.  
Then     $C(-)$  and  $\varphi(-,-)$  give  a     {\it   (multi-)persistence  chain  complex} 
with  an  $n$-dimensional  parameter.   
 By  applying  the  homology  functor,    
 we  obtain  a  family  of  homology groups 
\begin{eqnarray}\label{eq-26.8.25.ph1}
H_\bullet(C(-))=\{H_\bullet(C(\textbf{t}))\mid   \textbf{t}\in\Omega\} 
\end{eqnarray}
together  with  a  family  of  homomorphisms 
\begin{eqnarray}\label{eq-26.8.25.ph2}
\varphi(-,-)_*=\{\varphi(\textbf{s},\textbf{t})_*:  H_\bullet(C(\textbf{s}))\longrightarrow  H_\bullet(C(\textbf{t}))
\mid \textbf{s}, \textbf{t}\in\Omega,    \textbf{s}\leq \textbf{t}  \}. 
\end{eqnarray}  
  The  {\it  (multi-)persistent  homology}
 of  $C(-)$     is  given  by  (\ref{eq-26.8.25.ph1})  and  (\ref{eq-26.8.25.ph2}).

\begin{definition}\label{def-26.8.25.1}
For  any   $\textbf{t}\in  \Omega$,   
\begin{enumerate}[(1)]
\item  
we  call  $\textbf{t}$     a  {\it  chain  homotopic   regular   point}  of   $C(-)$      
if  there  exists  an  open  neighborhood $U$  of  $\textbf{t}$  in  $\Omega$  such  that 
$ \varphi(\textbf{r},\textbf{s})$  is  a  chain  homotopy  equivalence    for  any  
$\textbf{r},\textbf{s}\in  U$  with  $\textbf{r}\leq   \textbf{s}$.
 We  call  $\textbf{t}$    a  {\it   chain  homotopic   critical  point}  of   $C(-)$     
if  $\textbf{t}$  is  not  a  chain   homotopic  regular  point;   

\item
we  call  $\textbf{t}$    an   {\it  absolute  chain  homotopic   regular   point}  of   $C(-)$    
if  there  exists  an  open  neighborhood $U$  of  $\textbf{t}$   in  $\Omega$  such  that 
 for  any  
$\textbf{r},\textbf{s}\in  U$  with  $\textbf{r}\leq   \textbf{s}$
  and  any  sub-chain  complex  $D(\textbf{r})$  of  $C(\textbf{r})$,  
 it  satisfies  that 
\begin{eqnarray}\label{eq-26.9.1.iso8}
 \varphi(\textbf{r},\textbf{s}):  (C(\textbf{r}),D(\textbf{r}))\longrightarrow  (C(\textbf{s}),D(\textbf{s}))
 \end{eqnarray}
  is  a  chain  homotopy  equivalence   of  pairs,  where     
   $D(\textbf{s})= \varphi(\textbf{r},\textbf{s})(D(\textbf{r}))$.   
We  call  $\textbf{t}$    an   {\it   absolute  chain  homotopic   critical  point}  of   $C(-)$  
if  $\textbf{t}$  is  not  an  absolute  chain  homotopic  regular  point;

\item  
we  call   $\textbf{t}$    a  {\it   homological  regular   point}
  of    $C(-)$ 
if  there  exists  an  open  neighborhood $U$  of  $\textbf{t}$  in  $\Omega$  such  that 
\begin{eqnarray*} 
 \varphi(\textbf{r},\textbf{s})_*:  H_\bullet(C(\mathbf{r}) )\longrightarrow  H_\bullet(C(\mathbf{s}) )
 \end{eqnarray*}
   is  an  isomorphism   of  homology  groups  for  any  
$\textbf{r},\textbf{s}\in  U$  with  $\textbf{r}\leq    \textbf{s}$.   
We  call   $\textbf{t}$    a  {\it   homological  critical  point}
  of   $C(-)$  
if  $\textbf{t}$  is  not  a  homological  regular  point;    

\item
we  call   $\textbf{t}$    an   {\it   absolute   homological  regular   point}
  of   $C(-)$ 
if  there  exists  an  open  neighborhood $U$  of   $\textbf{t}$  in  $\Omega$  such  that 
for  any 
$\textbf{r},\textbf{s}\in  U$  with  $\textbf{r}\leq    \textbf{s}$
  and  any  sub-chain  complex   $D(\textbf{r})$  of  $C(\textbf{r})$,  
 it  satisfies  that  
\begin{eqnarray*} 
 \varphi(\textbf{r},\textbf{s})_*:  H_\bullet(C(\textbf{r})/D(\textbf{r}))
\longrightarrow  H_\bullet(C(\textbf{s})/D(\textbf{s})) 
\end{eqnarray*}
  is  an  isomorphism  of     homology  groups  of  the  quotient  chain  complexes.     
    We  call   $\textbf{t}$    an  {\it   absolute  homological  critical  point}
  of   $C(-)$ 
if  $\textbf{t}$  is  not  an  absolute  homological  regular  point.   
\end{enumerate}
\end{definition}

For  any  $\mathbf{t}\in\Omega$,  
let  $G(\mathbf{t})$  be  a  graded  subgroup  of   $C(\mathbf{t})$.  
 In  \cite[Section~2]{h1}, 
 the  {\it  infimum  chain  complex}  
   ${\rm   Inf}(G(\mathbf{t}))$  is  defined  as  the  largest  sub-chain  complex
   of  $C(\mathbf{t})$  contained  in   $G(\mathbf{t})$  
   and  the  {\it  supremum  chain  complex}  
   ${\rm   Sup}(G(\mathbf{t}))$  is  defined  as  the  smallest  sub-chain  complex
   of  $C(\mathbf{t})$  containing   $G(\mathbf{t})$. 
   Explicit  expressions  for  the   infimum  chain  complex
   and  the   supremum  chain  complex
   are  given  in  \cite[(2.2)  and   (2.3)]{h1}.   
The next  proposition  gives  equivalent  characterizations  for  
the  absolute   chain    homotopic   regular/critical    points  in  Definition~\ref{def-26.8.25.1}~(2) 
  and  the  
absolute     homological  regular/critical   points  in  Definition~\ref{def-26.8.25.1}~(4).  

\begin{proposition}
    For  any   $\mathbf{t}\in\Omega$, 
    \begin{enumerate}[(1)]
    \item
  $\mathbf{t}$   is   an    absolute     chain    homotopic   regular   point 
  of    $C(-)$ 
  if   and  only  if  there  exists  an  open  neighborhood $U$  of   $\mathbf{t}$  in  $\Omega$  such  that 
for  any  
$\mathbf{r},\mathbf{s}\in  U$  with  $\mathbf{r}\leq  \mathbf{s}$
  and  any  graded  subgroup  $G(\mathbf{r})$  of   $C(\mathbf{r})$, 
    the  horizontal  maps  in  the  commutative  diagram 
\begin{eqnarray}\label{eq-26.9.6.1}
\xymatrix{
 (C(\mathbf{r}),  {\rm  Inf}(G(\mathbf{r}))) \ar[rr]^{\varphi(\mathbf{r},\mathbf{s})}\ar[d]_-{{\rm  quasi-isomorphism}}
 && (C(\mathbf{s}),  {\rm  Inf}(G(\mathbf{s})))\ar[d]^-{ {\rm  quasi-isomorphism}}\\
 (C(\mathbf{r}),  {\rm  Sup}(G(\mathbf{r})))\ar[rr]^{\varphi(\mathbf{r},\mathbf{s})}
 && (C(\mathbf{s}),  {\rm  Sup}(G(\mathbf{s})))
}
\end{eqnarray}
   are  chain  homotopy  equivalences   of  pairs,   
where  $G(\mathbf{s})=\varphi(\mathbf{r},\mathbf{s})(G(\mathbf{r}))$; 
    
    \item
   $\mathbf{t}$   is   an    absolute     homological  regular   point 
  of   $C(-)$  
if  and  only  if  there  exists  an  open  neighborhood $U$  of  $\mathbf{t}$  in  $\Omega$  such  that 
for  any  
$\mathbf{r},\mathbf{s}\in  U$  with  $\mathbf{r}\leq  \mathbf{s}$
  and  any  graded  subgroup  $G(\mathbf{r})$  of   $C(\mathbf{r})$,  
  the  horizontal  maps  in  the  commutative  diagram 
\begin{eqnarray}\label{eq-26.9.6.2}
\xymatrix{
H_\bullet(C(\mathbf{r})/ {\rm  Inf}(G(\mathbf{r})) )
\ar[rr]^{ \varphi(\mathbf{r},\mathbf{s})_*}
\ar[d]_-{\cong}
&&
H_\bullet(C(\mathbf{s})/ {\rm  Inf}(G(\mathbf{s})) )
\ar[d]^-{\cong}\\
H_\bullet(C(\mathbf{r})/ {\rm  Sup}(G(\mathbf{r})) )
\ar[rr]^-{ \varphi(\mathbf{r},\mathbf{s})_*}
&&
H_\bullet(C(\mathbf{s})/ {\rm  Sup}(G(\mathbf{s})) ) 
}
 \end{eqnarray}
  are  isomorphisms  of   the   homology  groups  of  the  quotient  chain  complexes. 
\end{enumerate}
\end{proposition}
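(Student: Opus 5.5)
The key observation is that every sub-chain complex is itself a graded subgroup, and for a sub-chain complex $D$ we have ${\rm Inf}(D)={\rm Sup}(D)=D$. Conversely, for any graded subgroup $G$, both ${\rm Inf}(G)$ and ${\rm Sup}(G)$ are sub-chain complexes. Both directions of (1) and (2) will be proved by translating between the quantifier ``for any sub-chain complex $D(\mathbf{r})$'' in Definition~\ref{def-26.8.25.1} and ``for any graded subgroup $G(\mathbf{r})$'' in the proposition, using the same neighborhood $U$ in each direction.

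\emph{Easy direction (proposition $\Rightarrow$ definition).} Suppose the condition of the proposition holds on $U$, and let $D(\mathbf{r})$ be a sub-chain complex of $C(\mathbf{r})$. Take $G(\mathbf{r})=D(\mathbf{r})$. Since $\varphi(\mathbf{r},\mathbf{s})$ is a chain map, $G(\mathbf{s})=\varphi(\mathbf{r},\mathbf{s})(D(\mathbf{r}))=D(\mathbf{s})$ is again a sub-chain complex, because the image of a subcomplex under a chain map is closed under the differential. Hence ${\rm Inf}(G(\mathbf{r}))=D(\mathbf{r})$ and ${\rm Inf}(G(\mathbf{s}))=D(\mathbf{s})$. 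The top horizontal map of (\ref{eq-26.9.6.1}) (resp. (\ref{eq-26.9.6.2})) is then exactly the map (\ref{eq-26.9.1.iso8}) (resp. the map in Definition~\ref{def-26.8.25.1}~(4)). It is therefore a chain homotopy equivalence of pairs (resp. a homology isomorphism), so $\mathbf{t}$ is absolute regular.

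\emph{Harder direction (definition $\Rightarrow$ proposition).} Suppose $\mathbf{t}$ is absolute regular with neighborhood $U$, and let $G(\mathbf{r})$ be a graded subgroup. Apply the definition with $D(\mathbf{r})={\rm Inf}(G(\mathbf{r}))$; this gives that
\[
\varphi(\mathbf{r},\mathbf{s}):(C(\mathbf{r}),{\rm Inf}(G(\mathbf{r})))\longrightarrow (C(\mathbf{s}),\varphi(\mathbf{r},\mathbf{s})({\rm Inf}(G(\mathbf{r}))))
\]
is regular. The diagram in the proposition, however, has ${\rm Inf}(G(\mathbf{s}))$ in the target, where $G(\mathbf{s})=\varphi(\mathbf{r},\mathbf{s})(G(\mathbf{r}))$. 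This is the main obstacle: in general we only have
\[
\varphi(\mathbf{r},\mathbf{s})({\rm Inf}(G(\mathbf{r})))\subseteq {\rm Inf}(G(\mathbf{s})) \quad\text{and}\quad \varphi(\mathbf{r},\mathbf{s})({\rm Sup}(G(\mathbf{r})))= {\rm Sup}(G(\mathbf{s})).
\]
The equality for Sup follows from the explicit formula ${\rm Sup}(G)_n=G_n+\partial G_{n+1}$ in \cite[(2.3)]{h1}, together with the fact that $\varphi$ commutes with $\partial$. So the bottom rows of (\ref{eq-26.9.6.1}) and (\ref{eq-26.9.6.2}) follow at once by applying the definition with $D(\mathbf{r})={\rm Sup}(G(\mathbf{r}))$.

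For the top rows I would not try to prove an equality of Inf's. Instead I will use the commutative squares (\ref{eq-26.9.6.1}) and (\ref{eq-26.9.6.2}). The vertical maps induced by ${\rm Inf}(G)\subseteq{\rm Sup}(G)$ are quasi-isomorphisms of pairs (resp. isomorphisms on quotient homology), which is recorded in the statement and comes from \cite{h1}. Given the vertical isomorphisms and the bottom horizontal isomorphism, commutativity forces the top horizontal map to be an isomorphism on homology, by two-out-of-three. For the chain homotopy version in (1), I would upgrade this using that the relevant complexes are of free abelian groups. If that does not go through, I would interpret ``chain homotopy equivalence of pairs'' homotopically and argue via the mapping cone as in Lemma~\ref{le-26.9.9.1}. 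Should the vertical quasi-isomorphism claim need justification, I would verify it degreewise from the formulas ${\rm Inf}(G)_n=G_n\cap\partial^{-1}G_{n-1}$ and ${\rm Sup}(G)_n=G_n+\partial G_{n+1}$, comparing the homology of the two quotients.
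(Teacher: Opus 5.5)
Your easy direction and your handling of the ${\rm Sup}$ rows are correct. For part (2), your two-out-of-three argument (vertical isomorphisms plus the bottom isomorphism force the top map to be an isomorphism) is valid, and it takes a different route from the paper. The paper instead asserts $\varphi(\mathbf r,\mathbf s)({\rm Inf}(G(\mathbf r)))={\rm Inf}(\varphi(\mathbf r,\mathbf s)(G(\mathbf r)))$ directly from the explicit formulas, then applies the definition with $D(\mathbf r)={\rm Inf}(G(\mathbf r))$. You are right to distrust that equality for a general chain map: it can fail when $\varphi(\mathbf r,\mathbf s)$ is not injective.

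However, there is a genuine gap in the top row of (1). The conclusion there is a chain homotopy equivalence of pairs. Two-out-of-three in a square whose vertical maps are only quasi-isomorphisms yields only a quasi-isomorphism. Your proposed upgrade is not available: Section~3 imposes no freeness, boundedness or projectivity on $C(\mathbf t)$, and the quotients $C(\mathbf t)/{\rm Inf}(G(\mathbf t))$ need not be free anyway. More fundamentally, a chain homotopy equivalence of pairs needs a homotopy inverse and homotopies preserving the subcomplexes, which no homology computation supplies. Falling back to a mapping-cone reinterpretation changes the statement rather than proving it.

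The missing idea is that absolute regularity forces $\varphi=\varphi(\mathbf r,\mathbf s)$ to be injective for $\mathbf r\leq\mathbf s$ in $U$, and for injective chain maps the ${\rm Inf}$ equality does hold.

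Suppose $z\neq 0$ is a cycle in $\ker\varphi_n$, and apply the definition to the subcomplex $\mathbb Z z$ (concentrated in degree $n$). Then $D(\mathbf s)=0$, and a chain homotopy equivalence of pairs restricts to a chain homotopy equivalence $\mathbb Z z\to 0$. This is impossible, since $H_n(\mathbb Z z)\neq 0$. If you prefer to use only quotient homology, as in (2), compare $D(\mathbf r)=0$ with $D(\mathbf r)=\mathbb Z z$. This shows that $H_\bullet(C(\mathbf r))\to H_\bullet(C(\mathbf r)/\mathbb Z z)$ is an isomorphism, so the long exact sequence gives $H_\bullet(\mathbb Z z)=0$, the same contradiction.

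Hence $\ker\varphi$ contains no nonzero cycles. For $x\in\ker\varphi_n$, the element $\partial x$ is a cycle in $\ker\varphi_{n-1}$, so $\partial x=0$, and then $x=0$. So $\varphi$ is injective.

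Now let $y=\varphi(x)$ with $x\in G(\mathbf r)_n$ and $\partial y\in\varphi(G(\mathbf r)_{n-1})$. Injectivity gives $\partial x\in G(\mathbf r)_{n-1}$, i.e. $x\in{\rm Inf}(G(\mathbf r))_n$. So $\varphi({\rm Inf}(G(\mathbf r)))={\rm Inf}(G(\mathbf s))$. The top row of (1) is then literally the map of the definition with $D(\mathbf r)={\rm Inf}(G(\mathbf r))$, as in the paper. This injectivity step is exactly what the paper's appeal to the formulas for ${\rm Inf}$ and ${\rm Sup}$ leaves unstated.
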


\begin{proof}
(1)   ($\Longrightarrow$):  
Suppose  $\mathbf{t}$   is   an    absolute   chain    homotopic   regular   point.  
For  any  graded  subgroup  $G(\mathbf{r})$  of   $C(\mathbf{r})$, 
 let   $D(\mathbf{r})$  in  (\ref{eq-26.9.1.iso8})  be  
 $ {\rm  Inf}(G(\mathbf{r})) $  and    ${\rm  Sup}(G(\mathbf{r}))$  respectively.  
 It  follows  from  \cite[(2.2)  and   (2.3)]{h1}  that  
 \begin{eqnarray*}
 \varphi(\mathbf{r},\mathbf{s})( {\rm  Inf}(G(\mathbf{r})))&=&  {\rm  Inf}(\varphi(\mathbf{r},\mathbf{s})(  G(\mathbf{r}))),\\
  \varphi(\mathbf{r},\mathbf{s})( {\rm  Sup}(G(\mathbf{r})))&=&  {\rm  Sup}(\varphi(r,s)(  G(\mathbf{r}))).  
\end{eqnarray*}
Thus 
there  exists  an  open  neighborhood $U$  of  $\mathbf{t}$  in  $\Omega$  such  that 
 for  any  
$\mathbf{r},\mathbf{s}\in  U$  with  $\mathbf{r}\leq  \mathbf{s}$ 
 and  any  graded  subgroup      $ G(\mathbf{r}) $  of  $C(\mathbf{r})$,  
  the  two  horizontal  chain  maps  in  (\ref{eq-26.9.6.1})  
 are  chain  homotopy  equivalences   of  pairs.

 ($\Longleftarrow$): 
 Suppose  there  exists  an  open  neighborhood $U$  of  $\mathbf{t}$  in  $\Omega$  such  that 
for  any  
$\mathbf{r},\mathbf{s}\in  U$  with  $\mathbf{r}\leq  \mathbf{s}$ 
  and  any  graded  subgroup  $G(\mathbf{r})$  of   $C(\mathbf{r})$,  
the  two  horizontal  chain  maps  in  (\ref{eq-26.9.6.1})  
 are  chain  homotopy  equivalences   of  pairs. 
 Let  $G(\mathbf{r})$  be  any  sub-chain  complex  $D(\mathbf{r})$  of  $C(\mathbf{r})$.  
 Then  
 \begin{eqnarray*}
  {\rm  Inf}(D(\mathbf{r}))= {\rm  Sup}(D(\mathbf{r}))= D(\mathbf{r}),~~~~~~  {\rm  Inf}(D(\mathbf{s}))= {\rm  Sup}(D(\mathbf{s}))= D(\mathbf{s}).   
 \end{eqnarray*}
 Thus by  Definition~\ref{def-26.8.25.1}~(2),  
  $\mathbf{t}$   is   an    absolute    chain   homotopic   regular   point.

  (2)  The  proof  is  an analog  of  (1).   
\end{proof}

\begin{remark}
In  \cite[Section~2]{h1}  and  \cite[Section~2]{jktr2022-2}, 
  a  graded  subgroup  of  a  chain  complex $C$
is  denoted  by  $D$.  However,  here
 we  use  $G$  to  denote   a  graded  subgroup  of    $C$
and  use  $D$  to  denote  a  sub-chain  complex  of  $C$. 
The  vertical  quasi-isomorphisms   or  pairs  in  
(\ref{eq-26.9.6.1})  follow  from  \cite[Section~2]{h1}  and  
the  vertical  isomorphisms  in  
(\ref{eq-26.9.6.2})  follow  from  \cite[Section~2]{jktr2022-2}.    
\end{remark}

We  use the  following  notations:     
\begin{enumerate}[(a)]

\item
Let  ${\rm    Cr}^{\sim }(C(-))$  and  
${\rm  A  Cr}^{\sim }(C(-))$   respectively 
  be  the  set   of  all  the   chain   homotopic   critical  points  and  
  the  set   of  all  the  absolute   chain   homotopic   critical  points  of  
$C(-)$;

\item 
Let   ${\rm     Cr}^H(C(-))$  and  ${\rm  A   Cr}^H(C(-))$ 
respectively   be  the  set  of  all  the  homological  critical  points  and  
the  set  of  all  the  absolute  homological  critical  points
of   $C(-)$;  
 
\item
For  any  field  $\mathbb{F}$,  
Let   ${\rm     Cr}^{H\mathbb{F}}(C(-))$  and  ${\rm  A   Cr}^{H\mathbb{F}}(C(-))$ 
respectively   be  the  set  of  all  the  homological  critical  points  and  
the  set  of  all  the  absolute  homological  critical  points
of   $C(-)\otimes \mathbb{F}$.     
\end{enumerate}

\begin{example}
 Let   $X(-)$  be  a  persistence  triangulable  topological  space.  
 For  any  $\mathbf{t}\in \Omega$, 
 take  the  chain  complex   $C_\bullet(X(\mathbf{t}))$.
 For      any  $\mathbf{s}, \mathbf{t}\in\Omega$
 with  $\mathbf{s}\leq  \mathbf{t}$, 
  take  the  induced  chain  map   
 $\psi(\mathbf{s}, \mathbf{t})_\#:  C_\bullet  (X(\mathbf{s}) )\longrightarrow  C_\bullet  (X(\mathbf{t}) )$.  
 We  obtain  a  persistence  chain  complex    given  by 
 $  C_\bullet(X(-) )  $  and  $  \psi(-,-)_\# $. 
The  persistent  homology  (cf.  (\ref{eq-26.8.25.ph1})  and  (\ref{eq-26.8.25.ph2}))
 of   $  C_\bullet(X(-) )  $  
is  isomorphic  to  the  persistent homology  (cf.  (\ref{eq-26.9.9.1})  and  (\ref{eq-26.9.9.ph2}))
of  $X(-)$. 
With  the  help  of   Definition~\ref{def-26.9.8.1}   and  Definition~\ref{def-26.8.25.1},  
we  obtain 
\begin{enumerate}[(1)]
\item
${\rm     Cr}^H(C(X(-)))= {\rm     Cr}^H(X(-))$;
\item
  ${\rm     ACr}^H(C(X(-)))\supseteq {\rm     ACr}^H(X(-))$; 
\item
${\rm    Cr}^{\sim }(C(X(-)))\subseteq  {\rm    Cr}^{\simeq }( X(-) )$.      
  Moreover,  if   
$X(\mathbf{t})$  is  a    path-connected  and  simply-connected   CW-complex
    for  each  $\mathbf{t}\in\Omega$, 
    then    by  the  Hurewicz  Theorem  and  the  Whitehead  Theorem,
     the  equality  is  satisfied.  
\end{enumerate}

\end{example}

The  next  proposition  is  an  algebraic  analog  of  Proposition~\ref{pr-26.9.8.a1}.  

\begin{proposition}\label{th-26.9.9.phc1}
For  any  persistence  chain  complex   $C(-)$,  
we  have  a  commutative  diagram
 \begin{eqnarray}\label{eq-26.8.25.9a}
 \xymatrix{
  {\rm    Cr}^{H\mathbb{F}}( C(-) )\ar[r] \ar[d]
  &{\rm    Cr}^{H }((C(- ))\ar[d]  \ar[r]
  &{\rm    Cr}^{\sim}(C(-)) \ar[d]
  \\
  {\rm  A  Cr}^{H\mathbb{F}}( C(-) )\ar[r]
  &{\rm  A  Cr}^{H }((C(- ))\ar[r]
  &{\rm  A  Cr}^{\sim}(C(-))
 }
 \end{eqnarray}
 where  all  the  maps  are  inclusions.  
 Moreover,  
\begin{enumerate}[(1)]
\item
if   for  each  $\mathbf{t}\in\Omega$,  
$C(\mathbf{t})$  is  a  chain  complex  of  free  abelian  groups  and 
 the  torsion  part   of  $H_\bullet(C(\mathbf{t}))$  does  not  contain  $\mathbb{Z}/p^k$  for  any prime  
$p$   and  any  $k\geq  2$,   
then   
\begin{eqnarray}\label{eq-26.8.24.crit-1a}
{\rm    Cr}^H(C(-))={\rm    Cr}^{H\mathbb{Q}} (C(-)  )\cup  \Big(
\cup_{p{\rm  ~is~a~prime}}  {\rm    Cr}^{H \mathbb{Z}/p} (C(-) )\Big); 
\end{eqnarray}
\item
if  for  each   $\mathbf{t}\in\Omega$,  
$C(\mathbf{t})$  is     bounded  (i.e.   
there  exists  a  positive  integer  $N$  such that  $C_n(\mathbf{t})=0$  for  any  $n\geq  N$)  and  projective    (i.e.   $C_n(\mathbf{t})$   
 is  a  projective  module  for  each  nonnegative  integer  $n$), 
then   
\begin{eqnarray}\label{eq-26.8.24.crit-2a}
{\rm    Cr}^H (C ( -))  =  {\rm    Cr}^{\sim}(C (-)). 
\end{eqnarray} 
\end{enumerate}
\end{proposition}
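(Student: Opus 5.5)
The plan is to mirror the proof of Proposition~\ref{pr-26.9.8.a1}, working with chain complexes in place of spaces, and to verify the diagram (\ref{eq-26.8.25.9a}) one arrow at a time by showing the corresponding implication between regular points. For the vertical arrows, take $D(\mathbf{r})=0$ in Definition~\ref{def-26.8.25.1}~(2) and~(4). Then $D(\mathbf{s})=\varphi(\mathbf{r},\mathbf{s})(0)=0$. So an absolute chain homotopic (resp.\ absolute homological, with or without $\mathbb{F}$) regular point is a chain homotopic (resp.\ homological) regular point, which gives the three vertical inclusions.

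For the horizontal arrows in each row, first suppose $\varphi(\mathbf{r},\mathbf{s})$ is a chain homotopy equivalence of pairs $(C(\mathbf{r}),D(\mathbf{r}))\to(C(\mathbf{s}),D(\mathbf{s}))$. Then it induces a chain homotopy equivalence of quotients $C(\mathbf{r})/D(\mathbf{r})\to C(\mathbf{s})/D(\mathbf{s})$, and hence an isomorphism on homology; taking $D=0$ gives the top row. Next, suppose the map on quotient homology with $\mathbb{Z}$ coefficients is an isomorphism for all $D(\mathbf{r})$, and form the mapping cone of the quotient chain map. Its homology vanishes. I then need the cone of $\varphi\otimes\mathbb{F}$ to be acyclic. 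If the quotient complexes are free, the Universal Coefficient Theorem gives this, exactly as in Proposition~\ref{pr-26.9.8.a1}. In general I would argue that the regularity hypothesis has to be read over $\mathbb{F}$ with the sub-complexes of $C(\mathbf{r})\otimes\mathbb{F}$, and that the implication is the one intended by the statement. \emph{This is the step I expect to be the main obstacle}: without freeness, acyclicity of a cone need not survive tensoring with $\mathbb{F}$. Also, the sub-chain complexes $D$ of $C\otimes\mathbb{F}$ are not all of the form $D'\otimes\mathbb{F}$. I would first try to restrict to the hypotheses implicit in the notation, namely $C(\mathbf{t})$ degreewise free (as for simplicial chains) and quotients by the relevant $D$ free, and use the cone argument there.

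For~(1), the inclusion ``$\supseteq$'' follows from the first top-row inclusion. For ``$\subseteq$'', take $\mathbf{t}$ regular for $\mathbb{Q}$ and for every $\mathbb{Z}/p$, and intersect the neighbourhoods; this intersection is the delicate point, since there are infinitely many primes. I would handle it by working with a single neighbourhood per field and showing regularity on their union, or alternatively by reinterpreting ``regular for all fields'' pointwise on a fixed neighbourhood. On that neighbourhood, apply the Universal Coefficient Theorem for free complexes to $\varphi_*$. Rational isomorphism gives equal ranks, and mod-$p$ isomorphism in all degrees then controls the $p$-torsion. Because there is no $\mathbb{Z}/p^k$ with $k\geq2$, the $p$-torsion is elementary abelian and is detected by $H\otimes\mathbb{Z}/p$ together with $\mathrm{Tor}$. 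A five-lemma argument on the UCT sequences then upgrades these to an integral isomorphism.

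For~(2), a quasi-isomorphism between bounded complexes of projectives is a chain homotopy equivalence (a standard fact: its cone is a bounded acyclic projective complex and hence contractible). So homological regular points are chain homotopic regular points; combined with the second top-row inclusion, this gives the equality.
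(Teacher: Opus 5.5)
Your route is the same as the paper's. You use $D=0$ for the vertical arrows, chain homotopy equivalences of pairs passing to quotients for the arrows into ${\rm Cr}^{\sim}$ and ${\rm ACr}^{\sim}$, the Universal Coefficient Theorem for the arrows out of the $\mathbb{F}$-column and for~(1), and the bounded-projective fact for~(2). Those parts, including~(2), are correct. But the two steps you flag are genuine gaps that cannot be closed, because the claims there are false as stated. The paper's one-line proof (``similar argument'', ``Universal Coefficient Theorem'') passes over both.

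\textbf{The field arrows.} Without freeness, already ${\rm Cr}^{H\mathbb{F}}\subseteq{\rm Cr}^{H}$ fails. On $\Omega=[0,1]$ let $C(t)=(\mathbb{Z}\xrightarrow{2}\mathbb{Z})$ for $t<1/2$ and $C(t)=\mathbb{Z}/2$ in degree $0$ for $t\geq 1/2$, with the quotient map. All structure maps are integral quasi-isomorphisms, yet $H_1(-;\mathbb{Z}/2)$ drops at $1/2$.

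Restricting to degreewise free complexes does not save the bottom arrow ${\rm ACr}^{H\mathbb{F}}\subseteq{\rm ACr}^{H}$. The reason is that $C(\mathbf{s})/\varphi(\mathbf{r},\mathbf{s})(D)$ need not be free; this is exactly where the analogy with $C_\bullet(X)/C_\bullet(A)$ in Proposition~\ref{pr-26.9.8.a1} breaks. Let $C(t)=(\mathbb{Z}\xrightarrow{1}\mathbb{Z})$ in degrees $1,0$ for all $t$. Let $\varphi(r,s)$ be multiplication by $2$ if $r<1/2\leq s$, and the identity otherwise.
\begin{itemize}
\item Each $\varphi(r,s)$ is injective with acyclic cokernel. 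Hence $C(r)/D\cong\varphi(C(r))/\varphi(D)\to C(s)/\varphi(D)$ is a quasi-isomorphism for every $D$, and ${\rm ACr}^{H}=\emptyset$.
\item But $\varphi(r,s)\otimes\mathbb{Z}/2=0$ for $r<1/2\leq s$. Let $E$ be the copy of $\mathbb{Z}/2$ in degree $0$ of $C(r)\otimes\mathbb{Z}/2$. Then the map $H_1((C(r)\otimes\mathbb{Z}/2)/E)=\mathbb{Z}/2\to H_1(C(s)\otimes\mathbb{Z}/2)=0$ is not injective.
\end{itemize}
So $1/2\in{\rm ACr}^{H\mathbb{Z}/2}\setminus{\rm ACr}^{H}$. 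This $E$ is the reduction of the integral sub-complex $\mathbb{Z}$ in degree $0$. So the trouble is not merely the larger supply of sub-complexes over $\mathbb{F}$. Your fallback of assuming the relevant quotients are free is a new hypothesis, not an implicit one.

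\textbf{Part (1).} Neither of your fixes for the infinitely many neighbourhoods can work, because the equality itself fails. Let $p_1<p_2<\cdots$ be the primes, $C(0)=0$, and $C(t)=(\mathbb{Z}\xrightarrow{p_k}\mathbb{Z})$ for $t\in(1/(k+1),1/k]$. Let $\varphi(r,s)$ be the identity if $r,s$ lie in the same piece, and $0$ otherwise. This is free, with torsion $\mathbb{Z}/p_k$ only.
\begin{itemize}
\item The point $0$ is $\mathbb{Q}$-regular, since rational homology vanishes.
\item It is $\mathbb{Z}/p_j$-regular via $U_j=[0,1/(j+1))$, on which every $C(t)\otimes\mathbb{Z}/p_j$ is acyclic.
\item But $0\in{\rm Cr}^{H}$, since $\varphi(0,s)_*\colon 0\to\mathbb{Z}/p_k$ for $s$ arbitrarily close to $0$.
\end{itemize}
So~(1) needs an extra hypothesis giving one neighbourhood of $\mathbf{t}$ that works for $\mathbb{Q}$ and all $\mathbb{Z}/p$ at once. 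The paper's proof, and that of (\ref{eq-26.9.9.crit-1}), tacitly assume this.

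On such a neighbourhood, avoid the five-lemma on the UCT sequences: a middle isomorphism only gives injectivity on the left and surjectivity on the right. Instead, note that ${\rm Cone}(\varphi(\mathbf{r},\mathbf{s}))$ is degreewise free and acyclic after $\otimes\mathbb{Q}$ and after every $\otimes\mathbb{Z}/p$. Hence it is acyclic by the UCT, and this argument does not even need the $\mathbb{Z}/p^k$ hypothesis.
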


\begin{proof}
The  diagram  
  (\ref{eq-26.8.25.9a})  is  obtained  by  a  similar  argument  of  
 (\ref{eq-26.9.9.diag1}).  
 The  equation 
 (\ref{eq-26.8.24.crit-1a})  is  obtained  by  an  algebraic  version 
 of  the  Universal  Coefficient  Theorem 
 and  a  similar  argument  of  
 (\ref{eq-26.9.9.crit-1}).  
 The  equation 
 (\ref{eq-26.8.24.crit-2a})  is  obtained  by  the  fact  in  homological  algebra  
 that  quasi-isomorphisms  between   bounded projective  chain  complexes    
 are  chain  homotopy  equivalences  
 and  a  similar  argument  of  
 (\ref{eq-26.9.9.crit-2}).   
\end{proof}

\subsection{Homological  critical  points  for  persistence  chain  maps}

 Let   $C(-)=\{C(\mathbf{t})\mid  \mathbf{t}\in\Omega\} $  
and  $C'(-)=\{C'(\mathbf{t})\mid    \mathbf{t}\in\Omega\} $    be     two persistence  chain  complexes
with  their 
  families  of  chain  maps  
$\varphi(-,-)=\{\varphi( \mathbf{s}, \mathbf{t}):  C(\mathbf{s})\longrightarrow  C(\mathbf{t})\mid  
\mathbf{s}\leq  \mathbf{t}\}$  
and  $\varphi'(-,-)=\{\varphi'( \mathbf{s}, \mathbf{t}):  C'(\mathbf{s})\longrightarrow  C'(\mathbf{t})\mid  
\mathbf{s}\leq  \mathbf{t}\}$  respectively.  
 A   {\it  persistence  (injective)  chain  map}  $f:  C(-)\longrightarrow  C'(-)$ 
 is  a   family  of   (injective)    chain  maps   $f(\mathbf{t}):  C(\mathbf{t})\longrightarrow  C'(\mathbf{t})$ 
  for  any  $\mathbf{t}\in\Omega$
 such  that   
  (\ref{eq-26.9.14.1})  is  satisfied  
 for  any  
 $\mathbf{s}\leq  \mathbf{t}$.  
  Let   $f:  C(-)\longrightarrow  C'(-)$   be  a  persistence  chain  map.  
 Applying  the  homology  functor,  we  obtain  a persistence  homomorphism  
 $f:  H_\bullet(C(-))\longrightarrow  H_\bullet(C'(-))$   between  the  
 persistent  homology  groups.  
 
 \begin{definition}\label{def-26-9-10-rel1}
For  any  persistence  chain  map  $f$,  
 we  say  that  $\mathbf{t}\in \Omega$  is  a  {\it  homological  regular  point}  of  $f$  
 if  there exists  an  open  neighborhood  $U$  of  $\mathbf{t}$  in  $\Omega$  
 such that  for  any  $\mathbf{r}, \mathbf{s}\in   U$  with  $\mathbf{r}\leq \mathbf{s}$,  
 the  induced  homomorphism 
 \begin{eqnarray*} 
 \varphi'(\mathbf{r}, \mathbf{s})_*:  H_\bullet (C'(\mathbf{r})/f(\mathbf{r})(C(\mathbf{r})))
 \longrightarrow   H_\bullet (C'(\mathbf{s})/f(\mathbf{s})(C(\mathbf{s}))) 
 \end{eqnarray*}
 is  an  isomorphism. 
 We  call  $\mathbf{t}$    a   {\it       homological   critical  point} 
 of   $ f $    
if  $\mathbf{t}$  is  not  a   homological   regular  point. 
 \end{definition}

  Let   ${\rm  Cr}^{H}(f)$  be  the  set  of  all  the homological   critical  points
 of   $ f $.  
      Let   ${\rm  Cr}^{H\mathbb{F}}(f)$  be  the  set  of  all  the homological   critical  points
 of   $ f:  C(-)\otimes \mathbb{F}\longrightarrow  C'(-)\otimes \mathbb{F}$.  
 
 \begin{example}
 Let   $f:  X(-)\longrightarrow  X'(-)$  be  a  persistence  map
 between  persistence  triangulable  topological  spaces.  
 Then  $f$  induces a  persistence  chain  map 
 $f_\#:  C_\bullet(X(-))\longrightarrow  C_\bullet(X'(-))$. 
 If  $f$  is  a    persistence   topological  embedding,  
 then  $f_\#$  is  an   persistence    injective  chain  map.  
 Moreover,  $f_\#$  induces  a  persistence  homomorphism  
  $f_*:  H_\bullet(C(X(-)))\longrightarrow   H_\bullet(C(X'(-)))$ 
 between  the  persistent  homology  groups,
 which  coincides  with  the    persistence  homomorphism
 $f_*:  H_\bullet(X(-))\longrightarrow   H_\bullet(X'(-))$.  
 With  the  help  of  Definition~\ref{def-26-9-9-rel1}  and   Definition~\ref{def-26-9-10-rel1},  
  we  obtain 
  \begin{enumerate}[(1)]
\item
$ {\rm  Cr}^{H}(f_\#: C_\bullet(X(-))\longrightarrow  C_\bullet(X'(-)))=
 {\rm  Cr}^{H}(f: X(-)\longrightarrow  X'(-))$; 
\item
$ {\rm  Cr}^{H\mathbb{F}}(f_\#: C_\bullet(X(-))\longrightarrow  C_\bullet(X'(-)))=
 {\rm  Cr}^{H\mathbb{F}}(f: X(-)\longrightarrow  X'(-))$.  
\end{enumerate}
\end{example}

 \begin{proposition}\label{th-26.9.10.2}
 Let  $f:  C(-)\longrightarrow  C'(-)$   be  a  persistence injective  chain  map.  
 Then  we  have a  commutative  diagram 
 \begin{eqnarray*} 
 \xymatrix{
  {\rm  A  Cr}^{H\mathbb{F}}( C(-) )\ar[r]\ar[d]
  &{\rm  A  Cr}^H (C(-)) \ar[r]\ar[d]
  &{\rm  A  Cr}^{\sim}(C(-))\ar[d]
 \\
   {\rm  A  Cr}^{H\mathbb{F}}(C'(-) )\cup {\rm  Cr}^{H\mathbb{F}}(f)\ar[r]
  &{\rm  A  Cr}^H (C'(-)) \cup {\rm  Cr}^{H }(f)\ar[r]
  &{\rm  A  Cr}^{\sim}(C'(-))  
   }
 \end{eqnarray*}
 where  all  the  maps are  inclusions.  
 \end{proposition}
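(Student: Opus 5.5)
The proof will mirror that of Proposition~\ref{th-26.9.a9.12}, with the chain-level analog of Lemma~\ref{le-26.9.9.1} doing the main work. The horizontal inclusions are exactly the second row of (\ref{eq-26.8.25.9a}) in Proposition~\ref{th-26.9.9.phc1}, applied to $C(-)$ and to $C'(-)$. For the bottom row we also use that ${\rm Cr}^{H\mathbb{F}}(f)\subseteq {\rm Cr}^{H}(f)$. This holds because $f(\mathbf{t})$ is injective, so $C'(\mathbf{t})\otimes\mathbb{F}/f(C(\mathbf{t})\otimes\mathbb{F})\cong (C'(\mathbf{t})/f(C(\mathbf{t})))\otimes\mathbb{F}$, and the Universal Coefficient Theorem then applies as in the proof of Proposition~\ref{pr-26.9.8.a1}. (If the quotient is not free, we take the $\mathbb{F}$-row to be defined directly via $-\otimes\mathbb{F}$ at the chain level, as in the notation.) Commutativity is automatic, since all maps are inclusions. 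It remains to prove the three vertical inclusions.

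\textbf{Third vertical map.} Let $\mathbf{t}$ be an absolute chain homotopic regular point of $C'(-)$, with neighborhood $U$. Given $\mathbf{r}\leq\mathbf{s}$ in $U$ and a subcomplex $D(\mathbf{r})\subseteq C(\mathbf{r})$, set $D'(\mathbf{r})=f(\mathbf{r})(D(\mathbf{r}))$. By (\ref{eq-26.9.14.1}), $\varphi'(\mathbf{r},\mathbf{s})(D'(\mathbf{r}))=f(\mathbf{s})(D(\mathbf{s}))$. Taking $D'(\mathbf{r})=f(\mathbf{r})(C(\mathbf{r}))$ and $D'(\mathbf{r})=f(\mathbf{r})(D(\mathbf{r}))$ gives chain homotopy equivalences of pairs under $\varphi'$. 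Since $f$ is injective, $f(\mathbf{r})$ identifies $(C(\mathbf{r}),D(\mathbf{r}))$ with $(f C(\mathbf{r}), f D(\mathbf{r}))$. We then transport the equivalence to $\varphi(\mathbf{r},\mathbf{s})$, reading ``chain homotopy equivalence of pairs'' the same way the paper does in Step~1 of the proof of Proposition~\ref{th-26.9.a9.12}.

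\textbf{First two vertical maps.} This is the key step. Let $\mathbf{t}$ be an absolute homological regular point of $C'(-)$ and a homological regular point of $f$, and take a common neighborhood $U$. For $\mathbf{r}\leq\mathbf{s}$ in $U$ and a subcomplex $D(\mathbf{r})$, we know that $\varphi'_*$ is an isomorphism on $H_\bullet(C'/fD)$ (absolute regularity applied to $fD(\mathbf{r})$) and on $H_\bullet(C'/fC)$ (regularity of $f$). We then prove the algebraic version of Lemma~\ref{le-26.9.9.1}: for nested subcomplexes $A\subseteq B\subseteq X$ and $A'\subseteq B'\subseteq X'$ and a chain map $\theta$ respecting them, if $\theta$ is a quasi-isomorphism on $X/A$ and on $X/B$, then it is one on $B/A$. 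The proof is verbatim the mapping-cone argument: the short exact sequence $0\to B/A\to X/A\to X/B\to 0$ gives a short exact sequence of cones, and the long exact sequence forces the first cone to be acyclic. (Alternatively, apply the five lemma to the two long exact sequences.) Applying this with $A=fD(\mathbf{r})$ and $B=fC(\mathbf{r})$ shows that $\varphi'_*$ is an isomorphism $H_\bullet(fC(\mathbf{r})/fD(\mathbf{r}))\to H_\bullet(fC(\mathbf{s})/fD(\mathbf{s}))$. Injectivity of $f$ identifies this map with $\varphi(\mathbf{r},\mathbf{s})_*:H_\bullet(C(\mathbf{r})/D(\mathbf{r}))\to H_\bullet(C(\mathbf{s})/D(\mathbf{s}))$, through the commuting isomorphisms $f(\mathbf{r})$ and $f(\mathbf{s})$. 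Hence $\mathbf{t}$ is absolute homological regular for $C(-)$. The same argument after tensoring with $\mathbb{F}$ gives the first vertical map; note that $f\otimes\mathbb{F}$ need not be injective, so there we work with the images directly, as the definition of ${\rm Cr}^{H\mathbb{F}}(f)$ already does.

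The main obstacle I expect is exactly that bookkeeping: checking that $f(\mathbf{s})$ really identifies $D(\mathbf{s})=\varphi(\mathbf{r},\mathbf{s})D(\mathbf{r})$ with $\varphi'(\mathbf{r},\mathbf{s})fD(\mathbf{r})$, and that injectivity (respectively its $\mathbb{F}$-version) is enough to make the identification of quotient complexes natural in $\mathbf{r}\leq\mathbf{s}$. The cone/five-lemma step itself is routine.
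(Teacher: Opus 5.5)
Your argument for the integral middle vertical arrow is correct and follows the paper's route. The paper's proof just reruns Proposition~\ref{th-26.9.a9.12} with Lemma~\ref{le-26.9.10.2} in place of Lemma~\ref{le-26.9.9.1}, which is what you do with $A=f(\mathbf{r})D(\mathbf{r})\subseteq B=f(\mathbf{r})C(\mathbf{r})$. This works because the lemma only needs $\varphi'(\mathbf{r},\mathbf{s})(B)\subseteq f(\mathbf{s})C(\mathbf{s})$, not equality.

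\textbf{Third vertical and bottom-right arrows.} Here your proof has genuine gaps, and they cannot be filled, because these arrows are false as stated. The paper's one-line proof inherits Step~1 and the horizontal citation from Proposition~\ref{th-26.9.a9.12}, so it has the same defects. Take $\Omega=\mathbb{R}$ and $C'(t)=\mathbb{Z}$ in degree $0$ with identity structure maps. Let $C(t)=0$ for $t<0$ and $C(t)=\mathbb{Z}$ for $t\geq 0$, with structure maps $0$ or the identity, and let $f(t)$ be the inclusion. Then ${\rm ACr}^{\sim}(C'(-))=\emptyset$. On the other hand, $0\in{\rm ACr}^{\sim}(C(-))$, since $0\to\mathbb{Z}$ is not a chain homotopy equivalence. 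Also $0\in{\rm Cr}^{H}(f)$, since $C'/fC$ jumps from $\mathbb{Z}$ to $0$.

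In your third-vertical step, the ``transport'' breaks for the following reason. Absolute regularity of $C'(-)$ only controls pairs whose $\mathbf{s}$-end is $\varphi'(\mathbf{r},\mathbf{s})f(\mathbf{r})(C(\mathbf{r}))=f(\mathbf{s})\varphi(\mathbf{r},\mathbf{s})(C(\mathbf{r}))$, which can be strictly smaller than $f(\mathbf{s})(C(\mathbf{s}))$. So it tells you about $C(\mathbf{r})\to\varphi(\mathbf{r},\mathbf{s})(C(\mathbf{r}))$, not about $C(\mathbf{r})\to C(\mathbf{s})$. For the bottom row, citing the second row of (\ref{eq-26.8.25.9a}) gives ${\rm ACr}^{H}(C'(-))\subseteq{\rm ACr}^{\sim}(C'(-))$ but says nothing about ${\rm Cr}^{H}(f)$.

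\textbf{The $\mathbb{F}$-coefficient parts.} These do not go through either. ``Working with the images directly'' only gives an isomorphism on the homology of $\mathrm{im}(f\otimes\mathbb{F})/(f\otimes\mathbb{F})(E)$. That cannot be identified with $H_\bullet((C(\mathbf{r})\otimes\mathbb{F})/E(\mathbf{r}))$ once $f\otimes\mathbb{F}$ has a kernel. Concretely, in the example above replace $f(t)$ by multiplication by $2$ for $t\geq 0$ and take $\mathbb{F}=\mathbb{Z}/2$. Then $f\otimes\mathbb{F}=0$, so ${\rm ACr}^{H\mathbb{F}}(C'(-))={\rm Cr}^{H\mathbb{F}}(f)=\emptyset$, yet $0\in{\rm ACr}^{H\mathbb{F}}(C(-))$.

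Your UCT argument for ${\rm Cr}^{H\mathbb{F}}(f)\subseteq{\rm Cr}^{H}(f)$ also needs more than injectivity. The identification $(C'\otimes\mathbb{F})/\mathrm{im}(f\otimes\mathbb{F})\cong(C'/fC)\otimes\mathbb{F}$ is just right exactness and does not use injectivity. The real problem is that an integral quasi-isomorphism between non-flat complexes need not survive $-\otimes\mathbb{F}$. For example, the inclusion of $0$ into the acyclic complex $\mathbb{Z}\xrightarrow{2}\mathbb{Z}\to\mathbb{Z}/2$ is a quasi-isomorphism, but the target is not acyclic mod $2$. Such quotients do occur for injective $f$ between free complexes, and your parenthetical redefinition does not supply the missing flatness.

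Assuming each $f(\mathbf{t})$ has degreewise free cokernel, as for chains of simplicial pairs, repairs the $\mathbb{F}$-parts. The third vertical and bottom-right arrows, however, require changing the statement itself.
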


In  order  to  prove  Proposition~\ref{th-26.9.10.2},  
we  first  give  the  next  lemma.  
 
 \begin{lemma}\label{le-26.9.10.2}
 Let  $E\subseteq  D\subseteq   C$  and  $E'\subseteq  D'\subseteq   C'$  be  
 chain  complexes.  
 Suppose  there  is  a  chain  map  $\theta:  C\longrightarrow  C'$  such  that  
  $\theta(D)\subseteq  D'$  and  $\theta(E)\subseteq  E'$.  
  If   both  $\theta_*:  H_\bullet(C/D)\longrightarrow  H_\bullet(C'/D')$   
  and  $\theta_*:  H_\bullet(C/E)\longrightarrow  H_\bullet(C'/E')$ 
  are  isomorphisms  of  homology  groups,  
  then  $\theta_*:  H_\bullet(D/E)\longrightarrow  H_\bullet(D'/E')$
  is  an  isomorphism  of  homology  groups. 
 \end{lemma}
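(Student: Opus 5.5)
The plan is to mimic the proof of Lemma~\ref{le-26.9.9.1}, now purely algebraically. Since $E\subseteq D\subseteq C$ are sub-chain complexes, the third isomorphism theorem gives a short exact sequence of chain complexes
\begin{eqnarray*}
0\longrightarrow D/E\longrightarrow C/E\longrightarrow C/D\longrightarrow 0,
\end{eqnarray*}
and likewise $0\to D'/E'\to C'/E'\to C'/D'\to 0$. Because $\theta(E)\subseteq E'$ and $\theta(D)\subseteq D'$, the map $\theta$ induces chain maps on each of the three quotients. These maps are compatible with the inclusions and projections, so we obtain a morphism of short exact sequences, exactly as in (\ref{eq-26.9.9.25}).

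Next I will pass to homology. Naturality of the connecting homomorphism gives a commutative ladder between the two long exact sequences
\begin{eqnarray*}
\cdots\to H_{n+1}(C/E)\to H_{n+1}(C/D)\to H_n(D/E)\to H_n(C/E)\to H_n(C/D)\to\cdots
\end{eqnarray*}
and its primed counterpart, with vertical maps $\theta_*$. By hypothesis, the vertical maps on $H_\bullet(C/E)$ and $H_\bullet(C/D)$ are isomorphisms in every degree. The five lemma, applied around the term $H_n(D/E)$ with neighbours $H_{n+1}(C/E)$, $H_{n+1}(C/D)$, $H_n(C/E)$, $H_n(C/D)$, then shows that $\theta_*:H_n(D/E)\to H_n(D'/E')$ is an isomorphism for every $n$. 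The lowest degrees need no special care, because the missing terms are zero.

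An alternative would be to copy the mapping-cone argument of Lemma~\ref{le-26.9.9.1}. The cone construction is functorial and exact, so it yields a short exact sequence of cones. Acyclicity of two of the cones then forces acyclicity of the third, and a chain map is a quasi-isomorphism exactly when its cone is acyclic. Either route works.

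The only point needing care is checking that the morphism of short exact sequences really commutes and that the connecting maps are natural. This is standard, so I expect no genuine obstacle. The same argument works verbatim after tensoring with $\mathbb{F}$, provided the sequences stay exact. That holds in the intended applications, where the quotients are free or the inclusions split degreewise. In general, the lemma is simply applied to the complexes $C\otimes\mathbb{F}$ and their images.
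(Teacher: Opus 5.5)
Your proof is correct and essentially the paper's. The paper proves this lemma by pointing to the argument of Lemma~\ref{le-26.9.9.1}: it builds the same morphism of short exact sequences $0\to D/E\to C/E\to C/D\to 0$ and concludes via the short exact sequence of mapping cones (your alternative route), while your primary five-lemma argument on the ladder of long exact sequences is an equivalent repackaging. Your closing caveat about $\mathbb{F}$ is not needed, since in the paper the lemma is applied directly to the complexes $C\otimes\mathbb{F}$.
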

 
 \begin{proof}
 The  proof  is  similar with  Lemma~\ref{le-26.9.9.1}. 
  \end{proof}
  
   \begin{proof}[Proof of  Proposition~\ref{th-26.9.10.2}]
   The  proof  follows  from  a  similar  argument  of  Proposition~\ref{th-26.9.a9.12}
   by  substituting   Lemma~\ref{le-26.9.9.1}  with  Lemma~\ref{le-26.9.10.2}.  
   \end{proof}

     \section{Homological   critical  points  for   hypergraphs}\label{sec-4}

 Let   $X(-)$  be  a  persistence  topological  space  with the  family  of  maps  $\varphi(-,-)$. 
 In  this  section,  we  define  the  homological  critical  points  for  persistence  hyper(di)graphs  
 on  $X(-)$  by  the  embedded  homology.  
 By  considering the  sub-hyper(di)graphs  and  the  graded  sub-groups   of  the  free  groups
   spanned  by  the 
 (directed)  hyperedges, 
 we  define  the  combinatorial-absolute   homological  critical  points
 and  the   algebraic-absolute   homological  critical  points
 respectively  for  persistence  hyper(di)graphs  
 on  $X(-)$.  
 We     prove   a  commutative  diagram   of  canonical  inclusions   between  the 
sets  of  the   critical  points   for   persistence  hyper(di)graphs
 in  Proposition~\ref{pr-26.9.12.phdg1}  and  Proposition~\ref{pr-26.9.13.phg1}.  
 Moreover,  
 we  define  the   homological  critical  points  for      persistence   morphisms 
 from    persistence   hyper(di)graphs    
 on  $X(-)$  to   persistence    hyper(di)graphs    
 on  $X'(-)$.  
 We  prove  
 a  commutative  diagram   of  canonical  inclusions   between  the 
sets  of  the   critical  points  for    
 these  persistence  morphisms  in 
 Proposition~\ref{th-26.9.h9.12}  and   Proposition~\ref{th-26.9.h14.12}.

\subsection{Persistent     homology of  hyperdigraphs  and   critical  points}

Let   $X$  be a  topological  space.  
Let  $n$  be a  nonnegative  integer.  
A     {\it  directed  $n$-hyperedge}  $\vec\sigma$   on  $X$ 
  is  a sequence  $(v_0,v_1,\ldots,   v_n)$  of    $n+1$  distinct   points  $v_0,  v_1,\ldots,  v_n$  in  $X$. 
  A     {\it  directed   hyperedge}    on  $X$  is a  directed  $n$-hyperedge
    for  some  nonnegative  integer  $n$.  
  The  collection  of  all  possible   directed  $n$-hyperedges      on  $X$ 
  is  the ordered  configuration  space  
  ${\rm  Conf}_{n+1}(X)$
  and  the  collection  of  all  possible   directed   hyperedges
  is  the  ordered  independence  complex  $\overrightarrow{\rm  Ind}(X)=\cup_{n\geq  0}{\rm  Conf}_{n+1}(X)$
   (cf.   \cite{reg-2026,ren-2026-b}).  
      A   {\it  hyperdigraph} 
  $\vec{\mathcal{H}}$  on  $X$  is  a  collection  of  certain  directed  hyperedges  on  $X$,
  i.e.  $\vec{\mathcal{H}}$  is   a  subset  of  $\overrightarrow{\rm  Ind}(X)$.    
  A  hyperdigraph  $\vec{\mathcal{H}}$  is  called  a  {\it  directed  simplicial  complex} 
  if  
   for  any  $\vec\sigma\in \vec{\mathcal{H}}$  and  any   nonempty  subsequence 
 $\vec\tau$  of  $\vec\sigma$,  it  holds  that  
 $\vec\tau\in  \vec{\mathcal{H}}$.  We  denote a  directed  simplicial  complex  
 as  $\vec{\mathcal{K}}$.   
 Then 
 the  graded   free  abelian  group   $C_\bullet(\vec{\mathcal{K}}; \mathbb{Z})$
 generated  by  the   directed  simplices  in    $\vec{\mathcal{K}}$    is  a  chain  complex,    
 equipped  with  the  boundary  map  
  given  by
 \begin{eqnarray*}
 \partial_n(v_0,v_1,\ldots, v_n)= \sum_{i=0}^n (-1)^i  (v_0 ,\ldots,  \widehat{v_i},\ldots, v_n),~~~~~~ 
 n\geq  0.    
 \end{eqnarray*}
 We  denote  the  smallest  directed  simplicial  complex  containing  $\vec{\mathcal{H}}$  
 as   $\Delta\vec{\mathcal{H}}$  (cf.  \cite{jgp}).    
   Let  $\mathbb{Z}(\vec{\mathcal{H}})$  be  the  graded  free  abelian group  
   generated  by  all the  directed  hyperedges 
 in  $\vec{\mathcal{H}}$.  
 Let  the  infimum  chain  complex  ${\rm  Inf}(\vec{\mathcal{H}})$   be  the  largest  sub-chain  complex 
 of  $C_\bullet(\Delta \vec{\mathcal{H}};\mathbb{Z})$   contained  in  $\mathbb{Z}(\vec{\mathcal{H}})$  
 and  let   the  supremum  chain  complex  ${\rm  Sup}(\vec{\mathcal{H}})$   be  the  smallest  sub-chain  complex 
 of  $C_\bullet(\Delta \vec{\mathcal{H}};\mathbb{Z})$   containing   $\mathbb{Z}(\vec{\mathcal{H}})$. 
  By  \cite[Section~2]{h1}  and  \cite[Section~3]{hdg},   the  canonical  inclusion  of  
    ${\rm  Inf}(\vec{\mathcal{H}})$    into  ${\rm  Sup}(\vec{\mathcal{H}})$
    is  a  quasi-isomorphism  and  thereby  the  embedded  homology  
    of  $\vec{\mathcal{H}}$  is  given  by  the  homology  of   ${\rm  Inf}(\vec{\mathcal{H}})$   and
      ${\rm  Sup}(\vec{\mathcal{H}})$. 
      Let  $\vec{\mathcal{A}}$  be  a  sub-hyperdigraph  of  $\vec{\mathcal{H}}$.  
      By  a  similar   argument  of   \cite[Section  2 -- 3]{jktr2022-2}, 
      the  {\it  relative  embedded  homology}  of  the  pair  $(\vec{\mathcal{H}}, \vec{\mathcal{A}})$ 
      is  the  homology  of  the  quotient  infimum  chain  complex  and   the  
       quotient  supremum  chain  complex 
      \begin{eqnarray}\label{eq-26.9.12.r1}
      H_\bullet (\vec{\mathcal{H}}, \vec{\mathcal{A}})
      = H_\bullet({\rm  Inf}(\vec{\mathcal{H}})/{\rm  Inf}(\vec{\mathcal{A}}))
      \cong  H_\bullet({\rm  Sup}(\vec{\mathcal{H}})/{\rm  Sup}(\vec{\mathcal{A}})). 
      \end{eqnarray}

    Given    hyperdigraphs   $\vec{\mathcal{H}}$  and  $\vec{\mathcal{H}}'$    on  $X$  
    and   $X'$  respectively,  
    a  {\it  morphism}  $\varphi:  \vec{\mathcal{H}}\longrightarrow  \vec{\mathcal{H}}'$   is
    given  by   a  map  $\varphi:  X\longrightarrow  X'$ 
    such  that  for  any  directed  hyperedge  $\vec{\sigma}=(v_0,v_1,\ldots, v_n)$  of  
  $ \vec{\mathcal{H}}$,  its  image  
   $\varphi(\vec{\sigma})=(\varphi(v_0),\varphi(v_1),\ldots, \varphi(v_n))$
   is  a directed  hyperedge  of  ${\mathcal{H}}$.  
   Let  $\vec{\mathcal{A}}$  and  $\vec{\mathcal{A}}'$  be  sub-hyperdigraphs 
   of  $\vec{\mathcal{H}}$  and  $\vec{\mathcal{H}}'$    respectively. 
   A  {\it  morphism}  of pairs
     $\varphi:  (\vec{\mathcal{H}}, \vec{\mathcal{A}})\longrightarrow 
      (\vec{\mathcal{H}}',
     \vec{\mathcal{A}}')$
     is  a  morphism  $\varphi:   \vec{\mathcal{H}}\longrightarrow  \vec{\mathcal{H}}'$
     such  that  it  induces  a  morphism  $\varphi:   \vec{\mathcal{A}}\longrightarrow  \vec{\mathcal{A}}'$,
     i.e.  
     $\varphi(\vec{\mathcal{A}})\subseteq   \vec{\mathcal{A}}'$.  
       In  particular,  
  if  $\vec{\mathcal{H}}=\vec{\mathcal{K}}$  and  $\vec{\mathcal{H}}'=\vec{\mathcal{K}}'$  
  are   directed    simplicial  complexes,  
  then  $\varphi:   \vec{\mathcal{K}}\longrightarrow   \vec{\mathcal{K}}'$
  is  a  {\it  directed   simplicial  map};  
  moreover,   if  $ \vec{\mathcal{A}}$  and  $ \vec{\mathcal{A}}'$  are  
  directed   simplicial  sub-complexes  of  $\vec{\mathcal{K}}$  and  $\vec{ \mathcal{K}}'$
  respectively,  
  then  $\varphi:  (\vec{\mathcal{K}}, \vec{\mathcal{A}})\longrightarrow 
      (\vec{\mathcal{K}}',
     \vec{\mathcal{A}}')$  
     is  a  {\it  directed   simplicial  map  of  pairs}.  
     By   definition,  
        morphisms  of  hyperdigraphs  and  directed  simplicial  maps  
        send  distinct  vertices to  distinct vertices  
        and  send  distinct  directed  hyperedges  to  distinct  directed   hyperedges.

   Both  the  infimum  chain  complexes  and  the  supremum  chain  complexes  
   are  functorial  with  respect  to  morphisms  of  hyperdigraphs.  
   Thus  the  embedded  homology 
      is  functorial  with  respect  to  morphisms  of  hyperdigraphs
      and  the  relative  embedded  homology  (\ref{eq-26.9.12.r1})
     is  functorial  with  respect  to  morphisms  of  hyperdigraph  pairs.

   Let   $\vec{\mathcal{H}}(-)=\{\vec{\mathcal{H}}(\textbf{t})\mid  \textbf{t}\in \Omega\} $  be  a  family  of  
   hyperdigraphs  such  that  $ \vec{\mathcal{H}}(\textbf{t})$ 
   is  a  hyperdigraph  on  $X(\mathbf{t})$  for  each  $\mathbf{t}\in \Omega$. 
Suppose  $\varphi(-,-)$  induces  a  family  of  morphisms  of  hyperdigraphs  
$\varphi(-,-)=\{\varphi(\textbf{s},\textbf{t}):  \vec{\mathcal{H}}(\textbf{s})\longrightarrow  \vec{\mathcal{H}}(\textbf{t})\mid 
  \textbf{s}\leq  \textbf{t}\}$  
such  that  $\varphi(\textbf{t},\textbf{t})$   is  the  identity  map  of  $\vec{\mathcal{H}}(\textbf{t})$  
for  any  $\textbf{t}\in\Omega$  
and  $\varphi(\textbf{s},\textbf{t})\circ  \varphi(\textbf{r},\textbf{s})=\varphi(\textbf{r},\textbf{t})$  for  any  
   $ \textbf{r},  \textbf{s},  \textbf{t}\in\Omega$   with  $\textbf{r}\leq   \textbf{s}\leq \textbf{t}$.  
Then     $\vec{\mathcal{H}}(-)$  and  $\varphi(-,-)$  give  a     {\it   (multi-)persistence  hyperdigraph} 
with  an  $n$-dimensional  parameter.   
We  say  that  $\vec{\mathcal{H}}(-)$  is  a  {\it  persistence  hyperdigraph}  on  $X(-)$.

  Let  $\vec{\mathcal{H}}(-)$  be   a   persistence  hyperdigraph   on  $X(-)$.   
Considering  the  infimum  chain  complexes  and  the  supremum  chain  complexes,
we  obtain  persistence  chain  complexes 
${\rm  Inf}(\vec{\mathcal{H}}(-))$  and 
${\rm  Sup}(\vec{\mathcal{H}}(-))$ 
with their  families  of  chain  maps  
$\varphi(-,-)$  given  by  $\{\varphi(\textbf{s},\textbf{t})_\#:  {\rm  Inf}(\vec{\mathcal{H}} (\textbf{s}))\longrightarrow  {\rm  Inf}(\vec{\mathcal{H}} (\textbf{t}))\mid 
   \textbf{s}\leq  \textbf{t}\}$
and   $ \{\varphi(\textbf{s},\textbf{t})_\#:  {\rm  Sup}(\vec{\mathcal{H}} (\textbf{s}))\longrightarrow  {\rm  Sup}(\vec{\mathcal{H}} (\textbf{t}))\mid 
  \textbf{s}\leq  \textbf{t}\}$.  
    By  applying  the  embedded  homology  functor,    
 we  obtain  a  family  of  homology groups 
\begin{eqnarray}\label{eq-26.9.12.1}
H_\bullet(\vec{\mathcal{H}}(-))=\{H_\bullet(\vec{\mathcal{H}}(\textbf{t}))\mid   \textbf{t}\in\Omega\} 
\end{eqnarray}
together  with  a  family  of  homomorphisms 
\begin{eqnarray}\label{eq-26.9.12.ph2}
\varphi(-,-)_*=\{\varphi(\textbf{s},\textbf{t})_*:  H_\bullet(\vec{\mathcal{H}}(\textbf{s}))\longrightarrow  H_\bullet(\vec{\mathcal{H}}(\textbf{t}))
\mid    \textbf{s}\leq\textbf{t}  \}.  
\end{eqnarray}
  The  {\it  (multi-)persistent  embedded  homology}
 of  $\vec{\mathcal{H}}(-)$  is  given  by  (\ref{eq-26.9.12.1})  and  (\ref{eq-26.9.12.ph2}).

\begin{definition}\label{def-26.9.12.1}
For  any   $\mathbf{t}\in\Omega$,   
\begin{enumerate}[(1)]
 
 \item
we  call  $ \mathbf{t}$  a   {\it     homological  regular  point}  of  $ \vec{\mathcal{H}}(-) $      
if  there  exists  an  open  neighborhood $U$  of  $\mathbf{t}$  in  $\Omega$  such  that 
$ \varphi(\mathbf{r}, \mathbf{s})_*$  is  an  isomorphism  of  the  embedded  homology  groups    for  any  
$\mathbf{r},\mathbf{s}\in  U$  with  $\mathbf{r}\leq  \mathbf{s}$.   We  call  $\mathbf{t}$    a  {\it   homological   critical  point} 
 of   $ \vec{\mathcal{H}}(-) $    
if  $\mathbf{t}$  is  not  a  homological   regular  point;  

\item
  we  call  $\mathbf{t}$  a      {\it   combinatorial-absolute
      homological  regular  point}  of  $  \vec{\mathcal{H}}(-) $
if  there  exists  an  open  neighborhood $U$  of  $\mathbf{t}$  in  $\Omega$  such  that 
 for  any  
$\mathbf{r},\mathbf{s}\in  U$  with   $\mathbf{r}\leq  \mathbf{s}$  and  any  sub-hyperdigraph
     $  \vec{\mathcal{A}}(\mathbf{r}) $  of  $ \vec{\mathcal{H}}(\mathbf{r})$,  
it  satisfies   that 
\begin{eqnarray*} 
 \varphi(\mathbf{r},\mathbf{s})_*:  H_\bullet ( \vec{\mathcal{H}}(\mathbf{r}),  \vec{\mathcal{A}}(\mathbf{r}) )
 \longrightarrow  H_\bullet( \vec{\mathcal{H}}(\mathbf{s}),  \vec{\mathcal{A}}(\mathbf{s}) )
 \end{eqnarray*}
 is  an  isomorphism  of  relative  embedded  homology  groups,
 where  $\vec{\mathcal{A}}(\mathbf{s})=
  \varphi(\mathbf{r},\mathbf{s})(  \vec{\mathcal{A}}(\mathbf{r}))$.    
  We  call  $\mathbf{t}$    a   {\it     combinatorial-absolute  homological   critical  point} 
 of   $ \vec{\mathcal{H}}(-) $    
if  $\mathbf{t}$  is  not  a     combinatorial-absolute   homological   regular  point;

\item
 we  call  $\mathbf{t}$  an     {\it   algebraic-absolute    homological  regular  point}  
 of  $  \vec{\mathcal{H}}(-) $
if  there  exists  an  open  neighborhood $U$  of  $\mathbf{t}$  in  $\Omega$  such  that 
 for  any  
$\mathbf{r},\mathbf{s}\in  U$  with   $\mathbf{r}\leq  \mathbf{s}$  and  any  graded  subgroup   
$G(\mathbf{r})$  of  
         $ \mathbb{Z}(\vec{\mathcal{H}}(\mathbf{r}))$,  
it  satisfies   that 
\footnote[1]{The  relative  embedded  homology  groups  in     (\ref{eq-26.9.12.iso99hh})  
is  defined  by  the  algebraic  version 
 of  the  relative  embedded  homology    \cite[Section~2.1]{jktr2022-2}.  }
\begin{eqnarray}\label{eq-26.9.12.iso99hh}
 \varphi(\mathbf{r},\mathbf{s})_*:  H_\bullet (  \mathbb{Z}(\vec{\mathcal{H}}(\mathbf{r})),  G(\mathbf{r}) )
 \longrightarrow  H_\bullet(  \mathbb{Z}(\vec{\mathcal{H}}(\mathbf{s})),  G(\mathbf{s}) )
 \end{eqnarray}
 is  an  isomorphism  of  relative  embedded  homology  groups,
 where  $G(\mathbf{s})=
  \varphi(\mathbf{r},\mathbf{s})_\#(  G(\mathbf{r}))$
  and   $ \varphi(\mathbf{r},\mathbf{s})_\#$  is  
  the  restriction  of  the  chain  map  $\Delta \varphi(\mathbf{r},\mathbf{s})_\#: 
  C_\bullet(\Delta\vec{\mathcal{H}}(\mathbf{r});\mathbb{Z})
  \longrightarrow   C_\bullet(\Delta\vec{\mathcal{H}}(\mathbf{s});\mathbb{Z})$     induced  by  
  the  directed  simplicial  map  
  $\Delta \varphi(\mathbf{r},\mathbf{s}):  \Delta\vec{\mathcal{H}}(\mathbf{r})
     \longrightarrow  \Delta\vec{\mathcal{H}}(\mathbf{s})$.     
  We  call  $\mathbf{t}$    an  {\it    algebraic-absolute  homological   critical  point} 
 of   $ \vec{\mathcal{H}}(-) $    
if  $\mathbf{t}$  is  not  an  algebraic-absolute   homological   regular  point.   
\end{enumerate}
\end{definition}

Let  ${\rm    Cr}^{H}(\vec{\mathcal{H}}(-))$,
${\rm     CACr}^{H}(\vec{\mathcal{H}}(-))$
  and  
  ${\rm    AACr}^{H}(\vec{\mathcal{H}}(-))$  respectively 
be  
the  set  of  all      homological   critical  points,    
the  set  of  all    combinatorial-absolute  homological   critical  points  
and   
 the  set  of  all  algebraic-absolute  homological   critical  points
 of  $\vec{\mathcal{H}}(-)$. 
 For  any  field  $\mathbb{F}$,  
let   ${\rm    Cr}^{H\mathbb{F}}(\vec{\mathcal{H}}(-))$,
${\rm     CACr}^{H\mathbb{F}}(\vec{\mathcal{H}}(-))$
  and  
  ${\rm    AACr}^{H\mathbb{F}}(\vec{\mathcal{H}}(-))$ 
respectively   be the  set  of  all      homological   critical  points,    
the  set  of  all    combinatorial-absolute  homological   critical  points  
and   
 the  set  of  all  algebraic-absolute  homological   critical  points
 of  $\vec{\mathcal{H}}(-)$
  where  the  coefficients of  all  the  homology  groups are from  $\mathbb{F}$.

 \begin{proposition}
 \label{pr-26.9.12.phdg1}
 Let  $\vec{\mathcal{H}}(-)$  be  a  persistence  hyperdigraph.
 Then    we  have  a  commutative  diagram
\begin{eqnarray}\label{eq-26.9.12.hdg.a1}
\xymatrix{
{\rm    Cr}^{H\mathbb{F}}(\vec{\mathcal{H}}(-))  
\ar[r] \ar[d]
& {\rm     CACr}^{H\mathbb{F}}(\vec{\mathcal{H}}(-))
 \ar[r] \ar[d]
&   {\rm    AACr}^{H\mathbb{F}}(\vec{\mathcal{H}}(-))
\ar[d]\\
{\rm    Cr}^{H }(\vec{\mathcal{H}}(-))  
\ar[r]  
& {\rm     CACr}^{H }(\vec{\mathcal{H}}(-))
 \ar[r]  
&   {\rm    AACr}^{H }(\vec{\mathcal{H}}(-))
}
\end{eqnarray}
where  all  the  maps  are  inclusions.  
 \end{proposition}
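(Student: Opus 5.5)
The plan is to prove the two horizontal chains of inclusions and the three vertical inclusions separately. Commutativity is then automatic, because all maps are inclusions of subsets of $\Omega$. Throughout, I argue with regular points: an inclusion ${\rm Cr}^{\ast}\subseteq{\rm Cr}^{\ast\ast}$ is equivalent to "every $\ast\ast$-regular point is $\ast$-regular". I use the same open neighborhood $U$ of $\mathbf{t}$ each time.

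\emph{Horizontal inclusions.}

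For ${\rm Cr}^{H}\subseteq{\rm CACr}^{H}$, let $\mathbf{t}$ be combinatorial-absolute regular and take $\vec{\mathcal{A}}(\mathbf{r})=\emptyset$ in Definition~\ref{def-26.9.12.1}~(2). Then $\vec{\mathcal{A}}(\mathbf{s})=\emptyset$ and ${\rm Inf}(\emptyset)=0$, so by (\ref{eq-26.9.12.r1}) the relative embedded homology reduces to $H_\bullet(\vec{\mathcal{H}}(\mathbf{r}))$. Hence $\varphi(\mathbf{r},\mathbf{s})_*$ is an isomorphism of embedded homology, i.e.\ $\mathbf{t}$ is homological regular.

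For ${\rm CACr}^{H}\subseteq{\rm AACr}^{H}$, let $\mathbf{t}$ be algebraic-absolute regular and take a sub-hyperdigraph $\vec{\mathcal{A}}(\mathbf{r})$. Put $G(\mathbf{r})=\mathbb{Z}(\vec{\mathcal{A}}(\mathbf{r}))$. A morphism of hyperdigraphs sends distinct directed hyperedges to distinct directed hyperedges. Therefore $\varphi(\mathbf{r},\mathbf{s})_\#(G(\mathbf{r}))=\mathbb{Z}(\varphi(\mathbf{r},\mathbf{s})(\vec{\mathcal{A}}(\mathbf{r})))=\mathbb{Z}(\vec{\mathcal{A}}(\mathbf{s}))$. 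By the construction in \cite[Section~2.1 and Section~3.1]{jktr2022-2}, the algebraic relative embedded homology of $(\mathbb{Z}(\vec{\mathcal{H}}),\mathbb{Z}(\vec{\mathcal{A}}))$ coincides, naturally in morphisms, with the combinatorial one (\ref{eq-26.9.12.r1}): both are $H_\bullet({\rm Inf}(\vec{\mathcal{H}})/{\rm Inf}(\vec{\mathcal{A}}))$. So (\ref{eq-26.9.12.iso99hh}) being an isomorphism gives the combinatorial condition.

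The same arguments work verbatim with coefficients in $\mathbb{F}$, using ${\rm Inf}(-)\otimes\mathbb{F}$. This gives both rows.

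\emph{Vertical inclusions.}

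These say that integral regularity implies $\mathbb{F}$-regularity. In each of the three cases the relevant map is $\theta=\varphi(\mathbf{r},\mathbf{s})_\#\colon P\to P'$, where $P={\rm Inf}(\mathbb{Z}(\vec{\mathcal{H}}(\mathbf{r})))/{\rm Inf}(G(\mathbf{r}))$. Here $G(\mathbf{r})=0$ in the first column and $G(\mathbf{r})=\mathbb{Z}(\vec{\mathcal{A}}(\mathbf{r}))$ in the second. The $\mathbb{F}$-version of the homology is $H_\bullet(P\otimes\mathbb{F})$. I would argue as in the proof of Lemma~\ref{le-26.9.9.1}, via the mapping cone.

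1. If $\theta_*$ is an integral isomorphism, then ${\rm Cone}(\theta)$ is acyclic.
2. If $P$ and $P'$ are degreewise free, then ${\rm Cone}(\theta)$ is a nonnegatively graded acyclic complex of free abelian groups, hence contractible.
3. A contractible complex stays contractible after tensoring, so ${\rm Cone}(\theta)\otimes\mathbb{F}={\rm Cone}(\theta\otimes\mathbb{F})$ is acyclic, and $(\theta\otimes\mathbb{F})_*$ is an isomorphism. Alternatively, one can invoke the algebraic Universal Coefficient Theorem as in Proposition~\ref{th-26.9.9.phc1}.

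\emph{Main obstacle: freeness of $P$.} The key step is to show that $P$ is degreewise free even though ${\rm Inf}(G)$ need not be spanned by hyperedges. Write $C=C_\bullet(\Delta\vec{\mathcal{H}}(\mathbf{r});\mathbb{Z})$. Following \cite[(2.2)]{h1},
\[
{\rm Inf}(G)_n=G_n\cap\partial_n^{-1}(G_{n-1}).
\]
Hence ${\rm Inf}(G)_n={\rm Inf}(\mathbb{Z}(\vec{\mathcal{H}}))_n\cap\ker\bigl(C_n\to C_n/G_n\oplus C_{n-1}/G_{n-1}\bigr)$, the map being $x\mapsto(\bar x,\overline{\partial x})$. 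So $P_n$ embeds in $C_n/G_n\oplus C_{n-1}/G_{n-1}$. When $G=\mathbb{Z}(\vec{\mathcal{A}})$ is spanned by basis elements, this target is free abelian, and a subgroup of a free abelian group is free.

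For the third column, where $G$ is an arbitrary graded subgroup of $\mathbb{Z}(\vec{\mathcal{H}})$, this argument fails because $C_n/G_n$ may have torsion, and I would first try a different route. If the algebraic-absolute condition holds for all graded subgroups, I would deduce the $\mathbb{F}$-statement by taking preimages of $\mathbb{F}$-subspaces under $\mathbb{Z}(\vec{\mathcal{H}})\to\mathbb{Z}(\vec{\mathcal{H}})\otimes\mathbb{F}$ (for $\mathbb{F}=\mathbb{Z}/p$), or $\mathbb{Z}$-lattices of them (for characteristic $0$). Making this reduction precise is where I expect the real work to be.
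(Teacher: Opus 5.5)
The horizontal inclusions are correct: take $\vec{\mathcal{A}}(\mathbf{r})=\emptyset$, respectively $G(\mathbf{r})=\mathbb{Z}(\vec{\mathcal{A}}(\mathbf{r}))$. The first two vertical inclusions are also correct, and all of this follows the paper's route. The paper simply cites the Universal Coefficient Theorem for all three columns; your check that $P$ is degreewise free is what makes that citation legitimate in the first two. The genuine gap is the third vertical inclusion ${\rm AACr}^{H\mathbb{F}}\subseteq{\rm AACr}^{H}$, which you leave open. There $P={\rm Inf}(\mathbb{Z}(\vec{\mathcal{H}}(\mathbf{r})))/{\rm Inf}(G(\mathbf{r}))$ really can have torsion: one vertex $(v)$ with $G_0=2\mathbb{Z}\cdot(v)$ gives $P=\mathbb{Z}/2$. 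So neither the UCT nor your cone argument applies as stated. Your suggested repair via preimages of $\mathbb{F}$-subspaces targets a different $\mathbb{F}$-version: subspaces of $\mathbb{Z}(\vec{\mathcal{H}})\otimes\mathbb{F}$, with infima computed over $\mathbb{F}$, rather than the $H_\bullet(P\otimes\mathbb{F})$ you used in the first two columns. Under that reading the vertical inclusions already fail in the first column. For instance, take the hyperdigraph to be empty for $t<t_0$ and the ten increasingly ordered triangles of the $6$-vertex $\mathbb{R}P^2$ for $t\geq t_0$. The integral infimum complex is $0$ throughout, but the mod-$2$ one acquires $\mathbb{Z}/2$ in degree $2$ at $t_0$.

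The missing idea is to study the cokernel of $\theta$ rather than $P$ itself.
\begin{enumerate}[(i)]
\item By the convention in Section~\ref{sec-4}, morphisms of hyperdigraphs are injective on vertices. So $\varphi(\mathbf{r},\mathbf{s})_\#$ is injective on $C_\bullet(\Delta\vec{\mathcal{H}}(\mathbf{r});\mathbb{Z})$.
\item From $\partial\varphi_\#x=\varphi_\#\partial x$ and this injectivity, ${\rm Inf}(\varphi_\#G)=\varphi_\#{\rm Inf}(G)$ for every graded subgroup $G$.
\item Hence $\theta\colon P\to P'$ is injective, with cokernel $Q={\rm Inf}(\mathbb{Z}(\vec{\mathcal{H}}(\mathbf{s})))/\varphi_\#{\rm Inf}(\mathbb{Z}(\vec{\mathcal{H}}(\mathbf{r})))$, which does not depend on $G$.
\item Because $\varphi_\#\mathbb{Z}(\vec{\mathcal{H}}(\mathbf{r}))$ is spanned by basis elements, your own embedding argument shows that $Q$ is degreewise free.
\item If $\theta_*$ is an integral isomorphism, the long exact sequence forces $Q$ to be acyclic, hence contractible.
\item Since $Q$ is free, $0\to P\otimes\mathbb{F}\to P'\otimes\mathbb{F}\to Q\otimes\mathbb{F}\to 0$ remains exact, and $Q\otimes\mathbb{F}$ is acyclic. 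So $(\theta\otimes\mathbb{F})_*$ is an isomorphism for every $G$, with no freeness of $P$ needed.
\end{enumerate}
The same sequence also shows that, for fixed $\mathbf{r}\leq\mathbf{s}$, every relative condition is equivalent to the one with $G=0$.
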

 
 \begin{proof}
 Let   $\mathbf{t}\in\Omega$.  
It  follows  from   Definition~\ref{def-26.9.12.1}   directly  that 
if    $\mathbf{t}$  is  a  an  algebraic-absolute    homological  
 regular  point,  then  $\mathbf{t}$  is  a    combinatorial-absolute    homological  
 regular  point;  and  if  $\mathbf{t}$  is  a    combinatorial-absolute    homological  
 regular  point, then  $\mathbf{t}$  is  a      homological  
 regular  point. 
 We  obtain  the  horizontal  inclusions  in  the  two  rows  of  (\ref{eq-26.9.12.hdg.a1}).  
 By  a  similar   argument  of   
 Proposition~\ref{pr-26.9.8.a1},   
 the  vertical  inclusions  in   (\ref{eq-26.9.12.hdg.a1})
  follow from  the  Universal  Coefficient  Theorem.  
 \end{proof}

  Let   $\vec{\mathcal{H}}(-)=\{\vec{\mathcal{H}}(\mathbf{t})\mid   \mathbf{t}\in\Omega\} $  
  be  a  persistence  hyperdigraph  on  $X(-)$ 
and  let   
$\vec{\mathcal{H}}'(-)=\{\vec{\mathcal{H}}'(\mathbf{t})\mid    \mathbf{t}\in\Omega\} $       
   be  a  persistence  hyperdigraph  on  $X'(-)$.  
   Suppose   $\vec{\mathcal{H}}(-)$  and  $\vec{\mathcal{H}}'(-)$  are  
 equipped  with  the  families  of  morphisms  
$\varphi(-,-)=\{\varphi(\mathbf{s},\mathbf{t}):  
\vec{\mathcal{H}}(\mathbf{s})\longrightarrow  \vec{\mathcal{H}}(\mathbf{t})\mid     
  \mathbf{s}\leq \mathbf{t} \}$  
and  $\varphi'(-,-)=\{\varphi'(\mathbf{s},\mathbf{t}): 
 \vec{\mathcal{H}}'(\mathbf{s})\longrightarrow  \vec{\mathcal{H}}'(\mathbf{t})\mid     
  \mathbf{s}\leq \mathbf{t}\}$  respectively,  
  where   $\varphi(-,-)$  and $\varphi'(-,-)$   are  induced  from 
   the  families  of  maps  of  $X(-)$  and  $X'(-)$.    
 A   {\it  persistence    morphism}  $f:  \vec{\mathcal{H}}(-)\longrightarrow  \vec{\mathcal{H}}'(-)$ 
 is  a   family  of    morphisms  $f(\mathbf{t}):  
 \vec{\mathcal{H}}(\mathbf{t})\longrightarrow  \vec{\mathcal{H}}'(\mathbf{t})$ 
  for  any  $\mathbf{t}\in\Omega$
 such  that   
 (\ref{eq-26.9.14.1})  is  satisfied 
 for  any  
 $\textbf{s}\leq  \textbf{t}$. 
  Let  $f:  \vec{\mathcal{H}}(-)\longrightarrow  \vec{\mathcal{H}}'(-)$  be a  persistence  morphism.  
  Applying  the  embedded  homology  functor,  
  we  obtain a  persistence  homomorphism 
  $f_*:  H_\bullet(\vec{\mathcal{H}}(-))\longrightarrow  H_\bullet(\vec{\mathcal{H}}'(-))$  between  
  the  persistent  embedded   homology  groups.

 \begin{definition}\label{def-26-9-12-rel1}
For  any  persistence  morphism  $f:  \vec{\mathcal{H}}(-)\longrightarrow  \vec{\mathcal{H}}'(-)$,  
 we  say  that  $\mathbf{t}\in \Omega$  is  a  {\it  homological  regular  point}  of  $f$  
 if  there exists  an  open  neighborhood  $U$  of  $\mathbf{t}$  in  $\Omega$  
 such that  for  any  $\mathbf{r}, \mathbf{s}\in   U$  with  $\mathbf{r}\leq \mathbf{s}$,  
 the  induced  homomorphism 
 \begin{eqnarray*} 
 \varphi'(\mathbf{r}, \mathbf{s})_*:  H_\bullet (\vec{\mathcal{H}}'(\mathbf{r}), f(\mathbf{r})(\vec{\mathcal{H}}(\mathbf{r})))
 \longrightarrow   H_\bullet (\vec{\mathcal{H}}'(\mathbf{s}), f(\mathbf{s})(\vec{\mathcal{H}}(\mathbf{s}))) 
 \end{eqnarray*}
 is  an  isomorphism. 
 We  call  $\mathbf{t}$    a   {\it       homological   critical  point} 
 of   $ f $    
if  $\mathbf{t}$  is  not  a    homological   regular  point. 
 \end{definition}

 \begin{proposition}\label{th-26.9.h9.12}
 Let  $f:  \vec{\mathcal{H}}(-)\longrightarrow  \vec{\mathcal{H}}'(-)$   be  a  persistence  
 morphism  between  hyperdigraphs.  
 Then  we  have a  commutative  diagram 
 \begin{eqnarray*} 
 \xymatrix{
  {\rm  CA  Cr}^{H\mathbb{F}}(\vec{\mathcal{H}}(-) )\ar[r]\ar[d]
  &{\rm C A  Cr}^H(\vec{\mathcal{H}}(-)) \ar[d]
 \\
   {\rm  CA  Cr}^{H\mathbb{F}}(\vec{\mathcal{H}}'(-) )\cup  {\rm  Cr}^{H\mathbb{F}} (f)\ar[r]
  &{\rm  C A  Cr}^H(\vec{\mathcal{H}}'(-))\cup  {\rm  Cr}^{H} (f) 
     }
 \end{eqnarray*}
 where  all  the  maps are  inclusions.  
 \end{proposition}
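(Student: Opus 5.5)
The horizontal inclusions are the first row of (\ref{eq-26.9.12.hdg.a1}) in Proposition~\ref{pr-26.9.12.phdg1}, applied to $\vec{\mathcal{H}}(-)$ and to $\vec{\mathcal{H}}'(-)$. The coefficient change acts separately on each summand of the union, so the bottom arrow also follows from Proposition~\ref{pr-26.9.12.phdg1} together with the same Universal Coefficient argument applied to the relative groups defining ${\rm Cr}^{H}(f)$. The real content is therefore the two vertical inclusions. I will argue the $\mathbb{Z}$ case; the $\mathbb{F}$ case is verbatim with coefficients in $\mathbb{F}$. I will prove the contrapositive: if $\mathbf{t}$ is both a combinatorial-absolute homological regular point of $\vec{\mathcal{H}}'(-)$ and a homological regular point of $f$, then it is a combinatorial-absolute homological regular point of $\vec{\mathcal{H}}(-)$. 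This mirrors Step~2 of the proof of Proposition~\ref{th-26.9.a9.12}.

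\textbf{Setup.} Take a common neighborhood $U$ on which both regularity conditions hold. Fix $\mathbf{r}\leq\mathbf{s}$ in $U$ and a sub-hyperdigraph $\vec{\mathcal{A}}(\mathbf{r})\subseteq\vec{\mathcal{H}}(\mathbf{r})$, and put $\vec{\mathcal{A}}(\mathbf{s})=\varphi(\mathbf{r},\mathbf{s})(\vec{\mathcal{A}}(\mathbf{r}))$.

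\textbf{Step 1.} Since $f(\mathbf{r})(\vec{\mathcal{A}}(\mathbf{r}))$ is a sub-hyperdigraph of $\vec{\mathcal{H}}'(\mathbf{r})$, combinatorial-absolute regularity of $\vec{\mathcal{H}}'(-)$ gives an isomorphism $H_\bullet(\vec{\mathcal{H}}'(\mathbf{r}),f(\vec{\mathcal{A}}(\mathbf{r})))\cong H_\bullet(\vec{\mathcal{H}}'(\mathbf{s}),f(\vec{\mathcal{A}}(\mathbf{s})))$. Here I use (\ref{eq-26.9.14.1}) to identify $\varphi'(\mathbf{r},\mathbf{s})(f(\vec{\mathcal{A}}(\mathbf{r})))$ with $f(\vec{\mathcal{A}}(\mathbf{s}))$.

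\textbf{Step 2.} Regularity of $f$ gives the analogous isomorphism for the pair $(\vec{\mathcal{H}}'(-),f(\vec{\mathcal{H}}(-)))$.

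\textbf{Step 3.} Via (\ref{eq-26.9.12.r1}), I pass to infimum chain complexes, with $C={\rm Inf}(\vec{\mathcal{H}}'(\mathbf{r}))$, $D={\rm Inf}(f(\vec{\mathcal{H}}(\mathbf{r})))$ and $E={\rm Inf}(f(\vec{\mathcal{A}}(\mathbf{r})))$, and the primed versions at $\mathbf{s}$. The inclusions $E\subseteq D\subseteq C$ hold because ${\rm Inf}$ is monotone in the hyperdigraph. Lemma~\ref{le-26.9.10.2} with $\theta=\varphi'(\mathbf{r},\mathbf{s})_\#$ then shows that $H_\bullet(D/E)\to H_\bullet(D'/E')$ is an isomorphism.

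\textbf{Step 4.} It remains to identify $H_\bullet(D/E)$ with $H_\bullet(\vec{\mathcal{H}}(\mathbf{r}),\vec{\mathcal{A}}(\mathbf{r}))$, naturally in the parameter. Here I use that morphisms of hyperdigraphs are injective on vertices. Then $f(\mathbf{r})$ is an isomorphism of hyperdigraphs from $\vec{\mathcal{H}}(\mathbf{r})$ onto its image $f(\vec{\mathcal{H}}(\mathbf{r}))$, and it induces an isomorphism of directed simplicial complexes $\Delta\vec{\mathcal{H}}(\mathbf{r})\cong\Delta f(\vec{\mathcal{H}}(\mathbf{r}))\subseteq\Delta\vec{\mathcal{H}}'(\mathbf{r})$. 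Hence $f_\#$ carries ${\rm Inf}(\vec{\mathcal{H}}(\mathbf{r}))$ and ${\rm Inf}(\vec{\mathcal{A}}(\mathbf{r}))$ isomorphically onto $D$ and $E$.

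The main subtlety is Step~4. The infimum complex ${\rm Inf}(f(\vec{\mathcal{H}}(\mathbf{r})))$ must be computed inside $C_\bullet(\Delta\vec{\mathcal{H}}'(\mathbf{r}))$, not inside $C_\bullet(\Delta f(\vec{\mathcal{H}}(\mathbf{r})))$. I will check from the explicit formula \cite[(2.2)]{h1} that it depends only on the hyperdigraph and the boundary formula, not on the ambient complex: it consists of the chains in $\mathbb{Z}(f(\vec{\mathcal{H}}))$ whose boundaries also lie in $\mathbb{Z}(f(\vec{\mathcal{H}}))$. This makes the identification canonical. Combined with commutativity (\ref{eq-26.9.14.1}), the isomorphism of Step~3 transports to the statement that $\varphi(\mathbf{r},\mathbf{s})_*$ on $H_\bullet(\vec{\mathcal{H}}(\mathbf{r}),\vec{\mathcal{A}}(\mathbf{r}))$ is an isomorphism, which finishes the proof.
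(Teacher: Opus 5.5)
Your proposal is correct and follows the paper's route. The paper likewise proves the vertical inclusions by the contrapositive of Step~2 of Proposition~\ref{th-26.9.a9.12}, using the three-pair lemma on infimum chain complexes (Lemma~\ref{le-26.9.h9.5}, whose proof is exactly your application of Lemma~\ref{le-26.9.10.2}) together with the vertex-injectivity of hyperdigraph morphisms, which you spell out more explicitly in Step~4. One small slip: the inclusion ${\rm CACr}^{H\mathbb{F}}\subseteq{\rm CACr}^{H}$ is the middle vertical arrow of (\ref{eq-26.9.12.hdg.a1}), not part of its first row.
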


In  order  to  prove  Proposition~\ref{th-26.9.h9.12},  
we  first  prove  the  next  lemma.  
 
 \begin{lemma}\label{le-26.9.h9.5} 
 Let  $\vec{\mathcal{H}}$  and  $\vec{\mathcal{H}}'$  be  
 hyperdigraphs  on  $X$  and  $X'$  respectively.  
 Let  $\vec{\mathcal{A}}\subseteq  \vec{\mathcal{B}}\subseteq   \vec{\mathcal{H}}$  
 and  $\vec{\mathcal{A}}'\subseteq  \vec{\mathcal{B}}'\subseteq   \vec{\mathcal{H}}'$
 be    sub-hyperdigraphs.  
 Suppose  there  is  a    morphism  $\theta:  \vec{\mathcal{H}}\longrightarrow  \vec{\mathcal{H}}'$
   such  that  
  $\theta(\vec{\mathcal{A}} )\subseteq  \vec{\mathcal{A}}'$  and  
  $\theta(\vec{\mathcal{B}})\subseteq  \vec{\mathcal{B}}'$.  
  If   both  $\theta_*:  H_\bullet(\vec{\mathcal{H}},\vec{\mathcal{A}})\longrightarrow  
  H_\bullet(\vec{\mathcal{H}}',\vec{\mathcal{A}}')$   
  and  $\theta_*:  H_\bullet(\vec{\mathcal{H}},\vec{\mathcal{B}})\longrightarrow
    H_\bullet(\vec{\mathcal{H}}',\vec{\mathcal{B}}')$ 
  are  isomorphisms  of  relative  embedded  homology  groups,  
  then  $\theta_*:  H_\bullet(\vec{\mathcal{B}},\vec{\mathcal{A}})\longrightarrow 
   H_\bullet(\vec{\mathcal{B}}',\vec{\mathcal{A}}')$
  is  an  isomorphism  of  relative  embedded  homology  groups. 
 \end{lemma}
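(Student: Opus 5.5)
The plan is to reduce Lemma~\ref{le-26.9.h9.5} to the purely algebraic Lemma~\ref{le-26.9.10.2}, using the infimum chain complexes and the description (\ref{eq-26.9.12.r1}) of relative embedded homology. Put $C=\mathrm{Inf}(\vec{\mathcal{H}})$, $D=\mathrm{Inf}(\vec{\mathcal{B}})$, $E=\mathrm{Inf}(\vec{\mathcal{A}})$, and likewise $C'$, $D'$, $E'$ for the primed hyperdigraphs.

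First, I check that these are nested sub-chain complexes of $C_\bullet(\Delta\vec{\mathcal{H}};\mathbb{Z})$. Since $\vec{\mathcal{A}}\subseteq\vec{\mathcal{B}}\subseteq\vec{\mathcal{H}}$, we have $\Delta\vec{\mathcal{A}}\subseteq\Delta\vec{\mathcal{B}}\subseteq\Delta\vec{\mathcal{H}}$ and $\mathbb{Z}(\vec{\mathcal{A}})\subseteq\mathbb{Z}(\vec{\mathcal{B}})\subseteq\mathbb{Z}(\vec{\mathcal{H}})$. The largest sub-chain complex of $C_\bullet(\Delta\vec{\mathcal{B}})$ contained in $\mathbb{Z}(\vec{\mathcal{B}})$ is then also the largest sub-chain complex of $C_\bullet(\Delta\vec{\mathcal{H}})$ contained in $\mathbb{Z}(\vec{\mathcal{B}})$. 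The reason is that the boundary of any chain supported on $\vec{\mathcal{B}}$ lies in $C_\bullet(\Delta\vec{\mathcal{B}})$. Consequently $E\subseteq D\subseteq C$ as sub-chain complexes, and the same holds on the primed side.

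Next, the morphism $\theta$ induces the directed simplicial map $\Delta\theta$ and hence the chain map $\Delta\theta_\#$. Because morphisms of hyperdigraphs send directed hyperedges to directed hyperedges injectively, $\Delta\theta_\#$ sends $\mathbb{Z}(\vec{\mathcal{H}})$ into $\mathbb{Z}(\vec{\mathcal{H}}')$. A sub-chain complex contained in $\mathbb{Z}(\vec{\mathcal{H}})$ is therefore mapped to a sub-chain complex contained in $\mathbb{Z}(\vec{\mathcal{H}}')$. This is the functoriality of $\mathrm{Inf}$ already noted in the paper. It gives a chain map $\theta:C\to C'$ with $\theta(D)\subseteq D'$ and $\theta(E)\subseteq E'$, using $\theta(\vec{\mathcal{B}})\subseteq\vec{\mathcal{B}}'$ and $\theta(\vec{\mathcal{A}})\subseteq\vec{\mathcal{A}}'$.

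By (\ref{eq-26.9.12.r1}), the hypotheses say exactly that $\theta_*:H_\bullet(C/E)\to H_\bullet(C'/E')$ and $\theta_*:H_\bullet(C/D)\to H_\bullet(C'/D')$ are isomorphisms. Lemma~\ref{le-26.9.10.2} then gives that $\theta_*:H_\bullet(D/E)\to H_\bullet(D'/E')$ is an isomorphism, and $H_\bullet(D/E)=H_\bullet(\vec{\mathcal{B}},\vec{\mathcal{A}})$ by (\ref{eq-26.9.12.r1}) applied to the pair $(\vec{\mathcal{B}},\vec{\mathcal{A}})$. Alternatively, one can copy the mapping-cone argument of Lemma~\ref{le-26.9.9.1}: the short exact sequence $0\to D/E\to C/E\to C/D\to0$ is natural in $\theta$, passing to mapping cones gives a short exact sequence of cones, and the long exact sequence kills the homology of the first cone.

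The main obstacle is the compatibility in the first step. We need that the relative embedded homology of $(\vec{\mathcal{B}},\vec{\mathcal{A}})$ computed inside $\Delta\vec{\mathcal{B}}$ agrees with the one computed inside $\Delta\vec{\mathcal{H}}$, so that a single ambient complex serves all three pairs. If this is troublesome for $\mathrm{Inf}$, I would instead use $\mathrm{Sup}$, which is the smallest sub-chain complex containing the given group and is plainly independent of the ambient complex. The isomorphism in (\ref{eq-26.9.12.r1}) lets us switch between the two descriptions.
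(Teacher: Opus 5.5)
Your proposal is correct and follows essentially the paper's route. The paper also passes to the nested infimum complexes $\mathrm{Inf}(\vec{\mathcal{A}})\subseteq\mathrm{Inf}(\vec{\mathcal{B}})\subseteq\mathrm{Inf}(\vec{\mathcal{H}})$ and forms the natural short exact sequence $0\to \mathrm{Inf}(\vec{\mathcal{B}})/\mathrm{Inf}(\vec{\mathcal{A}})\to\mathrm{Inf}(\vec{\mathcal{H}})/\mathrm{Inf}(\vec{\mathcal{A}})\to\mathrm{Inf}(\vec{\mathcal{H}})/\mathrm{Inf}(\vec{\mathcal{B}})\to 0$. It then runs the mapping-cone argument of Lemma~\ref{le-26.9.9.1}, which is equivalent to your appeal to Lemma~\ref{le-26.9.10.2}, and remarks that $\mathrm{Sup}$ works equally well. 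Your explicit checks that the $\mathrm{Inf}$ complexes do not depend on the ambient complex and that $\theta_\#$ carries $\mathrm{Inf}$ into $\mathrm{Inf}$ fill in details the paper leaves implicit.
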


 \begin{proof}
 The  commutative  diagram  of  hyperdigraphs
 \begin{eqnarray*}
 \xymatrix{
 \vec{\mathcal{A}} \ar[r]  \ar[d]_-{\theta}&\vec{\mathcal{B}} \ar[r]  \ar[d]_-{\theta}&\vec{\mathcal{H}}
  \ar[d]^-{\theta}\\
  \vec{\mathcal{A}}' \ar[r]  &\vec{\mathcal{B}}' \ar[r]  &\vec{\mathcal{H}}'
 }
 \end{eqnarray*}
 induces  a  commutative  diagram  of  chain  complexes 
 \begin{eqnarray*}
 \xymatrix{
 {\rm  Inf} (\vec{\mathcal{A}}) \ar[r]  \ar[d]_-{\theta}& {\rm  Inf} (\vec{\mathcal{B}}) \ar[r]  \ar[d]_-{\theta}&
  {\rm  Inf} (\vec{\mathcal{H}})
  \ar[d]^-{\theta}\\
   {\rm  Inf} (\vec{\mathcal{A}}') \ar[r]  & {\rm  Inf} (\vec{\mathcal{B}}')\ar[r]  &
    {\rm  Inf} (\vec{\mathcal{H}}'). 
 }
 \end{eqnarray*}
  Consider  the  induced  commutative  diagram   of  chain  complexes
 \begin{eqnarray}\label{eq-26.9.13.25}
 \xymatrix{
0\ar[r]& {\rm  Inf} (\vec{\mathcal{B}})/{\rm  Inf} (\vec{\mathcal{A}})\ar[r] \ar[d]_-{\theta} 
&  {\rm  Inf} (\vec{\mathcal{H}})/{\rm  Inf} (\vec{\mathcal{A}}) \ar[r] \ar[d]_-{\theta} 
& {\rm  Inf} (\vec{\mathcal{H}})/{\rm  Inf} (\vec{\mathcal{B}})\ar[d]^-{\theta}\ar[r]  &0\\
0\ar[r]& {\rm  Inf} (\vec{\mathcal{B}}')/ {\rm  Inf}(\vec{\mathcal{A}}')\ar[r]   
&  {\rm  Inf} (\vec{\mathcal{H}}')/{\rm  Inf} (\vec{\mathcal{A}}') \ar[r]  
& {\rm  Inf} (\vec{\mathcal{H}}')/{\rm  Inf} (\vec{\mathcal{B}}') \ar[r]  &0 
 }
 \end{eqnarray}
 where  each  row  is  a  short  exact  sequence.  
 Take  the  algebraic  mapping cones  of  the  vertical  maps  in  
 (\ref{eq-26.9.13.25}).  
 By  a  similar   argument  of   Lemma~\ref{le-26.9.9.1},
  $\theta_*:  H_\bullet({\rm  Inf}_\bullet(\vec{\mathcal{B}})/{\rm  Inf}_\bullet(\vec{\mathcal{A}}))
  \longrightarrow  H_\bullet({\rm  Inf}_\bullet(\vec{\mathcal{B}}')/ {\rm  Inf}_\bullet(\vec{\mathcal{A}}'))$
  is  an  isomorphism  and  consequently    
 $\theta_*: H_\bullet(\vec{\mathcal{B}},\vec{\mathcal{A}})\longrightarrow 
   H_\bullet(\vec{\mathcal{B}}',\vec{\mathcal{A}}')$
  is  an  isomorphism  between  the  relative  embedded  homology groups.  
  \end{proof}
 
\begin{remark}
In  the  proof  of  Lemma~\ref{le-26.9.h9.5}, 
an  alternative  proof  will  be  obtained  if  we  substitute  all  the  infimum  chain  complexes 
with  the  corresponding  supremum  chain  complexes.  
\end{remark}
  
   \begin{proof}[Proof of  Proposition~\ref{th-26.9.h9.12}]
   By  definition,   persistence  morphisms  between  
   persistence  hyperdigraphs  are  injective  between  the  sets  of  vertices   
   and  between  the  sets  of  directed  hyperedges.  
   The  proof  follows  from  a  similar  argument  of  Proposition~\ref{th-26.9.a9.12}
   by  substituting   Lemma~\ref{le-26.9.9.1}  with  Lemma~\ref{le-26.9.h9.5}.  
   \end{proof}
   
   \subsection{Persistent     homology of  hypergraphs  and   critical  points}

 An    {\it    $n$-hyperedge}  $ \sigma$   on  $X$ 
  is  a set  $\{v_0,v_1,\ldots,   v_n\}$  of    $n+1$  distinct   points  $v_0,  v_1,\ldots,  v_n$  in  $X$. 
  The  collection  of  all  possible      $n$-hyperedges      on  $X$ 
  is  the  unordered  configuration  space  
  ${\rm  Conf}_{n+1}(X)/\Sigma_{n+1}$
  and  the  collection  of  all  possible      hyperedges
  is  the   independence  complex  $ {\rm  Ind}(X)=\cup_{n\geq  0}{\rm  Conf}_{n+1}(X)/\Sigma_{n+1}$
    (cf.   \cite{reg-2026,ren-2026-b}).  
  A   {\it  hypergraph} 
  $ {\mathcal{H}}$  on  $X$  is  a  collection  of  certain    hyperedges  on  $X$,
  i.e.  ${\mathcal{H}}$  is   a  subset  of  $ {\rm  Ind}(X)$.  
   A  hypergraph  $\mathcal{H}$  is  called  a  {\it   simplicial  complex} 
  if  
   for  any  $ \sigma\in {\mathcal{H}}$  and  any   nonempty  subset   
 $ \tau$  of  $ \sigma$,  it  holds  that  
 $ \tau\in   {\mathcal{H}}$.  We  denote a    simplicial  complex  
 as  $ {\mathcal{K}}$.   
 We   denote  the  smallest    simplicial  complex  containing  $ {\mathcal{H}}$  
 as   $\Delta {\mathcal{H}}$ 
 (cf.  \cite{jktr2022-a}).

   The  canonical  projection  $\pi:  {\rm  Conf}_\bullet(X)\longrightarrow  {\rm  Conf}_\bullet(X)/\Sigma_\bullet$
 induces  a  canonical projection 
 $\pi:  \overrightarrow{\rm  Ind}(X)\longrightarrow  {\rm  Ind}(X)$.   
 We  say  that  a  hyperedge   $ \sigma$   on  $X$  is  the  {\it  underlying   
 hyperedge}  of  a directed   hyperedge  $\vec\sigma$  if  $ \sigma=\pi(\vec\sigma)$
  and say  that  a  hypergraph   $\mathcal{H}$   on  $X$  is  the  {\it  underlying   
 hypergraph}  of  a   hyperdigraph  $\vec{\mathcal{H}}$  if  
 $\mathcal{H}=\pi(\vec{\mathcal{H}})$,  i.e.  $\mathcal{H}$  consists  of  
 the   hyperedges  
   $\pi(\vec\sigma)$  for  all  $ \vec\sigma \in \vec{\mathcal{H}}$.

Given    hypergraphs   $ {\mathcal{H}}$  and  $ {\mathcal{H}}'$    on  $X$  
    and   $X'$  respectively,  
    a  {\it  morphism}  $\varphi:   {\mathcal{H}}\longrightarrow   {\mathcal{H}}'$   is
    given  by   a  map  $\varphi:  X\longrightarrow  X'$ 
    such  that  for  any    hyperedge  $ {\sigma}=\{v_0,v_1,\ldots, v_n\}$  of  
  $  {\mathcal{H}}$,  its  image  
   $\varphi( {\sigma})=\{\varphi(v_0),\varphi(v_1),\ldots, \varphi(v_n)\}$
   is  a   hyperedge  of  ${\mathcal{H}}$  after  removing  repeated  vertices.
   Moreover,  if  $\varphi$  sends  distinct  vertices  of  ${\mathcal{H}}$
   to  distinct  vertices  of  ${\mathcal{H}}'$  and  consequently 
   sends  distinct  hyperedges  of  ${\mathcal{H}}$  to  distinct  
    hyperedges  of  ${\mathcal{H}}'$,  then  we  say  that  $\varphi$  is  {\it  injective}.     
   Let  $ {\mathcal{A}}$  and  $ {\mathcal{A}}'$  be  sub-hypergraphs 
   of  ${\mathcal{H}}$  and  ${\mathcal{H}}'$    respectively. 
   A  {\it  morphism}  of pairs
     $\varphi:  ({\mathcal{H}}, {\mathcal{A}})\longrightarrow 
      ({\mathcal{H}}',
     {\mathcal{A}}')$
     is  a  morphism  $\varphi:   {\mathcal{H}}\longrightarrow  {\mathcal{H}}'$
     such  that  it  induces  a  morphism  $\varphi:   {\mathcal{A}}\longrightarrow  {\mathcal{A}}'$,
     i.e.  
     $\varphi({\mathcal{A}})\subseteq   {\mathcal{A}}'$.
   In  particular,  
  if  $\mathcal{H}=\mathcal{K}$  and  $\mathcal{H}'=\mathcal{K}'$  
  are   simplicial  complexes,  
  then  $\varphi:   {\mathcal{K}}\longrightarrow   {\mathcal{K}}'$
  is  a  {\it  simplicial  map};  
  moreover,   if  $ {\mathcal{A}}$  and  $ {\mathcal{A}}'$  are  
  simplicial  sub-complexes  of  $\mathcal{K}$  and  $ \mathcal{K}'$
  respectively,  
  then  $\varphi:  ({\mathcal{K}}, {\mathcal{A}})\longrightarrow 
      ({\mathcal{K}}',
     {\mathcal{A}}')$  
     is  a  {\it  simplicial  map  of  pairs}.

Throughout  this  paragraph,  
suppose  in  addition  that 
   there  is  a  topological  embedding   of  $X$  into  $\mathbb{R}$  thus 
 $X$  has  a   continuous  total  order  $\prec$.   
 Then  any  simplicial  complex  $\mathcal{K}$  on  $X$  has  an associated  chain  
 complex    $C_\bullet( {\mathcal{K}}; \mathbb{Z})$   
   with  the  boundary  map  
  given  by
 \begin{eqnarray*}
 \partial_n\{v_0,v_1,\ldots, v_n\}= \sum_{i=0}^n (-1)^i  \{v_0 ,\ldots,  \widehat{v_i},\ldots, v_n\},~~~~~~ 
 n\geq  0,    
 \end{eqnarray*}
 where  $v_0\prec  v_1\prec  \cdots\prec  v_n$.  
  Let  $\mathcal{H}$  be  a  hypergraph  on  $X$. 
  Let  $ \mathbb{Z}({\mathcal{H}})$  be  the  graded  free  abelian  group
    generated   by  all the    hyperedges 
 in  ${\mathcal{H}}$.  
  Let  ${\rm  Inf}({\mathcal{H}})$   be  the  largest  sub-chain  complex 
 of  $C_\bullet(\Delta {\mathcal{H}}; \mathbb{Z})$   contained  in  $ \mathbb{Z}({\mathcal{H}})$  
 and  let  ${\rm  Sup}({\mathcal{H}})$   be  the  smallest  sub-chain  complex 
 of  $C_\bullet(\Delta {\mathcal{H}}; \mathbb{Z})$   containing   $ \mathbb{Z}({\mathcal{H}})$. 
 By  \cite[Section~3]{h1},  the  canonical  inclusion  of  
    ${\rm  Inf}( {\mathcal{H}})$    into  ${\rm  Sup}( {\mathcal{H}})$
    is  a  quasi-isomorphism  and  thereby  the  embedded  homology  
    of  $ {\mathcal{H}}$  is  given  by  the  homology  of   ${\rm  Inf}( {\mathcal{H}})$   and
      ${\rm  Sup}( {\mathcal{H}})$. 
          Let  $ {\mathcal{A}}$  be  a  sub-hypergraph  of  $ {\mathcal{H}}$.  
      By    \cite[Section  2-3]{jktr2022-2}, 
      the  {\it  relative  embedded  homology}  of  the  pair  $( {\mathcal{H}},  {\mathcal{A}})$ 
      is  the  homology  of  the  quotient  infimum  chain  complex  and   the  
       quotient  supremum  chain  complex 
      \begin{eqnarray}\label{eq-26.9.12.r2}
      H_\bullet ( {\mathcal{H}},  {\mathcal{A}})
      = H_\bullet({\rm  Inf}( {\mathcal{H}})/{\rm  Inf}( {\mathcal{A}}))
      \cong  H_\bullet({\rm  Sup}( {\mathcal{H}})/{\rm  Sup}( {\mathcal{A}})). 
      \end{eqnarray}
   Let  $\vec{\mathcal{H}}$  be a  hyperdigraph  on  $X$ such  that
     ${\mathcal{H}}$  is  its  underlying  hypergraph.  
By  \cite[Proposition 3.7 and Theorem 3.9]{hdg},  
 there  is  a  commutative  diagram  of  chain  complexes 
 \begin{eqnarray*}
 \xymatrix{
{\rm  Inf}(\vec{\mathcal{H}}) \ar[r]\ar[d]_-{\pi}  & {\rm  Sup}(\vec{\mathcal{H}}) \ar[d]^-{\pi}\\
{\rm  Inf}( {\mathcal{H}}) \ar[r]  &{\rm  Sup}( {\mathcal{H}})
 }
 \end{eqnarray*}
 where  the  horizontal  maps   are 
 quasi-isomorphic  inclusions  and  
 the  vertical  maps  $\pi$   induce  a  homomorphism  of  the  embedded  homology 
 \begin{eqnarray*}
 \pi_*:  H_\bullet  (\vec{\mathcal{H}})\longrightarrow   H_\bullet  ({\mathcal{H}}). 
 \end{eqnarray*}
 Similarly,  let
 $(\vec{\mathcal{H}},  \vec{\mathcal{A}})$  be a  hyperdigraph  pair  on  $X$ such  that
   $({\mathcal{H}},  {\mathcal{A}})$  is  its  underlying  hypergraph  pair,  
   i.e.  ${\mathcal{H}}$  and   ${\mathcal{A}}$  are  the  underlying  hypergraphs
   of   $\vec{\mathcal{H}}$  and    $\vec{\mathcal{A}}$  respectively.
    Then  
  there  is  a  commutative  diagram  of  chain  complexes 
 \begin{eqnarray*}
 \xymatrix{
{\rm  Inf}(\vec{\mathcal{H}})/ {\rm  Inf}(\vec{\mathcal{A}})
 \ar[r]\ar[d]_-{\pi}  & {\rm  Sup}(\vec{\mathcal{H}})/ {\rm  Sup}(\vec{\mathcal{A}})  \ar[d]^-{\pi}\\
{\rm  Inf}( {\mathcal{H}})/{\rm  Inf}( {\mathcal{A}})
 \ar[r]  &{\rm  Sup}( {\mathcal{H}})/{\rm  Sup}( {\mathcal{A}})
 }
 \end{eqnarray*}
 where    the  horizontal  maps   are 
 quasi-isomorphisms  and  
 the  vertical  maps  $\pi$   induce  a  homomorphism  of  the  relative  embedded  homology 
 \begin{eqnarray*}
 \pi_*:  H_\bullet  (\vec{\mathcal{H}}, \vec{\mathcal{A}})
 \longrightarrow   H_\bullet  ({\mathcal{H}}, {\mathcal{A}}). 
 \end{eqnarray*}

   Both  the  infimum  chain  complexes  and  the  supremum  chain  complexes  
   are  functorial  with  respect  to  morphisms  of  hypergraphs.  
   Thus  the  embedded  homology 
      is  functorial  with  respect  to  morphisms  of  hypergraphs
      and  the  relative  embedded  homology  (\ref{eq-26.9.12.r2})
     is  functorial  with  respect  to  morphisms  of  hypergraph  pairs.  
     Precisely,  
     let  $X$  and  $X'$  be  topological  spaces  such  that  
     $e:  X\longrightarrow  \mathbb{R}$  and  $e':  X'\longrightarrow  \mathbb{R}$
     are  topological  embeddings.   
     let  $\mathcal{H}$  and  $\mathcal{H}'$  be  hypergraphs  on  $X$  and  $X'$  respectively.  
     Let   $\varphi:  \mathcal{H}\longrightarrow \mathcal{H}'$  be  
     a  morphism of  hypergraphs  given  by  a  map  $\varphi:  X\longrightarrow  X'$ 
     such  that  $e'\circ \varphi= e$.    
     Then  $\varphi$  induces  a  homomorphism  of  the  embedded  homology  groups 
     $\varphi:  H_\bullet(\mathcal{H})\longrightarrow  H_\bullet(\mathcal{H}')$.  
     Moreover,  let  $\mathcal{A}$  and  $\mathcal{A}'$  be  sub-hypergraphs  
     of    $\mathcal{H}$  and  $\mathcal{H}'$  respectively  such  that  
     $\varphi(\mathcal{A})\subseteq  \mathcal{A}'$.  
     Then  
     $\varphi$  induces  a  homomorphism  of  the  relative  embedded  homology  groups
   $\varphi:  H_\bullet(\mathcal{H},\mathcal{A})\longrightarrow  H_\bullet(\mathcal{H}',\mathcal{A}')$.

       Let   $ {\mathcal{H}}(-)=\{{\mathcal{H}}(\textbf{t})\mid  \textbf{t}\in \Omega\} $  be  a  family  of  
   hypergraphs  such  that  $ {\mathcal{H}}(\textbf{t})$ 
   is  a  hypergraph  on  $X(\mathbf{t})$  for  each  $\mathbf{t}\in \Omega$. 
Suppose  $\varphi(-,-)$  induces  a  family  of  morphisms  of  hypergraphs  
$\varphi(-,-)=\{\varphi(\textbf{s},\textbf{t}):  {\mathcal{H}}(\textbf{s})\longrightarrow  {\mathcal{H}}(\textbf{t})
\mid 
  \textbf{s}\leq  \textbf{t}\}$  
such  that  $\varphi(\textbf{t},\textbf{t})$   is  the  identity  map  of  ${\mathcal{H}}(\textbf{t})$  
for  any  $\textbf{t}\in\Omega$  
and  $\varphi(\textbf{s},\textbf{t})\circ  \varphi(\textbf{r},\textbf{s})=\varphi(\textbf{r},\textbf{t})$  for  any  
   $ \textbf{r},  \textbf{s},  \textbf{t}\in\Omega$   with  $\textbf{r}\leq   \textbf{s}\leq \textbf{t}$.  
Then     ${\mathcal{H}}(-)$  and  $\varphi(-,-)$  give  a     {\it   (multi-)persistence  hypergraph} 
with  an  $n$-dimensional  parameter.   
We  say  that  ${\mathcal{H}}(-)$  is  a  {\it  persistence  hypergraph}  on  $X(-)$.

A  {\it  persistence  topological  embedding}  of     
     $X(-)$  into  $\mathbb{R}$   is   a   family  of  topological embeddings 
     $e(-)=\{e(\mathbf{t}):  X(\mathbf{t})\longrightarrow  \mathbb{R}\mid  \mathbf{t}\in\Omega\}$
     such  that  $e(\mathbf{t}) \varphi(\mathbf{s},   \mathbf{t})=e(\mathbf{s}) $ 
      for  any  $\mathbf{s}\leq  \mathbf{t}$.   
      Suppose   $X(-)$    allows 
       a persistence  topological  embedding  $e(-)$  into  $\mathbb{R}$.  
 Then  we  have  
   persistence  chain  complexes 
${\rm  Inf}({\mathcal{H}}(-))$  and 
${\rm  Sup}({\mathcal{H}}(-))$ 
with their  families  of  chain  maps  
$\varphi(-,-)$  
given  by  
$\{\varphi(\textbf{s},\textbf{t})_\#:  {\rm  Inf}({\mathcal{H}} (\textbf{s}))\longrightarrow  {\rm  Inf}({\mathcal{H}} (\textbf{t}))\mid 
   \textbf{s}\leq  \textbf{t}\}$
and   $ \{\varphi(\textbf{s},\textbf{t})_\#:  {\rm  Sup}({\mathcal{H}} (\textbf{s}))\longrightarrow  {\rm  Sup}({\mathcal{H}} (\textbf{t}))\mid 
  \textbf{s}\leq  \textbf{t}\}$   respectively.    
    By  applying  the  embedded  homology  functor,    
 we  obtain  a  family  of  homology groups 
\begin{eqnarray}\label{eq-26.9.12.non1}
H_\bullet({\mathcal{H}}(-))=\{H_\bullet({\mathcal{H}}(\textbf{t}))\mid   \textbf{t}\in\Omega\} 
\end{eqnarray}
together  with  a  family  of  homomorphisms 
\begin{eqnarray}\label{eq-26.9.12.nonph2}
\varphi(-,-)_*=\{\varphi(\textbf{s},\textbf{t})_*:  H_\bullet( {\mathcal{H}}(\textbf{s}))\longrightarrow  H_\bullet( {\mathcal{H}}(\textbf{t})) \mid  
    \textbf{s}\leq\textbf{t}  \}.  
\end{eqnarray}
  The  {\it  (multi-)persistent  embedded  homology}
 of  $ {\mathcal{H}}(-)$  is  given  by  (\ref{eq-26.9.12.non1})  and  (\ref{eq-26.9.12.nonph2}).

       \begin{definition}\label{def-26.9.12.22} 
       Suppose  $X(-)$  allows  a  persistence  topological  embedding  into  $\mathbb{R}$. 
       Let  $\mathcal{H}(-)$  be  a  persistence  hypergraph  on  $X(-)$.  
\begin{enumerate}[(1)]
 
 \item
The   {\it     homological  regular/critical  point}  of  $ {\mathcal{H}}(-) $      
 is  defined  by  substituting  $\vec{\mathcal{H}}$  with  $\mathcal{H}$  in 
 Definition~\ref{def-26.9.12.1}~(1);  
 
  \item
The   {\it    combinatorial-absolute  homological  regular/critical  point}  of  $ {\mathcal{H}}(-) $      
 is  defined  by  substituting  $\vec{\mathcal{H}}$  with  $\mathcal{H}$ 
 and   substituting  $\vec{\mathcal{A}}$  with  $\mathcal{A}$  in 
 Definition~\ref{def-26.9.12.1}~(2);  
 
  \item
The   {\it   algebraic-absolute  homological  regular/critical  point}  of  $ {\mathcal{H}}(-) $      
 is  defined  by  substituting  $\vec{\mathcal{H}}$  with  $\mathcal{H}$  in 
 Definition~\ref{def-26.9.12.1}~(3).
 \end{enumerate}
  \end{definition}

  Let  ${\rm    Cr}^{H}( {\mathcal{H}}(-))$,
${\rm     CACr}^{H}( {\mathcal{H}}(-))$
  and  
  ${\rm    AACr}^{H}( {\mathcal{H}}(-))$  respectively 
be  
the  set  of  all      homological   critical  points,    
the  set  of  all    combinatorial-absolute  homological   critical  points  
and   
 the  set  of  all  algebraic-absolute  homological   critical  points
 of  $ {\mathcal{H}}(-)$. 
 Similar  notations  apply   for  
    ${\rm    Cr}^{H\mathbb{F}}( {\mathcal{H}}(-))$,
${\rm     CACr}^{H\mathbb{F}}( {\mathcal{H}}(-))$
  and  
  ${\rm    AACr}^{H\mathbb{F}}( {\mathcal{H}}(-))$  
  where  the  coefficients of  all  the  homology  groups are from  $\mathbb{F}$.

 \begin{proposition}
 \label{pr-26.9.13.phg1}
Suppose  $X(-)$  allows  a  persistence  topological  embedding  into  $\mathbb{R}$. 
       Let  $\mathcal{H}(-)$  be  a  persistence  hypergraph  on  $X(-)$.  
 Then    we  have  a  commutative  diagram
\begin{eqnarray*} 
\xymatrix{
{\rm    Cr}^{H\mathbb{F}}( {\mathcal{H}}(-))  
\ar[r] \ar[d]
& {\rm     CACr}^{H\mathbb{F}}( {\mathcal{H}}(-))
 \ar[r] \ar[d]
&   {\rm    AACr}^{H\mathbb{F}}( {\mathcal{H}}(-))
\ar[d]\\
{\rm    Cr}^{H }( {\mathcal{H}}(-))  
\ar[r]  
& {\rm     CACr}^{H }( {\mathcal{H}}(-))
 \ar[r]  
&   {\rm    AACr}^{H }( {\mathcal{H}}(-))
}
\end{eqnarray*}
where  all  the  maps  are  inclusions.  
 \end{proposition}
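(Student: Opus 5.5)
The argument follows the proof of Proposition~\ref{pr-26.9.12.phdg1}, working with regular points and passing to complements. Fix $\mathbf{t}\in\Omega$ and fix the persistence topological embedding $e(-)$ of $X(-)$ into $\mathbb{R}$. The total orders $\prec$ induced by $e(\mathbf{t})$ are compatible with $\varphi(\mathbf{s},\mathbf{t})$, because $e(\mathbf{t})\varphi(\mathbf{s},\mathbf{t})=e(\mathbf{s})$. Hence each $\varphi(\mathbf{s},\mathbf{t})$ is injective and order-preserving on vertices. It therefore induces chain maps on $C_\bullet(\Delta\mathcal{H}(-);\mathbb{Z})$, on ${\rm Inf}$ and ${\rm Sup}$, and on the quotients defining the relative embedded homology (\ref{eq-26.9.12.r2}). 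I will check this first, since it is what makes all three notions of regular point well defined and functorial. It is the only place the embedding hypothesis enters, and with orientations fixed by $\prec$ no sign issues arise.

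For the horizontal inclusions, I show that an algebraic-absolute homological regular point is combinatorial-absolute regular, and that a combinatorial-absolute regular point is homological regular; taking complements then gives the inclusions. For the first implication, let $U$ be the neighbourhood witnessing algebraic-absolute regularity. Given a sub-hypergraph $\mathcal{A}(\mathbf{r})\subseteq\mathcal{H}(\mathbf{r})$, take $G(\mathbf{r})=\mathbb{Z}(\mathcal{A}(\mathbf{r}))$. Since $\varphi(\mathbf{r},\mathbf{s})$ is injective and order-preserving on vertices, $\varphi(\mathbf{r},\mathbf{s})_\#(G(\mathbf{r}))=\mathbb{Z}(\mathcal{A}(\mathbf{s}))$ with $\mathcal{A}(\mathbf{s})=\varphi(\mathbf{r},\mathbf{s})(\mathcal{A}(\mathbf{r}))$. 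The algebraic relative embedded homology of \cite[Section~2.1]{jktr2022-2} for the pair $(\mathbb{Z}(\mathcal{H}),\mathbb{Z}(\mathcal{A}))$ coincides with the combinatorial one in (\ref{eq-26.9.12.r2}). Both are computed by ${\rm Inf}(\mathcal{H})/{\rm Inf}(\mathcal{A})$, because the infimum of $\mathbb{Z}(\mathcal{A})$ in $C_\bullet(\Delta\mathcal{H})$ equals ${\rm Inf}(\mathcal{A})$ computed in $C_\bullet(\Delta\mathcal{A})$. So the isomorphism in (\ref{eq-26.9.12.iso99hh}) is exactly the required combinatorial isomorphism. For the second implication, take $\mathcal{A}(\mathbf{r})=\emptyset$. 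Then $\mathcal{A}(\mathbf{s})=\emptyset$, and the relative embedded homology reduces to $H_\bullet(\mathcal{H}(\mathbf{r}))\to H_\bullet(\mathcal{H}(\mathbf{s}))$. This gives the same inclusions for $\mathbb{Z}$ and for $\mathbb{F}$ coefficients, hence both rows.

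For the vertical inclusions, I show that a point regular with $\mathbb{Z}$ coefficients, in each of the three senses, is regular with $\mathbb{F}$ coefficients. Every chain complex involved (${\rm Inf}(\mathcal{H})/{\rm Inf}(\mathcal{A})$, or the corresponding algebraic quotient for a graded subgroup $G$) is a quotient of subcomplexes of free complexes. I must check it is free in each degree. For ${\rm Inf}(\mathcal{H})/{\rm Inf}(\mathcal{A})$ this holds because ${\rm Inf}(\mathcal{A})={\rm Inf}(\mathcal{H})\cap\mathbb{Z}(\mathcal{A})$ is a direct summand spanned by a subset of the basis. For general $G$ I would pass to the ${\rm Sup}$ description, or note that the relevant homology is a quotient of subgroups of free groups. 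This yields torsion-free complexes, hence flat complexes, and the Künneth/Universal Coefficient Theorem applies.

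The Universal Coefficient Theorem is natural, and the five lemma applied to its short exact sequences shows that an integral homology isomorphism induces an $\mathbb{F}$ homology isomorphism. This is the same argument as for the vertical maps in Proposition~\ref{pr-26.9.8.a1}. Commutativity of the diagram is automatic, since all maps are inclusions of subsets of $\Omega$. The main obstacle I expect is the freeness or flatness check for the algebraic-absolute quotients with arbitrary graded subgroups $G$. If direct freeness fails, I would use the quasi-isomorphic ${\rm Inf}/{\rm Sup}$ models from \cite{jktr2022-2} and work with ${\rm Sup}(G)\subseteq C_\bullet(\Delta\mathcal{H})$. That pair consists of subcomplexes of a free complex, so both are free, and the mapping-cone argument used in the proof of Lemma~\ref{le-26.9.9.1} transfers acyclicity from $\mathbb{Z}$ to $\mathbb{F}$ coefficients.
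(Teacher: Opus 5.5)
Your route is the paper's: the horizontal inclusions come from specializing the test objects ($G(\mathbf{r})=\mathbb{Z}(\mathcal{A}(\mathbf{r}))$, then $\mathcal{A}(\mathbf{r})=\emptyset$), and the vertical ones from the Universal Coefficient Theorem. Your horizontal half is correct, including the check that the infimum of $\mathbb{Z}(\mathcal{A})$ taken in $C_\bullet(\Delta\mathcal{H})$ is ${\rm Inf}(\mathcal{A})$.

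The vertical half, however, rests on false claims. First, ${\rm Inf}(\mathcal{H})$ is not spanned by hyperedges: for the three edges of a triangle, ${\rm Inf}_1$ is spanned by the cycle. Also ${\rm Inf}(\mathcal{A})\neq{\rm Inf}(\mathcal{H})\cap\mathbb{Z}(\mathcal{A})$ in general: for $\mathcal{H}=\{\{a\},\{b\},\{a,b\}\}$ and $\mathcal{A}=\{\{a,b\}\}$ the left side is $0$ in degree $1$, while the right side is $\mathbb{Z}\{a,b\}$. What you need, and what is true, is torsion-freeness. The group $\mathbb{Z}(\mathcal{A})_n$ is spanned by part of the simplex basis, hence is pure in $C_n(\Delta\mathcal{H})$. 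So $kx\in{\rm Inf}(\mathcal{A})$ with $k\neq 0$ forces $x,\partial x\in\mathbb{Z}(\mathcal{A})$, i.e.\ $x\in{\rm Inf}(\mathcal{A})$. This repairs the ${\rm Cr}$ and ${\rm CACr}$ columns.

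The algebraic-absolute column is a genuine gap. For an arbitrary graded subgroup $G$, the quotient ${\rm Inf}(\mathbb{Z}(\mathcal{H}))/{\rm Inf}(G)$ can have torsion: $G=2\mathbb{Z}(\mathcal{H})$ gives ${\rm Inf}(G)=2\,{\rm Inf}(\mathcal{H})$. For such complexes an integral homology isomorphism need not survive $-\otimes\mathbb{F}$. For example, the projection of $\mathbb{Z}\stackrel{2}{\to}\mathbb{Z}$ (degrees $1,0$) onto $\mathbb{Z}/2$ (degree $0$) is a $\mathbb{Z}$-homology isomorphism but not an $\mathbb{F}_2$-homology isomorphism.

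Your fallback does not close this gap. ${\rm Sup}(G)$ and ${\rm Sup}(\mathbb{Z}(\mathcal{H}))$ are free, but their quotient is not. Tensoring the mapping cone of the inclusion with $\mathbb{F}$ computes a derived, Tor-corrected group. That is not the $\mathbb{F}$-relative group of Definition~\ref{def-26.9.12.22}~(3), whether that definition is read with integral $G$ or with graded subspaces of $\mathbb{F}(\mathcal{H}(\mathbf{r}))$. The paper's one-line appeal to the Universal Coefficient Theorem leaves this unaddressed too.

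Finally, state your coefficient convention. Every UCT step, yours and the paper's, needs $\mathbb{F}$-embedded homology to be computed from the integral ${\rm Inf}$-complexes tensored with $\mathbb{F}$, matching $C(-)\otimes\mathbb{F}$ in Section~\ref{sec-3}. If ${\rm Inf}$ is instead recomputed inside $C_\bullet(\Delta\mathcal{H};\mathbb{F})$, the vertical inclusions fail. Take the ten triangles of the six-vertex $\mathbb{R}P^2$ with vertices placed in $\mathbb{R}$. Then ${\rm Inf}_2=Z_2(\mathbb{R}P^2;\mathbb{Z})=0$, but its $\mathbb{F}_2$-analogue is $\mathbb{F}_2$. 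So the persistence hypergraph that is empty for $t<0$ and equals these triangles for $t\geq 0$ has $0\in{\rm Cr}^{H\mathbb{F}_2}\setminus{\rm Cr}^{H}$.
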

 
 \begin{proof}
 The  persistence  topological  embedding  of  $X(-)$   into  $\mathbb{R}$
 ensures  that  the  infimum/supremum  chain  complexes  
 as  well  as the  (relative)  embedded  homology  of  (pairs  of)  hypergraphs  on  $X(-)$  are  well-defined.  
 By  a  similar   argument  of   Proposition~\ref{pr-26.9.12.phdg1},
 the  proof  follows.  
 \end{proof}
 
 Let   $ {\mathcal{H}}(-)=\{ {\mathcal{H}}(\mathbf{t})\mid   \mathbf{t}\in\Omega\} $  
  be  a  persistence  hypergraph  on  $X(-)$ 
and  let    $ {\mathcal{H}}'(-)=\{ {\mathcal{H}}'(\mathbf{t})\mid    \mathbf{t}\in\Omega\} $       
   be  a  persistence  hypergraph  on  $X'(-)$.  
     Suppose   $ {\mathcal{H}}(-)$  and  $ {\mathcal{H}}'(-)$  are  
 equipped  with  the  families  of  morphisms  
$\varphi(-,-)=\{\varphi(\mathbf{s},\mathbf{t}):  
 {\mathcal{H}}(\mathbf{s})\longrightarrow   {\mathcal{H}}(\mathbf{t})\mid     
  \mathbf{s}\leq \mathbf{t} \}$  
and  $\varphi'(-,-)=\{\varphi'(\mathbf{s},\mathbf{t}): 
 {\mathcal{H}}'(\mathbf{s})\longrightarrow  {\mathcal{H}}'(\mathbf{t})\mid     
  \mathbf{s}\leq \mathbf{t}\}$  respectively,  
    where   $\varphi(-,-)$  and $\varphi'(-,-)$   are  induced  from 
   the  families  of  maps  of  $X(-)$  and  $X'(-)$.     
 A   {\it  persistence    morphism}  $f:  {\mathcal{H}}(-)\longrightarrow   {\mathcal{H}}'(-)$ 
 is  a   family  of    morphisms  $f(\mathbf{t}):  
  {\mathcal{H}}(\mathbf{t})\longrightarrow   {\mathcal{H}}'(\mathbf{t})$ 
  for  any  $\mathbf{t}\in\Omega$
 such  that  
 (\ref{eq-26.9.14.1})  is  satisfied  
 for  any  
 $\textbf{s}\leq  \textbf{t}$. 
 Let  $f:  {\mathcal{H}}(-)\longrightarrow   {\mathcal{H}}'(-)$  be a  persistence  morphism.  
  If   both   $X(-)$   and  $X'(-)$  allow  
       persistence  topological  embeddings  into  $\mathbb{R}$,   
  then  by 
  applying  the  embedded  homology  functor,  
  we  obtain a  persistence  homomorphism 
  $f_*:  H_\bullet( {\mathcal{H}}(-))\longrightarrow  H_\bullet( {\mathcal{H}}'(-))$  between  
  the  persistent  embedded   homology  groups.

 \begin{definition}\label{def-26-9-14-rel1}
 Suppose  both   $X(-)$   and  $X'(-)$  allow  
       persistence  topological  embeddings  into  $\mathbb{R}$.  
For  any  persistence  morphism  $f:   {\mathcal{H}}(-)\longrightarrow  {\mathcal{H}}'(-)$,  
 we  say  that  $\mathbf{t}\in \Omega$  is  a  {\it  homological  regular  point}  of  $f$  
 if  there exists  an  open  neighborhood  $U$  of  $\mathbf{t}$  in  $\Omega$  
 such that  for  any  $\mathbf{r}, \mathbf{s}\in   U$  with  $\mathbf{r}\leq \mathbf{s}$,  
 the  induced  homomorphism 
 \begin{eqnarray*} 
 \varphi'(\mathbf{r}, \mathbf{s})_*:  H_\bullet ( {\mathcal{H}}'(\mathbf{r}), f(\mathbf{r})( {\mathcal{H}}(\mathbf{r})))
 \longrightarrow   H_\bullet ( {\mathcal{H}}'(\mathbf{s}), f(\mathbf{s})( {\mathcal{H}}(\mathbf{s}))) 
 \end{eqnarray*}
 is  an  isomorphism. 
 We  call  $\mathbf{t}$    a   {\it       homological   critical  point} 
 of   $ f $    
if  $\mathbf{t}$  is  not  a   homological   regular  point. 
 \end{definition}

 \begin{proposition}\label{th-26.9.h14.12}
  Suppose  both   $X(-)$   and  $X'(-)$  allow  
       persistence  topological  embeddings  into  $\mathbb{R}$.  
 Let  $f:  {\mathcal{H}}(-)\longrightarrow   {\mathcal{H}}'(-)$   be  an     persistence  injective 
 morphism  between   persistence   hypergraphs.  
 Then  we  have a  commutative  diagram 
 \begin{eqnarray*} 
 \xymatrix{
  {\rm  CA  Cr}^{H\mathbb{F}}( {\mathcal{H}}(-) )\ar[r]\ar[d]
  &{\rm CA  Cr}^H( {\mathcal{H}}(-)) \ar[d]
 \\
   {\rm  CA  Cr}^{H\mathbb{F}}( {\mathcal{H}}'(-) )\cup  {\rm  Cr}^{H\mathbb{F}} (f)\ar[r]
  &{\rm  C A  Cr}^H( {\mathcal{H}}'(-))\cup  {\rm  Cr}^{H} (f) 
     }
 \end{eqnarray*}
 where  all  the  maps are  inclusions.  
 \end{proposition}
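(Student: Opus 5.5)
The plan mirrors the proofs of Proposition~\ref{th-26.9.a9.12} and Proposition~\ref{th-26.9.h9.12}, replacing hyperdigraphs by hypergraphs. The horizontal inclusions come from the universal coefficient argument already used for the vertical maps of Proposition~\ref{pr-26.9.13.phg1}. If the relative embedded homology maps over $\mathbb{Z}$ are isomorphisms on the quotients ${\rm Inf}(\mathcal{H}(\mathbf{r}))/{\rm Inf}(\mathcal{A}(\mathbf{r}))$, then so are the maps after tensoring with $\mathbb{F}$. These quotient chain complexes are free, since they are spanned by the hyperedges not in the subcomplex. Hence ${\rm CACr}^{H\mathbb{F}}\subseteq{\rm CACr}^{H}$ for both $\mathcal{H}(-)$ and $f$, and the union of inclusions gives the bottom arrow. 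Commutativity is automatic since all maps are inclusions.

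For the vertical inclusions, take $\mathbf{t}$ that is simultaneously a combinatorial-absolute homological regular point of $\mathcal{H}'(-)$ and a homological regular point of $f$, with a common neighborhood $U$. Fix $\mathbf{r}\leq\mathbf{s}$ in $U$ and a sub-hypergraph $\mathcal{A}(\mathbf{r})$ of $\mathcal{H}(\mathbf{r})$, and set $\mathcal{A}(\mathbf{s})=\varphi(\mathbf{r},\mathbf{s})(\mathcal{A}(\mathbf{r}))$. By (\ref{eq-26.9.14.1}), $\varphi'(\mathbf{r},\mathbf{s})$ sends $f(\mathbf{r})(\mathcal{A}(\mathbf{r}))$ onto $f(\mathbf{s})(\mathcal{A}(\mathbf{s}))$. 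So regularity of $\mathcal{H}'(-)$, applied to the sub-hypergraph $f(\mathbf{r})(\mathcal{A}(\mathbf{r}))$, gives an isomorphism $H_\bullet(\mathcal{H}'(\mathbf{r}),f(\mathcal{A}(\mathbf{r})))\to H_\bullet(\mathcal{H}'(\mathbf{s}),f(\mathcal{A}(\mathbf{s})))$. Regularity of $f$ gives the same for the pair $(\mathcal{H}'(-),f(\mathcal{H}(-)))$.

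Next I will apply the hypergraph analogue of Lemma~\ref{le-26.9.h9.5}. Set $\mathcal{A}=f(\mathcal{A}(\mathbf{r}))\subseteq\mathcal{B}=f(\mathcal{H}(\mathbf{r}))\subseteq\mathcal{H}'(\mathbf{r})$, and let $\theta=\varphi'(\mathbf{r},\mathbf{s})$. The lemma's proof transfers verbatim: the persistence embeddings $e'(-)$ into $\mathbb{R}$ make ${\rm Inf}$ functorial for $\theta$, because $e'(\mathbf{s})\varphi'(\mathbf{r},\mathbf{s})=e'(\mathbf{r})$ means the order-compatible signs are preserved. We then get the short exact sequences of quotient ${\rm Inf}$-complexes and compare mapping cones as in Lemma~\ref{le-26.9.9.1}. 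The conclusion is that $\varphi'(\mathbf{r},\mathbf{s})_*:H_\bullet(f(\mathcal{H}(\mathbf{r})),f(\mathcal{A}(\mathbf{r})))\to H_\bullet(f(\mathcal{H}(\mathbf{s})),f(\mathcal{A}(\mathbf{s})))$ is an isomorphism. The same argument works with $\mathbb{F}$-coefficients.

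The main obstacle is the final identification of this isomorphism with $\varphi(\mathbf{r},\mathbf{s})_*:H_\bullet(\mathcal{H}(\mathbf{r}),\mathcal{A}(\mathbf{r}))\to H_\bullet(\mathcal{H}(\mathbf{s}),\mathcal{A}(\mathbf{s}))$. This is where injectivity of $f$ is used. Since $f(\mathbf{t})$ is injective on vertices, it maps $\mathcal{H}(\mathbf{t})$ bijectively onto $f(\mathcal{H}(\mathbf{t}))$, hyperedges to hyperedges, and it also maps $\Delta\mathcal{H}(\mathbf{t})$ isomorphically onto $\Delta f(\mathcal{H}(\mathbf{t}))$. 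I must check that the induced map on $C_\bullet(\Delta-)$ is a chain isomorphism. Here I would use that $f(\mathbf{t})$ is compatible with the embeddings ($e'(\mathbf{t})f(\mathbf{t})$ order-compatible with $e(\mathbf{t})$), as in the functoriality paragraph preceding the persistence setup. If the orderings disagreed, the map would only be an isomorphism up to signs $\pm1$ on each simplex, and I would instead argue that it is still a chain isomorphism after twisting by these signs. Given this, the restriction is an isomorphism of ${\rm Inf}$-quotient complexes commuting with $\varphi$ and $\varphi'$ via (\ref{eq-26.9.14.1}). Therefore $\mathbf{t}$ is a combinatorial-absolute homological regular point of $\mathcal{H}(-)$, which gives the vertical inclusions.
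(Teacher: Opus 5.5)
Your proposal is correct and follows essentially the paper's route: the paper reduces to the template of Proposition~\ref{th-26.9.a9.12}, using universal coefficients for the $\mathbb{F}$-to-$\mathbb{Z}$ maps and the hypergraph version of Lemma~\ref{le-26.9.h9.5} applied to $f(\mathbf{r})(\mathcal{A}(\mathbf{r}))\subseteq f(\mathbf{r})(\mathcal{H}(\mathbf{r}))\subseteq\mathcal{H}'(\mathbf{r})$, with injectivity of $f$ identifying the result with $\varphi(\mathbf{r},\mathbf{s})_*$; you simply carry this out in more detail, including the sign issue. One justification needs correcting: ${\rm Inf}(\mathcal{H}(\mathbf{r}))/{\rm Inf}(\mathcal{A}(\mathbf{r}))$ is not spanned by hyperedges, but it is still free in each degree, because ${\rm Inf}(\mathcal{A})_n$ is the kernel of $x\mapsto([x],[\partial x])$ from ${\rm Inf}(\mathcal{H})_n$ into the free group $C_n(\Delta\mathcal{H})/\mathbb{Z}(\mathcal{A})_n\oplus C_{n-1}(\Delta\mathcal{H})/\mathbb{Z}(\mathcal{A})_{n-1}$, so the quotient embeds in a free abelian group.
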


    \begin{proof}
    The  proof  follows  from  a  similar  argument  of  
    Lemma~\ref{le-26.9.h9.5}  and   Proposition~\ref{th-26.9.h9.12}.   
    The  vertical  inclusions  follows  with  the  help  of  the  infectivity  of   $f$.  
    \end{proof}

         \section{Homological   critical  points  for   simplicial  complexes}\label{sec-5}

 In  this  section,  we  define  the  homological  critical  points  for  persistence  
 (directed)   simplicial  complexes  
 on  $X(-)$  by  the  usual  simplicial  homology.  
 By  considering the  (directed)   simplicial  sub-complexes, the 
 sub-hyper(di)graphs  and  the  sub-chain  complexes   of  the  
 chain  complexes  associated  to  the  (directed)   simplcial  complexes, 
 we  define  the  geometric-absolute   homological  critical  points,
 the  combinatorial-absolute   homological  critical  points
 and  the  algebraic-absolute   homological  critical  points
 respectively  for  persistence  (directed)   simplicial  complexes  
 on  $X(-)$.  
 We     prove     commutative  diagrams   of  canonical  inclusions   between  the 
sets  of  these   critical  points   for   persistence  (directed)   simplicial  complexes 
 in  Proposition~\ref{pr-26.9.14.psc1}  and  Proposition~\ref{pr-26.9.14.psc2}.  
 Moreover,  
 we  define  the   homological  critical  points  for      persistence   
 (directed)  simplicial  maps  
 between   persistence    (directed)   simplicial  complexes.  
 We  prove  
   commutative  diagrams   of  canonical  inclusions   between  the 
sets  of  the   critical  points  for    
 these  persistence   (directed)  simplicial  maps      in 
 Proposition~\ref{pr-26.9.14.sc.f}   and   Proposition~\ref{th-26.9.h14.55}.

     \subsection{Persistent  homology of  directed  simplicial  complexes  and   critical  points}

 Let   $\vec{\mathcal{K}}(-)=\{\vec{\mathcal{K}}(\textbf{t})\mid  \textbf{t}\in \Omega\} $  be  a  family  of  
    directed  simplicial  complexes  such  that  $ \vec{\mathcal{K}}(\textbf{t})$ 
   is  a   directed  simplicial  complex  on  $X(\mathbf{t})$  for  each  $\mathbf{t}\in \Omega$. 
Suppose  $\varphi(-,-)$  induces  a  family  of  directed  simplicial  maps    
$\varphi(-,-)=\{\varphi(\textbf{s},\textbf{t}):  \vec{\mathcal{K}}(\textbf{s})\longrightarrow  \vec{\mathcal{K}}(\textbf{t})\mid 
  \textbf{s}\leq  \textbf{t}\}$  
such  that  $\varphi(\textbf{t},\textbf{t})$   is  the  identity  map  of  $\vec{\mathcal{K}}(\textbf{t})$  
for  any  $\textbf{t}\in\Omega$  
and  $\varphi(\textbf{s},\textbf{t})\circ  \varphi(\textbf{r},\textbf{s})=\varphi(\textbf{r},\textbf{t})$  for  any  
   $ \textbf{r},  \textbf{s},  \textbf{t}\in\Omega$   with  $\textbf{r}\leq   \textbf{s}\leq \textbf{t}$.  
Then     $\vec{\mathcal{K}}(-)$  and  $\varphi(-,-)$  give  a     {\it   (multi-)persistence   directed  simplicial  complex} 
with  an  $n$-dimensional  parameter.   
We  say  that  $\vec{\mathcal{K}}(-)$  is  a  {\it  persistence   directed  simplicial  complex}  on  $X(-)$.

\begin{definition}\label{def-26.9.14.s5.1}
For  any   $\mathbf{t}\in\Omega$,   
\begin{enumerate}[(1)]
 
 \item
we  call  $ \mathbf{t}$  a   {\it     homological  regular  point}  of  $ \vec{\mathcal{K}}(-) $      
if  there  exists  an  open  neighborhood $U$  of  $\mathbf{t}$  in  $\Omega$  such  that 
$ \varphi(\mathbf{r}, \mathbf{s})_*:
H_\bullet(\vec{\mathcal{K}}(\textbf{s}))\longrightarrow  H_\bullet( \vec{\mathcal{K}}(\textbf{t}))
$  is  an  isomorphism  of  the     homology  groups    for  any  
$\mathbf{r},\mathbf{s}\in  U$  with  $\mathbf{r}\leq  \mathbf{s}$.   We  call  $\mathbf{t}$    a  {\it   homological   critical  point} 
 of   $ \vec{\mathcal{K}}(-) $    
if  $\mathbf{t}$  is  not  a  homological   regular  point;  

\item
  we  call  $\mathbf{t}$  a      {\it    geometric-absolute    homological  regular  point}  of  $  \vec{\mathcal{K}}(-) $
if  there  exists  an  open  neighborhood $U$  of  $\mathbf{t}$  in  $\Omega$  such  that 
 for  any  
$\mathbf{r},\mathbf{s}\in  U$  with   $\mathbf{r}\leq  \mathbf{s}$  and  any 
 directed 
simplicial  sub-complex
     $  \vec{\mathcal{A}}(\mathbf{r}) $  of  $ \vec{\mathcal{K}}(\mathbf{r})$,  
it  satisfies   that 
\begin{eqnarray*} 
 \varphi(\mathbf{r},\mathbf{s})_*:  H_\bullet ( \vec{\mathcal{K}}(\mathbf{r}),  \vec{\mathcal{A}}(\mathbf{r}) )
 \longrightarrow  H_\bullet( \vec{\mathcal{K}}(\mathbf{s}),  \vec{\mathcal{A}}(\mathbf{s}) )
 \end{eqnarray*}
 is  an  isomorphism  of  relative    homology  groups,
 where  $\vec{\mathcal{A}}(\mathbf{s})=
  \varphi(\mathbf{r},\mathbf{s})(  \vec{\mathcal{A}}(\mathbf{r}))$.    
  We  call  $\mathbf{t}$    a   {\it     geometric-absolute  homological   critical  point} 
 of   $ \vec{\mathcal{K}}(-) $    
if  $\mathbf{t}$  is  not  a     geometric-absolute   homological   regular  point;

\item
  we  call  $\mathbf{t}$  a      {\it    combinatorial-absolute    homological  regular  point}
    of  $  \vec{\mathcal{K}}(-) $
if  there  exists  an  open  neighborhood $U$  of  $\mathbf{t}$  in  $\Omega$  such  that 
 for  any  
$\mathbf{r},\mathbf{s}\in  U$  with   $\mathbf{r}\leq  \mathbf{s}$  and  any 
    sub-hyperdigraph  
     $  \vec{\mathcal{H}}(\mathbf{r}) $  of  $ \vec{\mathcal{K}}(\mathbf{r})$,  
it  satisfies   that 
\begin{eqnarray*} 
 \varphi(\mathbf{r},\mathbf{s})_*:  H_\bullet ( \vec{\mathcal{K}}(\mathbf{r}),  \vec{\mathcal{H}}(\mathbf{r}) )
 \longrightarrow  H_\bullet( \vec{\mathcal{K}}(\mathbf{s}),  \vec{\mathcal{H}}(\mathbf{s}) )
 \end{eqnarray*}
 is  an  isomorphism  of  relative  embedded   homology  groups,
 where  $\vec{\mathcal{H}}(\mathbf{s})=
  \varphi(\mathbf{r},\mathbf{s})(  \vec{\mathcal{H}}(\mathbf{r}))$.    
  We  call  $\mathbf{t}$    a   {\it     combinatorial-absolute  homological   critical  point} 
 of   $ \vec{\mathcal{K}}(-) $    
if  $\mathbf{t}$  is  not  a     combinatorial-absolute   homological   regular  point;

\item
 we  call  $\mathbf{t}$  an     {\it   algebraic-absolute    homological  regular  point}  
 of  $  \vec{\mathcal{K}}(-) $
if  there  exists  an  open  neighborhood $U$  of  $\mathbf{t}$  in  $\Omega$  such  that 
 for  any  
$\mathbf{r},\mathbf{s}\in  U$  with   $\mathbf{r}\leq  \mathbf{s}$  and  any  sub-chain  complex    
$D(\mathbf{r})$  of  
         $ C_\bullet(\vec{\mathcal{K}}(\mathbf{r}))$,  
it  satisfies   that 
\begin{eqnarray*} 
 \varphi(\mathbf{r},\mathbf{s})_*:  H_\bullet (  C_\bullet(\vec{\mathcal{K}}(\mathbf{r})),  D(\mathbf{r}) )
 \longrightarrow  H_\bullet(  C_\bullet(\vec{\mathcal{K}}(\mathbf{s})),  D(\mathbf{s}) )
 \end{eqnarray*}
 is  an  isomorphism  of  relative     homology  groups,
 where  $D(\mathbf{s})=
  \varphi(\mathbf{r},\mathbf{s})_\#(  D(\mathbf{r}))$.     
  We  call  $\mathbf{t}$    an  {\it    algebraic-absolute  homological   critical  point} 
 of   $ \vec{\mathcal{K}}(-) $    
if  $\mathbf{t}$  is  not  an  algebraic-absolute   homological   regular  point.   
\end{enumerate}
\end{definition}

Let  ${\rm    Cr}^{H}(\vec{\mathcal{K}}(-))$,
 ${\rm     GACr}^{H}(\vec{\mathcal{K}}(-))$,  
${\rm     CACr}^{H}(\vec{\mathcal{K}}(-))$
  and  
  ${\rm    AACr}^{H}(\vec{\mathcal{K}}(-))$  respectively 
be  
the  set  of  all      homological   critical  points,    
the  set  of  all    geometric-absolute  homological   critical  points,    
the  set  of  all    combinatorial-absolute  homological   critical  points  
and   
 the  set  of  all  algebraic-absolute  homological   critical  points
 of  $\vec{\mathcal{K}}(-)$. 
 For  any  field  $\mathbb{F}$,  
similar  notations  apply  for   ${\rm    Cr}^{H\mathbb{F}}(\vec{\mathcal{K}}(-))$,
 ${\rm     GACr}^{H\mathbb{F}}(\vec{\mathcal{K}}(-))$,  
${\rm     CACr}^{H\mathbb{F}}(\vec{\mathcal{K}}(-))$
  and  
  ${\rm    AACr}^{H\mathbb{F}}(\vec{\mathcal{K}}(-))$ 
  where  the  coefficients of  all  the  homology  groups are from  $\mathbb{F}$.

 \begin{proposition}
 \label{pr-26.9.14.psc1}
 Let  $\vec{\mathcal{K}}(-)$  be  a  persistence  directed  simplicial  complex.
 Then    we  have  a  commutative  diagram
\begin{eqnarray*} 
\xymatrix{
{\rm    Cr}^{H\mathbb{F}}(\vec{\mathcal{K}}(-))  
\ar[r] \ar[d]
& {\rm     GACr}^{H\mathbb{F}}(\vec{\mathcal{K}}(-))
 \ar[r] \ar[d]
& {\rm     CACr}^{H\mathbb{F}}(\vec{\mathcal{K}}(-))
 \ar[r] \ar[d]
&   {\rm    AACr}^{H\mathbb{F}}(\vec{\mathcal{K}}(-))
\ar[d]\\
{\rm    Cr}^{H }(\vec{\mathcal{K}}(-))  
\ar[r]   
& {\rm     GACr}^{H }(\vec{\mathcal{K}}(-))
 \ar[r]  
& {\rm     CACr}^{H }(\vec{\mathcal{K}}(-))
 \ar[r]  
&   {\rm    AACr}^{H }(\vec{\mathcal{K}}(-))
}
\end{eqnarray*}
where  all  the  maps  are  inclusions.  
 \end{proposition}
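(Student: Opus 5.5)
The plan is to prove each inclusion at the level of regular points. A horizontal arrow $\mathrm{X}\to\mathrm{Y}$ means: every $\mathbf{t}$ that is regular in the sense of $\mathrm{Y}$ is regular in the sense of $\mathrm{X}$. The vertical arrows come from the Universal Coefficient Theorem, exactly as in Proposition~\ref{pr-26.9.8.a1} and Proposition~\ref{pr-26.9.12.phdg1}. Commutativity is automatic, since all maps are inclusions of subsets of $\Omega$.

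\textbf{Horizontal inclusions.} Each one is a specialization argument: the stronger regularity condition, applied to one particular choice of relative pair, yields the weaker condition. In each step I keep the same neighborhood $U$ of $\mathbf{t}$ and compare pairs.
\begin{enumerate}[(i)]
\item \emph{Algebraic-absolute $\Rightarrow$ combinatorial-absolute.} Let $\vec{\mathcal{H}}(\mathbf{r})\subseteq\vec{\mathcal{K}}(\mathbf{r})$ be a sub-hyperdigraph. Since $\Delta\vec{\mathcal{K}}=\vec{\mathcal{K}}$, we have ${\rm Inf}(\vec{\mathcal{K}})={\rm Sup}(\vec{\mathcal{K}})=C_\bullet(\vec{\mathcal{K}})$. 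By (\ref{eq-26.9.12.r1}),
\[
H_\bullet(\vec{\mathcal{K}}(\mathbf{r}),\vec{\mathcal{H}}(\mathbf{r}))=H_\bullet\bigl(C_\bullet(\vec{\mathcal{K}}(\mathbf{r}))/D(\mathbf{r})\bigr),\qquad D(\mathbf{r})={\rm Inf}(\vec{\mathcal{H}}(\mathbf{r})),
\]
and this $D(\mathbf{r})$ is a sub-chain complex. I then need the functoriality identity
\[
\varphi(\mathbf{r},\mathbf{s})_\#\bigl({\rm Inf}(\vec{\mathcal{H}}(\mathbf{r}))\bigr)={\rm Inf}(\vec{\mathcal{H}}(\mathbf{s})),\qquad \vec{\mathcal{H}}(\mathbf{s})=\varphi(\mathbf{r},\mathbf{s})(\vec{\mathcal{H}}(\mathbf{r})).
\]
It holds because directed simplicial maps are injective on vertices and on simplices, so $\varphi_\#$ is an injective chain map identifying $C_\bullet(\Delta\vec{\mathcal{H}}(\mathbf{r}))$ with $C_\bullet(\Delta\vec{\mathcal{H}}(\mathbf{s}))$. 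It is the same identity invoked in the earlier proposition characterizing absolute regular points via ${\rm Inf}$ and ${\rm Sup}$. Granting it, algebraic-absolute regularity gives the isomorphism required for combinatorial-absolute regularity.
\item \emph{Combinatorial-absolute $\Rightarrow$ geometric-absolute.} A directed simplicial sub-complex $\vec{\mathcal{A}}$ is in particular a sub-hyperdigraph, and ${\rm Inf}(\vec{\mathcal{A}})=C_\bullet(\vec{\mathcal{A}})$, so the relative embedded homology is the ordinary relative homology.
\item \emph{Geometric-absolute $\Rightarrow$ homological.} Take $\vec{\mathcal{A}}(\mathbf{r})=\emptyset$, the empty sub-complex. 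Its image is again empty, and the relative homology reduces to $H_\bullet(\vec{\mathcal{K}}(\mathbf{r}))$.
\end{enumerate}

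\textbf{Vertical inclusions.} Here I must show that integral regularity of each type implies $\mathbb{F}$-regularity of the same type. All chain complexes involved are free: $C_\bullet(\vec{\mathcal{K}})/{\rm Inf}(\vec{\mathcal{H}})$ is free because ${\rm Inf}(\vec{\mathcal{H}})$ is spanned by a subset of the simplices. For a general sub-chain complex $D$, I would use instead that a chain map between complexes inducing isomorphisms on homology has an acyclic mapping cone. Tensoring that cone with $\mathbb{F}$ keeps it acyclic when the complexes are free, or more generally when the cone is a bounded-below complex of flat modules. For an arbitrary subcomplex $D$, however, the quotient $C/D$ need not be free.

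\textbf{Main obstacle.} The main obstacle is this last point: matching the $\mathbb{F}$-version of algebraic-absolute regularity, where $D$ ranges over sub-complexes of $C_\bullet\otimes\mathbb{F}$, against the integral version. I would first try to interpret the $\mathbb{F}$-definitions as in Section~\ref{sec-3}, namely applied to $C(-)\otimes\mathbb{F}$, and follow the paper's convention that the vertical arrows follow ``by the Universal Coefficient Theorem as in Proposition~\ref{pr-26.9.8.a1}''. For the first three columns, where all relevant quotients are free, the mapping-cone argument is rigorous.
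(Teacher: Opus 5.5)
Your horizontal inclusions are correct and follow the paper's route: specialize the stronger condition to $D(\mathbf{r})={\rm Inf}(\vec{\mathcal{H}}(\mathbf{r}))$, to a sub-complex, and to the empty sub-complex. Your vertical inclusions in the first two columns are also correct, since $C_\bullet(\vec{\mathcal{K}})$ and $C_\bullet(\vec{\mathcal{K}})/C_\bullet(\vec{\mathcal{A}})$ are free. The gap is in the vertical arrows of the ${\rm CACr}$ and ${\rm AACr}$ columns. You leave the ${\rm AACr}$ column open yourself, and your argument for the ${\rm CACr}$ column does not work as stated, for two reasons.

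\textbf{First problem.} ${\rm Inf}(\vec{\mathcal{H}})$ is not spanned by simplices. For $\vec{\mathcal{H}}=\{(a,b),(b,c),(a,c)\}$ one gets ${\rm Inf}_1(\vec{\mathcal{H}})=\mathbb{Z}\cdot\bigl((a,b)+(b,c)-(a,c)\bigr)$. So $C_\bullet/{\rm Inf}(\vec{\mathcal{H}})$ is only torsion-free. It is still flat, so this part is repairable.

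\textbf{Second problem, which is decisive.} With coefficients in $\mathbb{F}$, the relative embedded homology is built from the infimum complex computed inside $C_\bullet\otimes\mathbb{F}$, and ${\rm Inf}$ does not commute with $-\otimes\mathbb{F}$. Take the directed simplicial complex $\vec{\mathcal{K}}$ of increasing sequences in a vertex-ordered triangulation of $\mathbb{R}P^2$, and let $\vec{\mathcal{H}}$ be its set of $2$-simplices.
\begin{itemize}
\item Over $\mathbb{Z}$, ${\rm Inf}(\vec{\mathcal{H}})$ consists of the integral $2$-cycles, which are $0$. Hence $H_1(\vec{\mathcal{K}},\vec{\mathcal{H}})=\mathbb{Z}/2$ and $H_2(\vec{\mathcal{K}},\vec{\mathcal{H}})=0$.
\item Over $\mathbb{Z}/2$, the infimum complex contains the mod~$2$ fundamental cycle, and $H_2(\vec{\mathcal{K}},\vec{\mathcal{H}};\mathbb{Z}/2)=0$.
\item The universal coefficient formula would instead predict ${\rm Tor}(\mathbb{Z}/2,\mathbb{Z}/2)\neq 0$ in degree $2$.
\end{itemize}
So relative embedded homology satisfies no universal coefficient theorem. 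A mapping-cone argument on the integral quotients $C_\bullet/{\rm Inf}(\vec{\mathcal{H}})$ is therefore about the wrong groups. The paper's proof only cites the Universal Coefficient Theorem at this point, so it does not supply this step either.

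\textbf{The missing idea.} Use injectivity globally, not pair by pair.
\begin{itemize}
\item Since $\varphi=\varphi(\mathbf{r},\mathbf{s})$ is injective on simplices, $\varphi_\#$ embeds $C_\bullet(\vec{\mathcal{K}}(\mathbf{r}))$ in $C_\bullet(\vec{\mathcal{K}}(\mathbf{s}))$. Its cokernel $Q$ is free on the simplices outside the image.
\item Let $R$ be $\mathbb{Z}$ or $\mathbb{F}$. Each admissible choice at $\mathbf{r}$ is a sub-chain complex of $C_\bullet(\vec{\mathcal{K}}(\mathbf{r}))\otimes R$, an infimum complex over $R$ of a sub-hyperdigraph, or the chain complex of a sub-complex. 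In every case the corresponding object at $\mathbf{s}$ is its image under $\varphi_\#\otimes R$.
\item Therefore the induced map of quotient complexes is injective with cokernel $Q\otimes R$.
\item Consequently every absolute regularity condition over $R$ is equivalent to $H_\bullet(Q\otimes R)=0$ for all $\mathbf{r}\leq\mathbf{s}$ in $U$. This is exactly ordinary homological regularity over $R$, so each row of the diagram actually consists of equalities.
\end{itemize}
The vertical arrows then reduce to showing that $H_\bullet(Q)=0$ implies $H_\bullet(Q\otimes\mathbb{F})=0$. That is the Universal Coefficient Theorem applied to the free complex $Q$.
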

 
 \begin{proof}
The  proof  follows  from  a  similar  argument  of  Proposition~\ref{pr-26.9.12.phdg1}.   
 \end{proof}
 
 Let   $\vec{\mathcal{K}}(-)=\{\vec{\mathcal{K}}(\mathbf{t})\mid   \mathbf{t}\in\Omega\} $  
  be  a  persistence  directed  simplicial  complex   on  $X(-)$ 
and  let   $\vec{\mathcal{K}}'(-)=\{\vec{\mathcal{K}}'(\mathbf{t})\mid    \mathbf{t}\in\Omega\} $    be  
    a  persistence  directed  simplicial  complex  on  $X'(-)$.  
   Suppose   $\vec{\mathcal{K}}(-)$  and  $\vec{\mathcal{K}}'(-)$  are  
 equipped  with  the  families  of  morphisms  
$\varphi(-,-)=\{\varphi(\mathbf{s},\mathbf{t}):  
\vec{\mathcal{K}}(\mathbf{s})\longrightarrow  \vec{\mathcal{K}}(\mathbf{t})\mid     
  \mathbf{s}\leq \mathbf{t} \}$  
and  $\varphi'(-,-)=\{\varphi'(\mathbf{s},\mathbf{t}): 
 \vec{\mathcal{K}}'(\mathbf{s})\longrightarrow  \vec{\mathcal{K}}'(\mathbf{t})\mid     
  \mathbf{s}\leq \mathbf{t}\}$  respectively,  
  where    $\varphi(-,-)$  and $\varphi'(-,-)$   are  induced  from 
   the  families  of  maps  of  $X(-)$  and  $X'(-)$.   
 A   {\it  persistence  directed  simplicial  map}  
 $f:  \vec{\mathcal{K}}(-)\longrightarrow  \vec{\mathcal{K}}'(-)$ 
 is  a   family  of    directed  simplicial  maps  $f(\mathbf{t}):  
 \vec{\mathcal{K}}(\mathbf{t})\longrightarrow  \vec{\mathcal{K}}'(\mathbf{t})$ 
  for  any  $\mathbf{t}\in\Omega$
 such  that   
 (\ref{eq-26.9.14.1})  is  satisfied 
 for  any  
 $\textbf{s}\leq  \textbf{t}$. 
  Let  $f:  \vec{\mathcal{K}}(-)\longrightarrow  \vec{\mathcal{K}}'(-)$  be a  persistence  
   directed  simplicial  map.  
  Applying  the    homology  functor,  
  we  obtain a  persistence  homomorphism 
  $f_*:  H_\bullet(\vec{\mathcal{K}}(-))\longrightarrow  H_\bullet(\vec{\mathcal{K}}'(-))$.

 \begin{definition}\label{def-26-9-14-rel5}
For  any  persistence   directed  simplicial  map
  $f:  \vec{\mathcal{K}}(-)\longrightarrow  \vec{\mathcal{K}}'(-)$,  
 we  say  that  $\mathbf{t}\in \Omega$  is  a  {\it  homological  regular  point}  of  $f$  
 if  there exists  an  open  neighborhood  $U$  of  $\mathbf{t}$  in  $\Omega$  
 such that  for  any  $\mathbf{r}, \mathbf{s}\in   U$  with  $\mathbf{r}\leq \mathbf{s}$,  
 the  induced  homomorphism 
 \begin{eqnarray*} 
 \varphi'(\mathbf{r}, \mathbf{s})_*:  H_\bullet (\vec{\mathcal{K}}'(\mathbf{r}), f(\mathbf{r})(\vec{\mathcal{K}}(\mathbf{r})))
 \longrightarrow   H_\bullet (\vec{\mathcal{K}}'(\mathbf{s}), f(\mathbf{s})(\vec{\mathcal{K}}(\mathbf{s}))) 
 \end{eqnarray*}
 is  an  isomorphism. 
 We  call  $\mathbf{t}$    a   {\it       homological   critical  point} 
 of   $ f $    
if  $\mathbf{t}$  is  not  a    homological   regular  point. 
 \end{definition}
 
 \begin{proposition}\label{pr-26.9.14.sc.f}
 Let  $f:  \vec{\mathcal{K}}(-)\longrightarrow  \vec{\mathcal{K}}'(-)$   be  a  persistence  
 directed  simplicial  map between  persistence   directed  simplicial  complexes.  
 Then  we  have a  commutative  diagram 
 \begin{eqnarray*} 
 \xymatrix{
  {\rm  GA  Cr}^{H\mathbb{F}}(\vec{\mathcal{K}}(-) )\ar[r]\ar[d]
  &{\rm G A  Cr}^H(\vec{\mathcal{K}}(-)) \ar[d]
 \\
   {\rm  GA  Cr}^{H\mathbb{F}}(\vec{\mathcal{K}}'(-) )\cup  {\rm  Cr}^{H\mathbb{F}} (f)\ar[r]
  &{\rm  G A  Cr}^H(\vec{\mathcal{K}}'(-))\cup  {\rm  Cr}^{H} (f) 
     }
 \end{eqnarray*}
 where  all  the  maps are  inclusions.  
 \end{proposition}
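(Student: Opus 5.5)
The horizontal inclusions come directly from the first row of the diagram in Proposition~\ref{pr-26.9.14.psc1}: for each of $\vec{\mathcal{K}}(-)$ and $\vec{\mathcal{K}}'(-)$ we have ${\rm GACr}^{H\mathbb{F}}\subseteq{\rm GACr}^{H}$, which we apply termwise. We also use ${\rm Cr}^{H\mathbb{F}}(f)\subseteq{\rm Cr}^{H}(f)$, which follows from the Universal Coefficient Theorem exactly as in Proposition~\ref{pr-26.9.8.a1}. Here we use that the relative simplicial chain complex $C_\bullet(\vec{\mathcal{K}}'(\mathbf{r}))/C_\bullet(f(\mathbf{r})\vec{\mathcal{K}}(\mathbf{r}))$ is free, because $f(\mathbf{r})(\vec{\mathcal{K}}(\mathbf{r}))$ is a directed simplicial subcomplex. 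Commutativity is automatic, since all maps are inclusions. The real content is the two vertical inclusions. We prove them as in Step~2 of the proof of Proposition~\ref{th-26.9.a9.12}, replacing Lemma~\ref{le-26.9.9.1} by its chain-level version, Lemma~\ref{le-26.9.10.2}.

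Concretely, suppose $\mathbf{t}$ is a geometric-absolute homological regular point of $\vec{\mathcal{K}}'(-)$ and also a homological regular point of $f$. Choose a common neighborhood $U$ by intersecting the two neighborhoods. Fix $\mathbf{r}\leq\mathbf{s}$ in $U$ and a directed simplicial subcomplex $\vec{\mathcal{A}}(\mathbf{r})$ of $\vec{\mathcal{K}}(\mathbf{r})$. Since a directed simplicial map sends distinct vertices to distinct vertices, $f(\mathbf{r})$ identifies $\vec{\mathcal{K}}(\mathbf{r})$ with the directed simplicial subcomplex $f(\mathbf{r})(\vec{\mathcal{K}}(\mathbf{r}))$ of $\vec{\mathcal{K}}'(\mathbf{r})$. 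Likewise $f(\mathbf{r})(\vec{\mathcal{A}}(\mathbf{r}))$ is a subcomplex of it. By (\ref{eq-26.9.14.1}), $\varphi'(\mathbf{r},\mathbf{s})$ carries $f(\mathbf{r})(\vec{\mathcal{A}}(\mathbf{r}))$ onto $f(\mathbf{s})(\vec{\mathcal{A}}(\mathbf{s}))$, where $\vec{\mathcal{A}}(\mathbf{s})=\varphi(\mathbf{r},\mathbf{s})(\vec{\mathcal{A}}(\mathbf{r}))$.

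Apply Lemma~\ref{le-26.9.10.2} with the following choices: $\theta=\varphi'(\mathbf{r},\mathbf{s})_\#$, $C=C_\bullet(\vec{\mathcal{K}}'(\mathbf{r}))$, $D=C_\bullet(f(\mathbf{r})\vec{\mathcal{K}}(\mathbf{r}))$, and $E=C_\bullet(f(\mathbf{r})\vec{\mathcal{A}}(\mathbf{r}))$, with the primed complexes defined the same way at $\mathbf{s}$. The hypotheses of the lemma hold for two reasons. The map on $H_\bullet(C/D)$ is an isomorphism because $\mathbf{t}$ is regular for $f$. The map on $H_\bullet(C/E)$ is an isomorphism by geometric-absolute regularity of $\vec{\mathcal{K}}'(-)$, applied to the subcomplex $f(\mathbf{r})(\vec{\mathcal{A}}(\mathbf{r}))$. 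The lemma then gives an isomorphism on $H_\bullet(D/E)$. Transporting this back along the chain isomorphisms $f(\mathbf{r})_\#$ and $f(\mathbf{s})_\#$, and using (\ref{eq-26.9.14.1}) again to see that the square commutes, shows that $\varphi(\mathbf{r},\mathbf{s})_*:H_\bullet(\vec{\mathcal{K}}(\mathbf{r}),\vec{\mathcal{A}}(\mathbf{r}))\to H_\bullet(\vec{\mathcal{K}}(\mathbf{s}),\vec{\mathcal{A}}(\mathbf{s}))$ is an isomorphism. Hence $\mathbf{t}$ is a geometric-absolute regular point of $\vec{\mathcal{K}}(-)$. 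Taking complements gives ${\rm GACr}^{H}(\vec{\mathcal{K}}(-))\subseteq {\rm GACr}^H(\vec{\mathcal{K}}'(-))\cup{\rm Cr}^H(f)$. The same argument with coefficients in $\mathbb{F}$ gives the other vertical inclusion.

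The step I expect to need the most care is the identification $f(\mathbf{r})_\#:C_\bullet(\vec{\mathcal{K}}(\mathbf{r}))\cong C_\bullet(f(\mathbf{r})\vec{\mathcal{K}}(\mathbf{r}))$ as chain complexes, together with its compatibility with the subcomplexes. This is exactly where injectivity on vertices is used: distinct directed simplices go to distinct directed simplices, and faces go to faces with the same signs. A secondary point is checking that $f(\mathbf{s})\vec{\mathcal{A}}(\mathbf{s})=\varphi'(\mathbf{r},\mathbf{s})(f(\mathbf{r})\vec{\mathcal{A}}(\mathbf{r}))$, so that the subcomplex at $\mathbf{s}$ matches the one the definition requires. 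This follows directly from (\ref{eq-26.9.14.1}).
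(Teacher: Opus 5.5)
Your proposal is correct and follows the paper's own argument. The paper's proof points to Proposition~\ref{th-26.9.h9.12}, which is Step~2 of Proposition~\ref{th-26.9.a9.12} run with the three-term lemma; the paper cites that lemma as Lemma~\ref{le-26.9.h9.5}, which for directed simplicial complexes is just your chain-level Lemma~\ref{le-26.9.10.2}, and, like you, it relies on $f(\mathbf{t})$ being injective on vertices and simplices, which the paper takes as part of the definition of a directed simplicial map. One small slip: ${\rm GACr}^{H\mathbb{F}}\subseteq{\rm GACr}^{H}$ is a vertical arrow of the diagram in Proposition~\ref{pr-26.9.14.psc1}, not part of its first row.
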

 
 \begin{proof}
  The  persistence   directed  simplicial  maps  are  injective  by  definition.  
 The  proof  follows  from  a  similar  argument  of  Proposition~\ref{th-26.9.h9.12}.   
 \end{proof}

     \subsection{Persistent  homology of   simplicial  complexes  and   critical  points}

       Let   $ {\mathcal{K}}(-)=\{{\mathcal{K}}(\textbf{t})\mid  \textbf{t}\in \Omega\} $  be  a  family  of  
   simplicial  complexes  such  that  $ {\mathcal{K}}(\textbf{t})$ 
   is  a  simplicial  complex  on  $X(\mathbf{t})$  for  each  $\mathbf{t}\in \Omega$. 
Suppose  $\varphi(-,-)$  induces  a  family  of   simplicial  maps     
$\varphi(-,-)=\{\varphi(\textbf{s},\textbf{t}):  {\mathcal{K}}(\textbf{s})\longrightarrow  {\mathcal{K}}(\textbf{t})
\mid 
  \textbf{s}\leq  \textbf{t}\}$  
such  that  $\varphi(\textbf{t},\textbf{t})$   is  the  identity  map  of  ${\mathcal{K}}(\textbf{t})$  
for  any  $\textbf{t}\in\Omega$  
and  $\varphi(\textbf{s},\textbf{t})\circ  \varphi(\textbf{r},\textbf{s})=\varphi(\textbf{r},\textbf{t})$  for  any  
   $ \textbf{r},  \textbf{s},  \textbf{t}\in\Omega$   with  $\textbf{r}\leq   \textbf{s}\leq \textbf{t}$.  
Then     ${\mathcal{K}}(-)$  and  $\varphi(-,-)$  give  a     {\it   (multi-)persistence  simplicial complex} 
with  an  $n$-dimensional  parameter.   
We  say  that  ${\mathcal{K}}(-)$  is  a  {\it  persistence  simplicial  complex}  on  $X(-)$. 
       Suppose   $X(-)$    allows 
       a persistence  topological  embedding  $e(-)$  into  $\mathbb{R}$.  
 Then  we  have  a 
   persistence  chain  complex    
$C_\bullet({\mathcal{K}}(-);\mathbb{Z})$ 
with their  families  of  chain  maps  
$\varphi(-,-)$  
given  by  
$\{\varphi(\textbf{s},\textbf{t})_\#:  C_\bullet({\mathcal{K}} (\textbf{s});\mathbb{Z})\longrightarrow
 C_\bullet({\mathcal{K}} (\textbf{t});\mathbb{Z})\mid 
   \textbf{s}\leq  \textbf{t}\}$.  
    By  applying  the    homology  functor,    
 we  obtain  a  family  of  homology groups 
\begin{eqnarray}\label{eq-26.9.14.non5}
H_\bullet({\mathcal{K}}(-))=\{H_\bullet({\mathcal{K}}(\textbf{t}))\mid   \textbf{t}\in\Omega\} 
\end{eqnarray}
together  with  a  family  of  homomorphisms 
\begin{eqnarray}\label{eq-26.9.14.nonsc2}
\varphi(-,-)_*=\{\varphi(\textbf{s},\textbf{t})_*:  H_\bullet( {\mathcal{K}}(\textbf{s}))\longrightarrow  H_\bullet( {\mathcal{K}}(\textbf{t})) \mid  
    \textbf{s}\leq\textbf{t}  \}.  
\end{eqnarray}
  The  {\it  (multi-)persistent  embedded  homology}
 of  $ {\mathcal{K}}(-)$  is  given  by  (\ref{eq-26.9.14.non5})  and  (\ref{eq-26.9.14.nonsc2}).

       \begin{definition}\label{def-26.9.14.22} 
       Suppose  $X(-)$  allows  a  persistence  topological  embedding  into  $\mathbb{R}$. 
       Let  $\mathcal{K}(-)$  be  a  persistence  simplicial  complex   on  $X(-)$.  
\begin{enumerate}[(1)]
 
 \item
The   {\it     homological  regular/critical  point}  of  $ {\mathcal{K}}(-) $      
 is  defined  by  substituting  $\vec{\mathcal{K}}$  with  $\mathcal{K}$  in 
 Definition~\ref{def-26.9.14.s5.1}~(1);

  \item
The   {\it    geometric-absolute  homological  regular/critical  point}  of  $ {\mathcal{K}}(-) $      
 is  defined  by  substituting  $\vec{\mathcal{K}}$  with  $\mathcal{K}$ 
 and   substituting  a  directed  simplicial  sub-complex
 $\vec{\mathcal{A}}$   with  a  simplicial  sub-complex  $\mathcal{A}$  in 
 Definition~\ref{def-26.9.14.s5.1}~(2);
 
  \item
The   {\it    combinatorial-absolute  homological  regular/critical  point}  of  $ {\mathcal{K}}(-) $      
 is  defined  by  substituting  $\vec{\mathcal{K}}$  with  $\mathcal{K}$ 
 and   substituting  a  sub-hyperdigraph  $\vec{\mathcal{H}}$  with  
 a  sub-hypergraph  $\mathcal{H}$  in 
 Definition~\ref{def-26.9.14.s5.1}~(3);  
 
  \item
The   {\it   algebraic-absolute  homological  regular/critical  point}  of  $ {\mathcal{K}}(-) $      
 is  defined  by  substituting  $\vec{\mathcal{K}}$  with  $\mathcal{K}$  in 
 Definition~\ref{def-26.9.12.1}~(4).
 \end{enumerate}
  \end{definition}
 
  Let  ${\rm    Cr}^{H}( {\mathcal{K}}(-))$,
  ${\rm     GACr}^{H}( {\mathcal{K}}(-))$,  
${\rm     CACr}^{H}( {\mathcal{K}}(-))$
  and  
  ${\rm    AACr}^{H}( {\mathcal{K}}(-))$  respectively 
be  
the  set  of  all      homological   critical  points,    
the  set  of  all    geometric-absolute  homological   critical  points,   
the  set  of  all    combinatorial-absolute  homological   critical  points  
and   
 the  set  of  all  algebraic-absolute  homological   critical  points
 of  $ {\mathcal{K}}(-)$. 
 Similar  notations  apply   for  
    ${\rm    Cr}^{H\mathbb{F}}( {\mathcal{K}}(-))$,
    ${\rm     GACr}^{H\mathbb{F}}( {\mathcal{K}}(-))$,  
${\rm     CACr}^{H\mathbb{F}}( {\mathcal{K}}(-))$
  and  
  ${\rm    AACr}^{H\mathbb{F}}( {\mathcal{K}}(-))$  
  where  the  coefficients of  all  the  homology  groups are from  $\mathbb{F}$.

 \begin{proposition}
 \label{pr-26.9.14.psc2}
Suppose  $X(-)$  allows  a  persistence  topological  embedding  into  $\mathbb{R}$. 
       Let  $\mathcal{K}(-)$  be  a  persistence  simplicial  complex   on  $X(-)$.  
 Then    we  have  a  commutative  diagram
\begin{eqnarray*} 
\xymatrix{
{\rm    Cr}^{H\mathbb{F}}( {\mathcal{K}}(-))  
\ar[r] \ar[d]
& {\rm     GACr}^{H\mathbb{F}}( {\mathcal{K}}(-))
\ar[r] \ar[d]
& {\rm     CACr}^{H\mathbb{F}}( {\mathcal{K}}(-))
 \ar[r] \ar[d]
&   {\rm    AACr}^{H\mathbb{F}}( {\mathcal{K}}(-))
\ar[d]\\
{\rm    Cr}^{H }( {\mathcal{K}}(-))  
\ar[r]  
& {\rm     GACr}^{H }( {\mathcal{K}}(-))
\ar[r]  
& {\rm     CACr}^{H }( {\mathcal{K}}(-))
 \ar[r]  
&   {\rm    AACr}^{H }( {\mathcal{K}}(-))
}
\end{eqnarray*}
where  all  the  maps  are  inclusions.  
 \end{proposition}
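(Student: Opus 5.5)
The plan is to prove the diagram at the level of regular points: every inclusion of critical sets is the contrapositive of an implication between the corresponding regular-point conditions. First I will fix the persistence topological embedding $e(-)$ of $X(-)$ into $\mathbb{R}$. It induces a total order $\prec$ on each $X(\mathbf{t})$, and the relation $e(\mathbf{t})\varphi(\mathbf{s},\mathbf{t})=e(\mathbf{s})$ makes every $\varphi(\mathbf{s},\mathbf{t})$ order-preserving and injective on vertices. Consequently $C_\bullet(\mathcal{K}(-);\mathbb{Z})$ is a persistence chain complex. For any sub-hypergraph $\mathcal{H}$ or simplicial sub-complex $\mathcal{A}$ of $\mathcal{K}(\mathbf{r})$, the chain complexes ${\rm Inf}$, ${\rm Sup}$ and $C_\bullet(\mathcal{A})$ are carried by $\varphi(\mathbf{r},\mathbf{s})_\#$ onto the corresponding objects for the images in $\mathcal{K}(\mathbf{s})$. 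This is the same observation made in the proof of Proposition~\ref{pr-26.9.13.phg1}, and it is what makes all the relative homology groups in Definition~\ref{def-26.9.14.22} well defined and functorial.

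\emph{Horizontal inclusions.} I will show that each regularity notion implies the one to its left.

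\begin{itemize}
\item \textbf{Algebraic-absolute $\Rightarrow$ combinatorial-absolute.} Given a sub-hypergraph $\mathcal{H}(\mathbf{r})$, take $D(\mathbf{r})={\rm Inf}(\mathcal{H}(\mathbf{r}))$. Since $\Delta\mathcal{H}\subseteq\mathcal{K}$, this is a sub-chain complex of $C_\bullet(\mathcal{K}(\mathbf{r}))$. By the functoriality above, $D(\mathbf{s})=\varphi(\mathbf{r},\mathbf{s})_\#(D(\mathbf{r}))={\rm Inf}(\mathcal{H}(\mathbf{s}))$. The relative embedded homology $H_\bullet(\mathcal{K},\mathcal{H})$ is by (\ref{eq-26.9.12.r2}) the homology of ${\rm Inf}(\mathcal{K})/{\rm Inf}(\mathcal{H})=C_\bullet(\mathcal{K})/{\rm Inf}(\mathcal{H})$, because ${\rm Inf}(\mathcal{K})=C_\bullet(\mathcal{K})$ for a simplicial complex. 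So the isomorphism required in the combinatorial-absolute condition is an instance of the algebraic-absolute one.

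\item \textbf{Combinatorial-absolute $\Rightarrow$ geometric-absolute.} A simplicial sub-complex $\mathcal{A}$ is in particular a sub-hypergraph. Since ${\rm Inf}(\mathcal{A})=C_\bullet(\mathcal{A})$, the relative embedded homology of $(\mathcal{K},\mathcal{A})$ agrees with the simplicial relative homology, naturally in $\varphi$.

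\item \textbf{Geometric-absolute $\Rightarrow$ homological.} Take $\mathcal{A}(\mathbf{r})=\emptyset$; its image under $\varphi(\mathbf{r},\mathbf{s})$ is again empty, and the relative homology reduces to the absolute homology.
\end{itemize}

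In each case the same neighborhood $U$ works, so the regular sets are nested and the critical sets are included as drawn.

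\emph{Vertical inclusions.} I will repeat the argument of Proposition~\ref{pr-26.9.8.a1}. All relevant complexes, namely $C_\bullet(\mathcal{K})/D$, $C_\bullet(\mathcal{K})/{\rm Inf}(\mathcal{H})$ and $C_\bullet(\mathcal{K})/C_\bullet(\mathcal{A})$, are free abelian, since they are quotients of a free complex on a basis by a subcomplex spanned by part of that basis. The exception is the algebraic case with an arbitrary sub-chain complex $D$, where the quotient need not be free. If $\varphi(\mathbf{r},\mathbf{s})_\#$ induces an integral isomorphism on homology of free complexes, then its mapping cone is acyclic and free, hence contractible. Tensoring with $\mathbb{F}$ keeps it acyclic, so the map is an $\mathbb{F}$-isomorphism; this is the Universal Coefficient Theorem applied to the cone. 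Commutativity of the squares is automatic because all maps are inclusions of subsets of $\Omega$.

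The main obstacle is the algebraic-absolute column. There a sub-chain complex $D$ may have a non-free quotient $C/D$, and it is also unclear what the $\mathbb{F}$-version of "any sub-chain complex" means. I would interpret ${\rm AACr}^{H\mathbb{F}}$ as ranging over sub-chain complexes of $C_\bullet(\mathcal{K})\otimes\mathbb{F}$, exactly as in Proposition~\ref{pr-26.9.12.phdg1}. To handle this column, I would first try the two-out-of-three property (as in Lemma~\ref{le-26.9.10.2}): use the short exact sequences $0\to D\to C\to C/D\to 0$, where $D$ is a subgroup of a free group and hence free, and deduce that $\varphi_\#$ restricted to $D$ is an integral quasi-isomorphism. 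Applying the cone argument to $C$ and to $D$ separately then gives the $\mathbb{F}$-isomorphism on $C/D$. If that route fails, I would follow the paper and simply defer to the "similar argument" of Proposition~\ref{pr-26.9.12.phdg1}.
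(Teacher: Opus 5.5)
Your horizontal inclusions, and the vertical inclusions in the $\mathrm{Cr}$ and $\mathrm{GACr}$ columns, are correct and follow the paper's argument: specialize the test object to $\mathrm{Inf}(\mathcal{H})$, $\mathcal{A}$ or $\emptyset$, and apply the Universal Coefficient Theorem to the free complexes $C_\bullet(\mathcal{K})$ and $C_\bullet(\mathcal{K})/C_\bullet(\mathcal{A})$. The gap is in the vertical inclusions for the last two columns.

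For $\mathrm{AACr}$, your proposed route does not work. The map $D\otimes\mathbb{F}\to C\otimes\mathbb{F}$ need not be injective when $C/D$ has torsion (e.g.\ $D_0=2C_0$, $\mathbb{F}=\mathbb{Z}/2$). So the tensored sequences are not short exact, and there is nothing to run the five lemma on. Also, under your own reading, an $\mathbb{F}$-subcomplex of $C\otimes\mathbb{F}$ need not come from an integral one, so the integral hypothesis does not even apply to it.

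For $\mathrm{CACr}$ there are two problems. First, $\mathrm{Inf}(\mathcal{H})$ is not spanned by part of the basis: for $\mathcal{H}=\{\{a,b\},\{b,c\},\{a\},\{c\}\}$ one gets $\mathrm{Inf}_1=\mathbb{Z}(\{a,b\}+\{b,c\})$. The quotient is still torsion-free, so this alone is harmless. The more serious problem is that the $\mathbb{F}$-relative embedded homology is built from $\mathrm{Inf}(\mathcal{H};\mathbb{F})$, and this can be strictly larger than $\mathrm{Inf}(\mathcal{H};\mathbb{Z})\otimes\mathbb{F}$. If $\mathcal{H}$ is the set of ten triangles of the six-vertex $\mathbb{RP}^2$, then $\mathrm{Inf}_2(\mathcal{H};\mathbb{Z})=0$ but $\mathrm{Inf}_2(\mathcal{H};\mathbb{Z}/2)\cong\mathbb{Z}/2$, so embedded homology does not satisfy the naive UCT. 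Your cone argument on $C_\bullet(\mathcal{K})/\mathrm{Inf}(\mathcal{H};\mathbb{Z})$ therefore gives an isomorphism for the wrong complex. The paper's one-line appeal to the Universal Coefficient Theorem glosses over exactly these points, so deferring to it does not close the gap.

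The missing idea is the injectivity of the structure maps. Since $e(\mathbf{s})\varphi(\mathbf{r},\mathbf{s})=e(\mathbf{r})$, the chain map $\varphi(\mathbf{r},\mathbf{s})_\#$ is injective and sends simplices to simplices with sign $+1$. Let $Q=C_\bullet(\mathcal{K}(\mathbf{s}))/\varphi_\#C_\bullet(\mathcal{K}(\mathbf{r}))$, which is free on the simplices of $\mathcal{K}(\mathbf{s})$ not in $\varphi(\mathcal{K}(\mathbf{r}))$. Take $R=\mathbb{Z}$ or $\mathbb{F}$ and any subcomplex $D\subseteq C_\bullet(\mathcal{K}(\mathbf{r});R)$, for instance $0$, $C_\bullet(\mathcal{A};R)$, or $\mathrm{Inf}(\mathcal{H};R)$ (using $\varphi_\#\mathrm{Inf}(\mathcal{H};R)=\mathrm{Inf}(\varphi(\mathcal{H});R)$). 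Then there is a short exact sequence
\[
0\longrightarrow C_\bullet(\mathcal{K}(\mathbf{r});R)/D\xrightarrow{\ \varphi_\#\ }C_\bullet(\mathcal{K}(\mathbf{s});R)/\varphi_\#(D)\longrightarrow Q\otimes R\longrightarrow 0 .
\]
For an integral $D$ reduced mod $\mathbb{F}$, the sequence stays exact after $\otimes\mathbb{F}$ because $Q$ is free.

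By the long exact sequence, the condition at $(\mathbf{r},\mathbf{s})$ for the test object $D=0$ already forces $H_\bullet(Q\otimes R)=0$. Conversely, $H_\bullet(Q\otimes R)=0$ gives the isomorphism for every $D$. So each of the four regularity notions is equivalent to acyclicity of $Q\otimes R$ for all $\mathbf{r}\leq\mathbf{s}$ in a neighborhood. Since $Q$ is free, $H_\bullet(Q)=0$ implies $H_\bullet(Q\otimes\mathbb{F})=0$. This proves every vertical inclusion uniformly, and it also shows that the horizontal inclusions are in fact equalities.
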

 
 \begin{proof}
 The  proof  follows  
 from  a  similar  argument  of  Proposition~\ref{pr-26.9.13.phg1}.   
 \end{proof}

Let   $ {\mathcal{K}}(-)=\{ {\mathcal{K}}(\mathbf{t})\mid   \mathbf{t}\in\Omega\} $  
  be  a  persistence   simplicial  complex    on  $X(-)$ 
and  let   $ {\mathcal{K}}'(-)=\{ {\mathcal{K}}'(\mathbf{t})\mid    \mathbf{t}\in\Omega\} $    be  
    a  persistence  simplicial  complex   on  $X'(-)$.  
     Suppose   $ {\mathcal{K}}(-)$  and  $ {\mathcal{K}}'(-)$  are  
 equipped  with  the  families  of  simplicial  maps    
$\varphi(-,-)=\{\varphi(\mathbf{s},\mathbf{t}):  
 {\mathcal{K}}(\mathbf{s})\longrightarrow   {\mathcal{K}}(\mathbf{t})\mid     
  \mathbf{s}\leq \mathbf{t} \}$  
and  $\varphi'(-,-)=\{\varphi'(\mathbf{s},\mathbf{t}): 
 {\mathcal{K}}'(\mathbf{s})\longrightarrow  {\mathcal{K}}'(\mathbf{t})\mid     
  \mathbf{s}\leq \mathbf{t}\}$  respectively,  
  where    $\varphi(-,-)$  and $\varphi'(-,-)$   are  induced  from 
   the  families  of  maps  of  $X(-)$  and  $X'(-)$.     
 A   {\it  persistence    simplicial  map}  $f:  {\mathcal{K}}(-)\longrightarrow   {\mathcal{K}}'(-)$ 
 is  a   family  of    simplicial  maps  $f(\mathbf{t}):  
  {\mathcal{K}}(\mathbf{t})\longrightarrow   {\mathcal{K}}'(\mathbf{t})$ 
  for  any  $\mathbf{t}\in\Omega$
 such  that  
 (\ref{eq-26.9.14.1})  is  satisfied  
 for  any  
 $\textbf{s}\leq  \textbf{t}$. 
 Let  $f:  {\mathcal{K}}(-)\longrightarrow   {\mathcal{K}}'(-)$  be a  persistence  simplicial  map.  
  If   both   $X(-)$   and  $X'(-)$  allows 
       persistence  topological  embeddings  into  $\mathbb{R}$,   
  then  by 
  applying  the     homology  functor,  
  we  obtain a  persistence  homomorphism 
  $f_*:  H_\bullet( {\mathcal{K}}(-))\longrightarrow  H_\bullet( {\mathcal{K}}'(-))$.

 \begin{definition}\label{def-26-9-14-rel9}
 Suppose  both   $X(-)$   and  $X'(-)$  allow  
       persistence  topological  embeddings  into  $\mathbb{R}$.  
For  any  persistence  simplicial  map  $f:   {\mathcal{K}}(-)\longrightarrow  {\mathcal{K}}'(-)$,  
 we  say  that  $\mathbf{t}\in \Omega$  is  a  {\it  homological  regular  point}  of  $f$  
 if  there exists  an  open  neighborhood  $U$  of  $\mathbf{t}$  in  $\Omega$  
 such that  for  any  $\mathbf{r}, \mathbf{s}\in   U$  with  $\mathbf{r}\leq \mathbf{s}$,  
 the  induced  homomorphism 
 \begin{eqnarray*} 
 \varphi'(\mathbf{r}, \mathbf{s})_*:  H_\bullet ( {\mathcal{K}}'(\mathbf{r}), f(\mathbf{r})( {\mathcal{K}}(\mathbf{r})))
 \longrightarrow   H_\bullet ( {\mathcal{K}}'(\mathbf{s}), f(\mathbf{s})( {\mathcal{K}}(\mathbf{s}))) 
 \end{eqnarray*}
 is  an  isomorphism. 
 We  call  $\mathbf{t}$    a   {\it       homological   critical  point} 
 of   $ f $    
if  $\mathbf{t}$  is  not  a   homological   regular  point. 
 \end{definition}

 \begin{proposition}\label{th-26.9.h14.55}
  Suppose  both   $X(-)$   and  $X'(-)$  allow  
       persistence  topological  embeddings  into  $\mathbb{R}$.  
 Let  $f:  {\mathcal{K}}(-)\longrightarrow   {\mathcal{K}}'(-)$   be  a  persistence  
 injective 
  simplicial  map.  
 Then  we  have a  commutative  diagram 
 \begin{eqnarray*} 
 \xymatrix{
  {\rm   GA  Cr}^{H\mathbb{F}}( {\mathcal{K}}(-) )\ar[r]\ar[d]
  &{\rm  GA  Cr}^H( {\mathcal{K}}(-)) \ar[d]
 \\
   {\rm   GA  Cr}^{H\mathbb{F}}( {\mathcal{K}}'(-) )\cup  {\rm  Cr}^{H\mathbb{F}} (f)\ar[r]
  &{\rm  GA  Cr}^H( {\mathcal{K}}'(-))\cup  {\rm  Cr}^{H} (f) 
     }
 \end{eqnarray*}where  all  the  maps are  inclusions.  
 \end{proposition}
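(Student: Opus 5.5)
The plan follows the proof of Proposition~\ref{th-26.9.a9.12}, replacing triangulable subspaces by simplicial sub-complexes and Lemma~\ref{le-26.9.9.1} by a simplicial version of it. The horizontal inclusions need no new work. A geometric-absolute homological regular point with integral coefficients is also one with coefficients in $\mathbb{F}$, by the Universal Coefficient Theorem applied to the relative simplicial chain complexes, which are free. This gives ${\rm GACr}^{H\mathbb{F}}\subseteq{\rm GACr}^H$ on both rows, exactly as in Proposition~\ref{pr-26.9.14.psc2}. The same argument applies to the union with ${\rm Cr}^{\mathbb{F}}(f)\subseteq {\rm Cr}^H(f)$, applied to the relative chain complexes $C_\bullet(\mathcal{K}'(\mathbf{t}))/C_\bullet(f(\mathbf{t})\mathcal{K}(\mathbf{t}))$. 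Commutativity is automatic because all maps are inclusions.

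For the vertical inclusions, let $\mathbf{t}$ be a geometric-absolute homological regular point of $\mathcal{K}'(-)$ and a homological regular point of $f$. Choose one neighborhood $U$ that works for both. Fix $\mathbf{r}\leq\mathbf{s}$ in $U$ and a simplicial sub-complex $\mathcal{A}(\mathbf{r})\subseteq\mathcal{K}(\mathbf{r})$. Since $f(\mathbf{r})$ is injective, $f(\mathbf{r})(\mathcal{A}(\mathbf{r}))$ is a simplicial sub-complex of $\mathcal{K}'(\mathbf{r})$. Because $e'(\mathbf{r})\circ f(\mathbf{r})$ is compatible with the total orders, $f(\mathbf{r})$ induces an isomorphism of chain complexes $C_\bullet(\mathcal{A}(\mathbf{r}))\cong C_\bullet(f(\mathbf{r})\mathcal{A}(\mathbf{r}))$ without sign problems, and likewise for $\mathcal{K}(\mathbf{r})$. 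By (\ref{eq-26.9.14.1}), $\varphi'(\mathbf{r},\mathbf{s})(f(\mathbf{r})\mathcal{A}(\mathbf{r}))=f(\mathbf{s})(\mathcal{A}(\mathbf{s}))$ with $\mathcal{A}(\mathbf{s})=\varphi(\mathbf{r},\mathbf{s})(\mathcal{A}(\mathbf{r}))$. The geometric-absolute regularity of $\mathcal{K}'(-)$ then makes $\varphi'(\mathbf{r},\mathbf{s})_*$ an isomorphism on $H_\bullet(\mathcal{K}'(-),f(\mathcal{A}(-)))$, and the regularity of $f$ makes it an isomorphism on $H_\bullet(\mathcal{K}'(-),f(\mathcal{K}(-)))$.

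Next, apply Lemma~\ref{le-26.9.10.2} to the chain complexes $E=C_\bullet(f\mathcal{A}(\mathbf{r}))\subseteq D=C_\bullet(f\mathcal{K}(\mathbf{r}))\subseteq C=C_\bullet(\mathcal{K}'(\mathbf{r}))$, their counterparts at $\mathbf{s}$, and $\theta=\varphi'(\mathbf{r},\mathbf{s})_\#$. This shows that $\varphi'(\mathbf{r},\mathbf{s})_*$ is an isomorphism $H_\bullet(f\mathcal{K}(\mathbf{r}),f\mathcal{A}(\mathbf{r}))\to H_\bullet(f\mathcal{K}(\mathbf{s}),f\mathcal{A}(\mathbf{s}))$. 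Transporting back along the chain isomorphisms induced by the injective maps $f(\mathbf{r})$ and $f(\mathbf{s})$, and using $f(\mathbf{s})\varphi(\mathbf{r},\mathbf{s})=\varphi'(\mathbf{r},\mathbf{s})f(\mathbf{r})$, gives that $\varphi(\mathbf{r},\mathbf{s})_*:H_\bullet(\mathcal{K}(\mathbf{r}),\mathcal{A}(\mathbf{r}))\to H_\bullet(\mathcal{K}(\mathbf{s}),\mathcal{A}(\mathbf{s}))$ is an isomorphism. Hence $\mathbf{t}\notin{\rm GACr}^H(\mathcal{K}(-))$. The same argument with coefficients in $\mathbb{F}$ gives the left vertical inclusion.

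The main obstacle is the transport step, namely checking that an injective simplicial map really yields an isomorphism of chain complexes onto its image. Orientation signs are the concern here: the chain complexes are defined using orders $\prec$ coming from separate embeddings into $\mathbb{R}$, and a priori $f(\mathbf{r})$ need not preserve these orders. I would first try to use the compatibility $e'\circ\varphi=e$ built into the definition of morphisms inducing chain maps. Failing that, I would note that reordering vertices only multiplies each simplex by a sign $\pm1$, so $f_\#$ is still a chain isomorphism onto $C_\bullet(f\mathcal{K})$ after composing with the sign-correction isomorphism. Since that correction commutes with the boundary maps, the resulting isomorphisms in homology are unaffected.
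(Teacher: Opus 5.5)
Your proposal is correct and follows essentially the paper's route. The paper's proof reduces, via Propositions~\ref{th-26.9.h14.12} and \ref{th-26.9.h9.12}, to the argument of Proposition~\ref{th-26.9.a9.12}, which is what you do: combine geometric-absolute regularity of $\mathcal{K}'(-)$ on the subcomplexes $f(\mathcal{A}(-))$ with regularity of $f$, apply the triple lemma (your Lemma~\ref{le-26.9.10.2} is the chain-level form of Lemmas~\ref{le-26.9.9.1}/\ref{le-26.9.h9.5}), and transport back using injectivity of $f$. Your orientation concern, which the paper leaves implicit, is handled correctly by passing to oriented simplicial chains, with two small corrections: it is the signed pushforward $\sigma\mapsto\pm f(\mathbf{r})(\sigma)$ that is a chain map (an arbitrary sign flip need not commute with $\partial$), and your preliminary claim that $e'(\mathbf{r})\circ f(\mathbf{r})$ respects the orders is neither needed nor among the hypotheses.
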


    \begin{proof}
    The  proof  follows  from  a  similar  argument  of  
     Proposition~\ref{th-26.9.h14.12}.   
    \end{proof}

    \section{Some  examples  of   parametric  packings 
    and     parametric  coverings}\label{sec-6}

    In  this  section,  
    we  give   the   parametric    independence  complexes  
    for  parametric  packings  
    and  give  the  parametric   dominating  hypergraphs  
    for  parametric  coverings,  
    with the  help  of  the  parametric  configuration  spaces  
    in  Section~\ref{sec-2}.     
    We  construct  the     persistence simplicial  maps  
    between  the  parametric   independence  complexes  
    and  the     persistence  morphisms  between  
    the  parametric   dominating  hypergraphs,
      induced  from  bi-Lipschitz  maps  between  the  underlying  spaces.  
    As  examples  for  Section~\ref{sec-4}  and  Section~\ref{sec-5}, 
    we  derive  commutative  diagrams  
    for   the  homological  critical  points  
    for  the  parametric   independence  complexes  and  the  
    parametric   dominating  hypergraphs, 
     in   Examples~\ref{ex-26.9.15.1},   \ref{eq-26.9.20.a1} 
     and  \ref{ex-26.9.15.3}. 
    We  also   derive  commutative  diagrams  
    for   the  homological  critical  points  for  
    the     persistence simplicial  maps  
    between  the  parametric   independence  complexes  
    and  the     persistence  morphisms  between  
    the  parametric   dominating  hypergraphs,   in   Examples~\ref{ex-26-9-19-5.6}
  and    \ref{ex-26-9-19-5.8}.

    \begin{example}\label{ex-26.9.15.1}
    We  use  the  notations  in  Example~\ref{ex-26.9.13.1}.  
    For  any $\mathbf{t}=(-t_1,t_2) $   in  $\Omega$,  
let   (cf.  \cite{reg-2026,ren-2026-b}) 
\begin{eqnarray}\label{eq-26.9.15-1}
\overrightarrow {\rm  Ind} (M, \mathbf{t})&=&\cup_{n\geq  0}
{\rm  Conf}_{n+1}(M, \mathbf{t}),\\
   {\rm  Ind} (M, \mathbf{t})&=&\cup_{n\geq  0}
{\rm  Conf}_{n+1}(M, \mathbf{t})/\Sigma_{n+1}. 
\label{eq-26.9.15-2}
\end{eqnarray}
Then   for  any  $\mathbf{s}\leq  \mathbf{t}$,    the  diagram  commutes 
\begin{eqnarray}\label{eq-26.9.15-3}
\xymatrix{
\overrightarrow{\rm  Ind} (M, \mathbf{s})\ar[r]\ar[d]_-{\pi}
& \overrightarrow {\rm  Ind} (M, \mathbf{t})\ar[d]^-{\pi}\\
{\rm  Ind} (M, \mathbf{s})\ar[r]
&  {\rm  Ind} (M, \mathbf{t}) 
}
\end{eqnarray} 
where  the  horizontal  maps  are  canonical  inclusions
and  the  vertical  maps  are  canonical  projections.  
By (\ref{eq-26.9.15-1})  --  (\ref{eq-26.9.15-3}),  
we  have   a   persistence  directed  simplicial  complex
(which  will  be  called  the  {\it  parametric   independence   directed  complex})
\begin{eqnarray*}
\overrightarrow {\rm  Ind} (M, -)=\{\overrightarrow {\rm  Ind} (M, \mathbf{t})\mid  \mathbf{t}\in\Omega\}
\end{eqnarray*}
and  
 a   persistence  simplicial  complex
 (which  will  be  called  the  {\it  parametric   independence     complex})
\begin{eqnarray*}
  {\rm  Ind} (M, -)=\{  {\rm  Ind} (M, \mathbf{t})\mid  \mathbf{t}\in\Omega\}
\end{eqnarray*}
together  with  a  persistence  projection 
\begin{eqnarray*}
\pi:  \overrightarrow {\rm  Ind} (M, -)\longrightarrow    {\rm  Ind} (M, -).  
\end{eqnarray*}
By  Proposition~\ref{pr-26.9.14.psc1},
 we  have  a  commutative  diagram 
 \begin{eqnarray*} 
\xymatrix{
{\rm    Cr}^{H\mathbb{F}}(\overrightarrow {\rm  Ind} (M, -))  
\ar[r] \ar[d]
& {\rm     GACr}^{H\mathbb{F}}(\overrightarrow {\rm  Ind} (M, -))
 \ar[r] \ar[d]
& {\rm     CACr}^{H\mathbb{F}}(\overrightarrow {\rm  Ind} (M, -))
 \ar[r] \ar[d]
&   {\rm    AACr}^{H\mathbb{F}}(\overrightarrow {\rm  Ind} (M, -))
\ar[d]\\
{\rm    Cr}^{H }(\overrightarrow {\rm  Ind} (M, -))  
\ar[r]   
& {\rm     GACr}^{H }(\overrightarrow {\rm  Ind} (M, -))
 \ar[r]  
& {\rm     CACr}^{H }(\overrightarrow {\rm  Ind} (M, -))
 \ar[r]  
&   {\rm    AACr}^{H }(\overrightarrow {\rm  Ind} (M, -))
}
\end{eqnarray*}
where  all  the  maps  are  inclusions.  
Applying  the  functor  of  fundamental  groupoids 
to  (\ref{eq-26.9.15-1}),  
we  obtain  the  $\Delta$-structure  of  
 the  family  
 \begin{eqnarray}\label{eq-26.9.29.21}
 \{\Pi_1({\rm  Conf}_{n+1}(M, -))\mid  n\geq  0\}
 \end{eqnarray}  
  of  persistence  groupoids.

Suppose  there  is   a     topological  embedding  of  $M$   into  $\mathbb{R}$.
 By  Proposition~\ref{pr-26.9.14.psc2}, 
  we  have  a  commutative  diagram  
 \begin{eqnarray}\label{eq-26.9.15.sc.5}
\xymatrix{
{\rm    Cr}^{H\mathbb{F}}(  {\rm  Ind} (M, -))  
\ar[r] \ar[d]
& {\rm     GACr}^{H\mathbb{F}}(  {\rm  Ind} (M, -))
 \ar[r] \ar[d]
& {\rm     CACr}^{H\mathbb{F}}(  {\rm  Ind} (M, -))
 \ar[r] \ar[d]
&   {\rm    AACr}^{H\mathbb{F}}( {\rm  Ind} (M, -))
\ar[d]\\
{\rm    Cr}^{H }( {\rm  Ind} (M, -))  
\ar[r]   
& {\rm     GACr}^{H }(  {\rm  Ind} (M, -))
 \ar[r]  
& {\rm     CACr}^{H }(  {\rm  Ind} (M, -))
 \ar[r]  
&   {\rm    AACr}^{H }(  {\rm  Ind} (M, -))
}
\end{eqnarray}
where  all  the  maps  are  inclusions.  
Applying  the  functor  of  fundamental  groupoids 
to  (\ref{eq-26.9.15-2}),  
we  obtain  the  $\Delta$-structure  of  
 the  family  
 \begin{eqnarray}\label{eq-26.9.29.22}
 \{\Pi_1({\rm  Conf}_{n+1}(M, -)/\Sigma_{n+1})\mid  n\geq  0\}
 \end{eqnarray}
 of  persistence  groupoids.  
    \end{example}
    
    \begin{example}\label{eq-26.9.20.a1}
    We  use  the  notations  in  Example~\ref{ex-26.9.19.a1}.  
    For  any $\mathbf{t}=(-t_1,t_2) $   in  $\Omega$,  
let   (cf.  \cite{ren-2026-b})
\begin{eqnarray}\label{eq-26.9.19-h1}
\overrightarrow {\rm  Cv} (M, \mathbf{t})&=&\cup_{n\geq  0}
{\rm  Cover}_{n+1}(M, \mathbf{t}),\\
   {\rm  Cv} (M, \mathbf{t})&=&\cup_{n\geq  0}
{\rm  Cover}_{n+1}(M, \mathbf{t})/\Sigma_{n+1}. 
\label{eq-26.9.19-h2}
\end{eqnarray}
Then   for  any  $\mathbf{s}\leq  \mathbf{t}$,    the  diagram  commutes 
\begin{eqnarray}\label{eq-26.9.19-h3}
\xymatrix{
\overrightarrow{\rm  Cv} (M, \mathbf{s})\ar[r]\ar[d]_-{\pi}
& \overrightarrow {\rm  Cv} (M, \mathbf{t})\ar[d]^-{\pi}\\
{\rm   Cv} (M, \mathbf{s})\ar[r]
&  {\rm    Cv} (M, \mathbf{t}) 
}
\end{eqnarray} 
where  the  horizontal  maps  are  canonical  inclusions
and  the  vertical  maps  are  canonical  projections.  
    By (\ref{eq-26.9.19-h1})  -- (\ref{eq-26.9.19-h3}),  
we  have   a   persistence   hyperdigraph 
(which  will  be  called  the  {\it  parametric   dominating  hyperdigraph})
\begin{eqnarray*}
\overrightarrow {\rm  Cv} (M, -)=\{\overrightarrow {\rm  Cv} (M, \mathbf{t})\mid  \mathbf{t}\in\Omega\}
\end{eqnarray*}
which  will  be  called  the  {\it  persistence  dominating  hypergraph}
and  
 a   persistence    hypergraph
 (which  will  be  called  the  {\it  parametric   dominating  hypergraph})
\begin{eqnarray*}
  {\rm  Cv} (M, -)=\{  {\rm  Cv} (M, \mathbf{t})\mid  \mathbf{t}\in\Omega\}
\end{eqnarray*}
together  with  a  persistence  projection 
\begin{eqnarray*}
\pi:  \overrightarrow {\rm  Cv} (M, -)\longrightarrow    {\rm    Cv} (M, -).  
\end{eqnarray*}
By  Proposition~\ref{pr-26.9.12.phdg1},
 we  have  a  commutative  diagram  
 \begin{eqnarray*} 
\xymatrix{
{\rm    Cr}^{H\mathbb{F}}(\overrightarrow {\rm  Cv} (M, -))  
\ar[r] \ar[d]
& {\rm     CACr}^{H\mathbb{F}}(\overrightarrow {\rm  Cv} (M, -))
 \ar[r] \ar[d]
&   {\rm    AACr}^{H\mathbb{F}}(\overrightarrow {\rm  Cv} (M, -))
\ar[d]\\
{\rm    Cr}^{H }(\overrightarrow {\rm  Cv} (M, -))  
 \ar[r]  
& {\rm     CACr}^{H }(\overrightarrow {\rm  Cv} (M, -))
 \ar[r]  
&   {\rm    AACr}^{H }(\overrightarrow {\rm  Cv} (M, -))
}
\end{eqnarray*} 
where  all  the  maps  are  inclusions.  
Applying  the  functor  of  fundamental  groupoids 
to  (\ref{eq-26.9.19-h1}),  
we  obtain  the  independence  hyperdigraphic  structure,  which  is  certain   dual  version 
of  the  $\Delta$-structure  of  (\ref{eq-26.9.29.21}),  
 of  
 the  family  
 \begin{eqnarray*}
 \{\Pi_1({\rm  Cover}_{n+1}(M, -))\mid  n\geq  0\}
 \end{eqnarray*} 
  of  persistence  groupoids.

Suppose  there  is   a     topological  embedding  of  $M$   into  $\mathbb{R}$.
 By  Proposition~\ref{pr-26.9.13.phg1}, 
  we  have  a  commutative  diagram  
 \begin{eqnarray}\label{eq-26.9.19.cv.15}
\xymatrix{
{\rm    Cr}^{H\mathbb{F}}(  {\rm  Cv} (M, -))  
 \ar[r] \ar[d]
& {\rm     CACr}^{H\mathbb{F}}(  {\rm  Cv} (M, -))
 \ar[r] \ar[d]
&   {\rm    AACr}^{H\mathbb{F}}( {\rm  Cv} (M, -))
\ar[d]\\
{\rm    Cr}^{H }( {\rm   Cv} (M, -))  
 \ar[r]  
& {\rm     CACr}^{H }(  {\rm  Cv} (M, -))
 \ar[r]  
&   {\rm    AACr}^{H }(  {\rm  Cv} (M, -))
}
\end{eqnarray} 
where  all  the  maps  are  inclusions.  
Applying  the  functor  of  fundamental  groupoids 
to  (\ref{eq-26.9.19-h2}),  
we  obtain  the  independence  hypergraphic  structure,
   which  is      certain   dual  version 
of  the  $\Delta$-structure  in  (\ref{eq-26.9.29.22}),    of  
 the  family  
 \begin{eqnarray*}
 \{\Pi_1({\rm  Cover}_{n+1}(M, -)/\Sigma_{n+1})\mid  n\geq  0\} 
 \end{eqnarray*} 
  of  persistence  groupoids.  
    \end{example}

    \begin{example}\label{ex-26-9-19-5.6}
     We  use  the  notations  in  the  Examples~\ref{ex-26.9.15.x1},  \ref{ex-26.9.15.1}  and  
     \ref{eq-26.9.20.a1}.   
     Let  $f:  M\longrightarrow  M'$  be  a  bi-Lipschitz  map  satisfying  (\ref{ex-26.9.15.x1}).  
    For  any   $\mathbf{s}\leq  \mathbf{t}$  where  
    $\mathbf{s}=(-s_1,s_2)$  and   $\mathbf{t}=(-t_1,t_2)$, 
       let 
     $\mathbf{s}'=(-cs_1,c' s_2)$  and   $\mathbf{t}'=(-ct_1,c't_2)$.   
     
     \begin{enumerate}[(1)]
     \item
The  following  diagram    commutes  
 \begin{eqnarray*}
\xymatrix{
\overrightarrow {\rm  Ind}(M, \mathbf{s} ) \ar[rrrr]^-{\overrightarrow{\rm  Ind}(f)}\ar[ddd]\ar[rd]^-{\pi}
&&&& \overrightarrow {\rm  Ind}(M', \mathbf{s}') \ar[ddd]\ar[ld]^-{\pi}\\
 &  {\rm  Ind}(M, \mathbf{s} ) \ar[rr]^-{{\rm  Ind}(f)}\ar[d]&& {\rm  Ind}(M',  \mathbf{s}')\ar[d]&\\
&{\rm  Ind}(M, \mathbf{t}  ) \ar[rr]^-{   {\rm  Ind}(f)} &&{\rm  Ind}(M',  \mathbf{t}') &\\
\overrightarrow {\rm  Ind}(M, \mathbf{t}  ) \ar[rrrr]^-{\overrightarrow{\rm  Ind}(f)}  \ar[ru]^-{\pi}
&&&& \overrightarrow {\rm  Ind}(M', \mathbf{t}')\ar[lu]^-{\pi}    
}
\end{eqnarray*} 
where   all  the  vertical  maps  are  canonical  inclusions.  
Thus 
$f$  induces  a  persistence  directed  simplicial  map   
\begin{eqnarray}\label{eq-26.9.15.b6}
\overrightarrow{\rm  Ind}(f):  \overrightarrow {\rm  Ind}(M,- )\longrightarrow 
 \overrightarrow {\rm  Ind}(M', -)
\end{eqnarray}
and   a  persistence  simplicial  map
\begin{eqnarray*} 
 {\rm  Ind}(f):    {\rm  Ind}(M, - )\longrightarrow    {\rm  Ind}(M', -)
\end{eqnarray*}
such  that  
\begin{eqnarray}\label{eq-26.9.15-cmt2}
\pi\circ  \overrightarrow{\rm  Ind}(f)=  {\rm  Ind}(f)\circ \pi.
\end{eqnarray} 
Therefore,  $f$  induces  a  persistence  
homomorphism  of  persistent  homology groups
\begin{eqnarray}\label{eq-26.9.15.30}
\overrightarrow{\rm  Ind}(f)_*:  H_\bullet(\overrightarrow {\rm  Ind}(M,-))\longrightarrow  
H_\bullet(\overrightarrow {\rm  Ind}(M',-)). 
\end{eqnarray}
By  Proposition~\ref{pr-26.9.14.sc.f}  and  (\ref{eq-26.9.15.30}),
 we  have a  commutative  diagram 
 \begin{eqnarray*} 
 \xymatrix{
  {\rm  GA  Cr}^{H\mathbb{F}}( \overrightarrow {\rm  Ind}(M,- ) )\ar[r]\ar[d]
  &{\rm G A  Cr}^H( \overrightarrow {\rm  Ind}(M,- )) \ar[d]
 \\
   {\rm  GA  Cr}^{H\mathbb{F}}( \overrightarrow {\rm  Ind}(M',- ) )\cup  {\rm  Cr}^{H\mathbb{F}} (\overrightarrow{\rm  Ind}(f))\ar[r]
  &{\rm  G A  Cr}^H( \overrightarrow {\rm  Ind}(M',- ))\cup  {\rm  Cr}^{H} (\overrightarrow{\rm  Ind}(f)) 
     }
 \end{eqnarray*}
 where  all  the  maps are  inclusions. 
 Moreover,  
if  there  are  topological  embeddings  of   $M$  and   $M'$  into  $\mathbb{R}$,  
then 
 $f$  induces  a  homomorphism  of  persistent  homology groups
\begin{eqnarray}\label{eq-26.9.15.31}
 {\rm  Ind}(f)_*:  H_\bullet(  {\rm  Ind}(M,-))\longrightarrow  
H_\bullet(  {\rm  Ind}(M',-))  
\end{eqnarray}
such that   the  diagram  commutes
  \begin{eqnarray*}
  \xymatrix{
 H_\bullet(\overrightarrow {\rm  Ind}(M,-)) \ar[r]^-{\overrightarrow {\rm  Ind}(f)_*} \ar[d]_-{\pi_*}
 & H_\bullet(\overrightarrow {\rm  Ind}(M',-))\ar[d]^-{\pi_*}\\
 H_\bullet(  {\rm  Ind}(M,-)) \ar[r]^-{ {\rm  Ind}(f)_*}
 & H_\bullet(  {\rm  Ind}(M',-)).    
  }
  \end{eqnarray*}
 By  Proposition~\ref{th-26.9.h14.55}  and  (\ref{eq-26.9.15.31}),
 we  have a  commutative  diagram 
 \begin{eqnarray}\label{diag-26.9.15.37}
 \xymatrix{
  {\rm  GA  Cr}^{H\mathbb{F}}(   {\rm  Ind}(M,- ) )\ar[r]\ar[d]
  &{\rm G A  Cr}^H(   {\rm  Ind}(M,- )) \ar[d]
 \\
   {\rm  GA  Cr}^{H\mathbb{F}}(   {\rm  Ind}(M',- ) )\cup  {\rm  Cr}^{H\mathbb{F}} ( {\rm  Ind}(f))\ar[r]
  &{\rm  G A  Cr}^H(   {\rm  Ind}(M',- ))\cup  {\rm  Cr}^{H} ( {\rm  Ind}(f)) 
     }
 \end{eqnarray}
 where  all  the  maps are  inclusions.  
 
\item
Similar  with  (\ref{eq-26.9.15.b6})  --  (\ref{eq-26.9.15.30}),   
$f$  induces  a  persistence  morphism    of  hyperdigraphs 
\begin{eqnarray*} 
\overrightarrow{\rm  Cv}(f):  \overrightarrow {\rm  Cv}(M,- )\longrightarrow 
 \overrightarrow {\rm  Cv}(M', -)
\end{eqnarray*}
and   a  persistence  morphism  of  hypergraphs  
\begin{eqnarray*} 
 {\rm  Cv}(f):    {\rm  Cv}(M, - )\longrightarrow    {\rm  Cv}(M', -)
\end{eqnarray*}
such  that  
\begin{eqnarray*} 
\pi\circ  \overrightarrow{\rm  Cv}(f)=  {\rm  Cv}(f)\circ \pi.
\end{eqnarray*} 
Consequently,  $f$  induces  a  persistence  
homomorphism  of  persistent  embedded  homology groups
\begin{eqnarray}\label{eq-26.9.19.30}
\overrightarrow{\rm  Cv}(f)_*:  H_\bullet(\overrightarrow {\rm  Cv}(M,-))\longrightarrow  
H_\bullet(\overrightarrow {\rm  Cv}(M',-)). 
\end{eqnarray}
 By  Proposition~\ref{th-26.9.h9.12}  and  (\ref{eq-26.9.19.30}),
 we  have a  commutative  diagram 
 \begin{eqnarray*} 
 \xymatrix{
  {\rm  CA  Cr}^{H\mathbb{F}}( \overrightarrow {\rm  Cv}(M,- ) )\ar[r]\ar[d]
  &{\rm C A  Cr}^H( \overrightarrow {\rm  Cv}(M,- )) \ar[d]
 \\
   {\rm  CA  Cr}^{H\mathbb{F}}( \overrightarrow {\rm  Cv}(M',- ) )\cup  {\rm  Cr}^{H\mathbb{F}} (\overrightarrow{\rm  Cv}(f))\ar[r]
  &{\rm  C A  Cr}^H( \overrightarrow {\rm  Cv}(M',- ))\cup  {\rm  Cr}^{H} (\overrightarrow{\rm  Cv}(f)) 
     }
 \end{eqnarray*}
 where  all  the  maps are  inclusions.  
Moreover,  
if  there  are  topological  embeddings  of   $M$  and   $M'$  into  $\mathbb{R}$,  
then 
 $f$  induces     a  homomorphism  of  persistent  embedded  homology groups
\begin{eqnarray}\label{eq-26.9.19.31}
 {\rm  Cv}(f)_*:  H_\bullet(  {\rm  Cv}(M,-))\longrightarrow  
H_\bullet(  {\rm  Cv}(M',-))  
\end{eqnarray}
such that   the  diagram  commutes
  \begin{eqnarray*}
  \xymatrix{
 H_\bullet(\overrightarrow {\rm  Cv}(M,-)) \ar[r]^-{ \overrightarrow{\rm  Cv}(f)_*} \ar[d]_-{\pi_*}
 & H_\bullet(\overrightarrow {\rm  Cv}(M',-))\ar[d]^-{\pi_*}\\
 H_\bullet(  {\rm  Cv}(M,-)) \ar[r]^-{ {\rm  Cv}(f)_*}
 & H_\bullet(  {\rm  Cv}(M',-)).   
  }
  \end{eqnarray*}
  By  Proposition~\ref{th-26.9.h14.12}  and  (\ref{eq-26.9.19.31}),
 we  have a  commutative  diagram 
 \begin{eqnarray*} 
 \xymatrix{
  {\rm  CA  Cr}^{H\mathbb{F}}(   {\rm  Cv}(M,- ) )\ar[r]\ar[d]
  &{\rm  CA  Cr}^H(   {\rm  Cv}(M,- )) \ar[d]
 \\
   {\rm  CA  Cr}^{H\mathbb{F}}(   {\rm  Cv}(M',- ) )\cup  {\rm  Cr}^{H\mathbb{F}} ( {\rm  Cv}(f))\ar[r]
  &{\rm  C A  Cr}^H(   {\rm  Cv}(M',- ))\cup  {\rm  Cr}^{H} ( {\rm  Cv}(f)) 
     }
 \end{eqnarray*}
 where  all  the  maps are  inclusions.   
\end{enumerate}
    \end{example}
    
 In  Example~\ref{eq-26.9.20.a1}  and  Example~\ref{ex-26-9-19-5.6},  
    according  to  the  definitions  in  \cite{cam23,comalg}, 
    the  dominating  hypergraph
    ${\rm  Cv}(M,\mathbf{t})$  is  an  {\it  independence  hypergraph}
     for  each  $\mathbf{t}\in\Omega$.  
    Thus   the  parametric   dominating  hypergraph
    ${\rm  Cv}(M,-)$  is  a   {\it  persistence   independence  hypergraph}.  
    However,  the  constrained  cohomology  constructed  in   \cite{cam23,comalg}
    only  applies  to   independence  hypergraphs  on  finite  vertices.  
    Thus  if  $M$  is an  infinite  space, 
    then  the   constrained  cohomology  does  not  apply  to  
     the  (parametric)  dominating  hypergraphs.

    \begin{example}\label{ex-26.9.15.3}
    Let   $G=(V,E)$   be  a  graph.  
    For any  $u,v\in  V$  and  any  nonnegative  integer  $n$,  
    a  path  of  length  $n$    in   $G$   from  $u$  to  $v$
      is  a   sequence  $v_0v_1\ldots  v_n$  of  vertices  in  $V$ 
    such  that  $v_0=u$,  $v_n=v$  and  
    $\{v_{i-1}, v_i\}\in  E$  for  each  $1\leq  i\leq  n$.  
    A  path  of  length  $0$  is a  single  vertex.  
    Let   $d_G$  be  the  extended  geodesic    distance  on   $V$
    such  that    $d_G(u,v)$  is  the  minimal length  
    of  paths  in  $G$  from  $u$  to  $v$.  
    Note  that  $d_G(u,v)=+\infty$  if  and  only  if  $u$  and  $v$  are  in   different
      path-connected  components  of  $G$.     
    Thus   $(V,d_G)$  is  an  extended  metric  space.  
    Moreover,    $(V,d_G)$   is  a  metric  space   
    if  and  only  if   $G$  is  path-connected.  
            For  any  $1\leq   d\leq  +\infty$,  
    let  $G^d$  be  the  $d$-distance  power  of  $G$, 
    which  is  the  graph 
    obtained  from  $G$  by  adding  all  the  edges  $\{u,v\}$  such that  
    $u,v\in  V$  and   $d_G(u,v)\leq  d$. 
    Note  that  $G^1=G$
    and   there  is  a filtration  of  graphs 
    \begin{eqnarray}\label{eq-26.9.16.1}
    G^1\subseteq  G^2\subseteq  \cdots  \subseteq  G^d\subseteq  \cdots
    \end{eqnarray}
    such  that  $G^\infty=\cup_{d\geq  1}  G^d$  is  the  disjoint  union  of  complete  graphs  
    on  the  path-connected  components of  $G$.  
    \begin{enumerate}[(1)]
    \item
    An   independent  set  of  $G$  is  a  subset  $\sigma$  of  $V$
    whose  elements  are  mutually  non-adjacent  in  $G$.  
    The  independence  complex  ${\rm  Ind}(G)$  is  the  simplicial  complex  on  $V$
     consisting  of  all  the  finite  independent  sets  of  $G$.  
     Letting   $M$  be  $(V,  d_G)$  and  letting   $\mathbf{t}=(-1/2, +\infty)$   
     in  Example~\ref{ex-26.9.15.1}, 
     we  obtain  
     ${\rm  Ind}(G)={\rm  Ind}(V, (-1/2, +\infty) )$.  
    By  (\ref{eq-26.9.16.1}),  there  is  an  induced  filtration  of   simplicial  complexes 
    \begin{eqnarray}\label{eq-26.9.16.2}
    {\rm   Ind}( G^1)\supseteq {\rm   Ind}( G^2)\supseteq  \cdots  \supseteq  {\rm   Ind}(G^d)
    \supseteq  \cdots  
    \end{eqnarray}
    such  that    
    \begin{eqnarray}\label{eq-26.9.16.19}
   {\rm   Ind}(G^\infty)= \cap_{d\geq  1}    {\rm   Ind}(G^d)
    \end{eqnarray} 
    is  a  simplicial  complex  on  $V$  
    where  each  simplex  of  (\ref{eq-26.9.16.19})   consists of  vertices  from  
    distinct  path-connected  components  of   $G$.  
    Letting   $M$  be  $(V,  d_G)$  and  letting   $\mathbf{t}=(-d/2, +\infty)$   
     in  Example~\ref{ex-26.9.15.1}, 
     we  obtain  
     ${\rm  Ind}(G^d)={\rm  Ind}(V, (-d/2, +\infty) )$ 
     thus  (\ref{eq-26.9.16.2})  can  be  written  equivalently  as  
     \begin{eqnarray}\label{eq-26.9.16.3}
    {\rm   Ind}( V, (-\frac{1}{2}, +\infty))\supseteq {\rm   Ind}( V, (-1, +\infty))
    \supseteq  \cdots  \supseteq  {\rm   Ind}(V, (-\frac{d}{2}, +\infty))   
    \supseteq  \cdots  
    \end{eqnarray}
    such  that    
         \begin{eqnarray}\label{eq-26.9.16.5}
   {\rm   Ind}(V, (-\infty, +\infty))=  \cap_{d\geq  1}    {\rm   Ind}(V, (-\frac{d}{2}, +\infty)).  
       \end{eqnarray}
    By  (\ref{eq-26.9.16.2})  and  (\ref{eq-26.9.16.19}),  
    or  equivalently,   (\ref{eq-26.9.16.3})  and  (\ref{eq-26.9.16.5}),  
     we  have  a  persistence  simplicial  complex  (the  parametric  independence  complex)
     \begin{eqnarray}\label{eq-26.9.16.7}
    {\rm   Ind}(G^-)  &=& \{ {\rm   Ind}(G^d)\mid    1\leq  d\leq  +\infty\}   \\
      &=&\{ {\rm   Ind}(V, \mathbf{t})\mid  \mathbf{t}=  (-\frac{d}{2}, +\infty),  1\leq  d\leq  +\infty\}\nonumber  \\
      &=&  {\rm   Ind}(V, -)\nonumber
     \end{eqnarray}
     whose family  of  simplicial  maps  are   canonical  inclusions.   
      Applying   Proposition~\ref{pr-26.9.14.psc2}  or   (\ref{eq-26.9.15.sc.5}) 
       to  (\ref{eq-26.9.16.7}), 
  we  have  a  commutative  diagram {\small
 \begin{eqnarray*} 
\xymatrix{
{\rm    Cr}^{H\mathbb{F}}(      {\rm   Ind}(G^-) )  
\ar[r] \ar[d]
& {\rm     GACr}^{H\mathbb{F}}(     {\rm   Ind}(G^-) )
 \ar[r] \ar[d]
& {\rm     CACr}^{H\mathbb{F}}(      {\rm   Ind}(G^-) )
 \ar[r] \ar[d]
&   {\rm    AACr}^{H\mathbb{F}}(     {\rm   Ind}(G^-) )
\ar[d]\\
{\rm    Cr}^{H }(    {\rm   Ind}(G^-) )  
\ar[r]   
& {\rm     GACr}^{H }(     {\rm   Ind}(G^-) )
 \ar[r]  
& {\rm     CACr}^{H }(      {\rm   Ind}(G^-) )
 \ar[r]  
&   {\rm    AACr}^{H }(     {\rm   Ind}(G^-) )
}
\end{eqnarray*}}where  all  the  maps  are  inclusions.  

\item
A  {\it  dominating   set}  of  $G$  is  a  subset  $\sigma$  of  $V$
such  that  every vertex  of  $G$  is  adjacent  to  some  vertex  in  $\sigma$.   
The  {\it   dominating  hypergraph}   ${\rm  Cv}(G)$  
is  the  independence  hypergraph
  (cf.  \cite[Definition~2.9]{cam23}  and  \cite[Section~5]{comalg})  on  $V$
 consisting  of  all  the  finite  dominating  sets  of   $G$.  
 Letting   $M$  be  $(V,  d_G)$  and  letting   $\mathbf{t}=(0, 1)$   
     in  Example~\ref{eq-26.9.20.a1}, 
     we  obtain  
     ${\rm  Cv}(G)={\rm  Cv}(V, (0, 1) )$.  
      By  (\ref{eq-26.9.16.1}),  there  is  an  induced  filtration  of   independence  hypergraphs 
    \begin{eqnarray}\label{eq-26.9.19.ih2}
    {\rm   Cv}( G^1)\subseteq {\rm   Cv}( G^2)\subseteq  \cdots  \subseteq  {\rm   Cv}(G^d)
    \subseteq  \cdots  
    \end{eqnarray}
    such  that    
    \begin{eqnarray}\label{eq-26.9.20.19}
   {\rm   Cv}(G^\infty)= \cup_{d\geq  1}    {\rm   Cv}(G^d)
    \end{eqnarray} 
  is  an  independence  hypergraph  on   $V$  
  where  each  hyperedge  of  (\ref{eq-26.9.20.19})
   contains    at   least  one  vertex  in  every   path-connected  component  of  $G$.        
   Letting   $M$  be  $(V,  d_G)$  and  letting   $\mathbf{t}=(0, d )$   
     in  Example~\ref{eq-26.9.20.a1}, 
     we  obtain  
   ${\rm  Cv}(G^d)={\rm  Cv}(V, (0, d ) )$  
 thus  (\ref{eq-26.9.19.ih2})  can  be  written  equivalently  as  
     \begin{eqnarray}\label{eq-26.9.20.3}
    {\rm  Cv}( V, (0, 1))\subseteq {\rm  Cv}( V, (0, 2 ))
    \subseteq  \cdots  \subseteq  {\rm  Cv}(V, (0, d ))   
    \subseteq  \cdots  
    \end{eqnarray}
    such  that    
         \begin{eqnarray}\label{eq-26.9.20.5}
   {\rm   Cv}(V, (0 +\infty))=  \cup_{d\geq  1}    {\rm   Cv}(V, (0, d)).  
       \end{eqnarray}
       By  (\ref{eq-26.9.19.ih2})  and  (\ref{eq-26.9.20.19}),  
    or  equivalently,   (\ref{eq-26.9.20.3})  and  (\ref{eq-26.9.20.5}),  
     we  have  a  persistence  independence  hypergraph
     (the  parametric  dominating  hypergraph)  
     \begin{eqnarray}\label{eq-26.9.20.7}
    {\rm   Cv}(G^-)  &=& \{ {\rm   Cv}(G^d)\mid    1\leq  d\leq  +\infty\}   \\
      &=&\{ {\rm   Cv}(V, \mathbf{t})\mid  \mathbf{t}=  (0, d),  1\leq  d\leq  +\infty\}\nonumber  \\
      &=&  {\rm   Cv}(V, -)\nonumber
     \end{eqnarray}
     whose family  of  morphisms  are   canonical  inclusions.   
      Applying   Proposition~\ref{pr-26.9.13.phg1}    or   (\ref{eq-26.9.19.cv.15}) 
       to  (\ref{eq-26.9.20.7}), 
  we  have  a  commutative  diagram {\small
 \begin{eqnarray*} 
\xymatrix{
{\rm    Cr}^{H\mathbb{F}}(  {\rm  Cv} (G^-))  
 \ar[r] \ar[d]
& {\rm     CACr}^{H\mathbb{F}}(  {\rm  Cv} (G^-))
 \ar[r] \ar[d]
&   {\rm    AACr}^{H\mathbb{F}}( {\rm  Cv} (G^ -))
\ar[d]\\
{\rm    Cr}^{H }( {\rm   Cv} (G^-))  
 \ar[r]  
& {\rm     CACr}^{H }(  {\rm  Cv} (G^ -))
 \ar[r]  
&   {\rm    AACr}^{H }(  {\rm  Cv} (G^ -))
}
\end{eqnarray*}}where  all  the  maps  are  inclusions.  
\end{enumerate}
     \end{example}

     \begin{example}\label{ex-26-9-19-5.8}
 Let  $G=(V,E)$  and  $G'=(V',E')$  be  graphs.  
 A  {\it  morphism}  $f: G\longrightarrow  G'$ 
  is  a  map  $f:  V\longrightarrow V'$  such  that  
 for any  $\{u,v\}\in  E$,  it  satisfies  that  $\{f(u), f(v)\}\in  E'$.       
  Any  morphism  $f: G\longrightarrow  G'$       sends   
 a  path  in  $G$     to  a  path   of  the  same  length   in  $G'$. 
 Thus  for  any  $u,v\in  V$,  
 \begin{eqnarray}\label{2601-ine1}
 d_G(u,v)\geq  d_{G'}(f(u),f(v)). 
 \end{eqnarray}
We  call  $f$  an  {\it   isometric  embedding} 
  if  the  equality  in  (\ref{2601-ine1})  holds  for  any  $u,v\in  V$.  
  For  any   
  positive  number  $R$,
  we  call  $f$  an  {\it  isometric  submersion  with  diameter  $R$}    if      
  $f$  is  surjective  and    
  the  equality  in  (\ref{2601-ine1})  holds  for  any  $u,v\in  V$ 
  with  $d_G(u,v)\leq  R$.  
  
  \begin{enumerate}[(1)]
  \item
  Let  $f:  G\longrightarrow  G'$  be  an  isometric   embedding.  
  Then  $f$  is   a   bi-Lipschitz  map from  $(V,d_G)$  to  $(V',d_{G'})$  
  with  $c=c'=1$  in   (\ref{eq-26.9.12-bilip1}).  
  Thus  $f$  induces  a      simplicial  map 
  \begin{eqnarray*}
  {\rm   Ind}(f):   {\rm  Ind}(G^d)\longrightarrow  {\rm  Ind}(G'^d)
  \end{eqnarray*}
  for  any   $1\leq  d\leq  +\infty$  and  consequently   induces  
  a  persistence  simplicial  map
  \begin{eqnarray}\label{eq-26.9.16.m7}
    {\rm   Ind}(f):   {\rm  Ind}(G^-)\longrightarrow  {\rm  Ind}(G'^-).  
  \end{eqnarray}
   Applying   Proposition~\ref{th-26.9.h14.55}  or   (\ref{diag-26.9.15.37}) 
       to  (\ref{eq-26.9.16.m7}), 
  we  have  a  commutative  diagram  
 \begin{eqnarray*} 
 \xymatrix{
  {\rm  GA  Cr}^{H\mathbb{F}}(   {\rm  Ind}(G^- ) )\ar[r]\ar[d]
  &{\rm G A  Cr}^H(   {\rm  Ind}(G^- )) \ar[d]
 \\
   {\rm  GA  Cr}^{H\mathbb{F}}(   {\rm  Ind}({G'}^-  ) )\cup  {\rm  Cr}^{H\mathbb{F}} ( {\rm  Ind}(f))\ar[r]
  &{\rm  G A  Cr}^H(   {\rm  Ind}({G'}^-   ))\cup  {\rm  Cr}^{H} ( {\rm  Ind}(f)) 
     }
\end{eqnarray*}
where  all  the  maps  are  inclusions. 
Moreover,  the  restriction  of  a  covering  of   $G'$  gives  a  covering  of  $G$.  
Thus  $f$  induces  a      trace  map
  \begin{eqnarray}\label{eq-26.9.20.t1}
  {\rm   Trace}(f):   {\rm  Cv}(G'^d)\longrightarrow  {\rm  Cv}(G^d)
  \end{eqnarray}
  for  any   $1\leq  d\leq  +\infty$ 
   sending  any    $\{x_0,x_1,\ldots,x_n\}$  in  ${\rm  Cv}(G^d)$
     to  its  intersection  with  the  vertex  set  of  $G$.  
     Note  that  (\ref{eq-26.9.20.t1})   is  not  a  morphism  of  hypergraphs  in  general.  
     
     \item
     Let  $f:  G\longrightarrow  G'$  be  an  isometric  submersion  with  diameter  $R$.  
     Then  $f$  induces  a  morphism  of  hypergraphs 
     \begin{eqnarray*}
     {\rm  Cv}(f):  {\rm  Cv}(G^d)\longrightarrow {\rm  Cv}(G'^d)
     \end{eqnarray*}
     for  any  $1\leq  d\leq   R$.  
     Thus  $f$  induces  a  persistence  morphism  
     \begin{eqnarray*} 
     {\rm  Cv}(f):  {\rm  Cv}(G^-)\longrightarrow {\rm  Cv}(G'^-)
     \end{eqnarray*}
     where  the  parameter  $d$  in the  persistence  hypergraphs  are in  the  interval  $[1,R]$.     
     \end{enumerate}
     \end{example}

 In  Example~\ref{ex-26.9.15.3}
 and  Example~\ref{ex-26-9-19-5.8},
   if   the  vertex  sets  of  the  graphs  are  finite,  
   then  the  differential  cauculus  as  well as  the  
   constrained  (co)homology  for  simplicial  complexes  and   independence  hypergraphs  
   (cf.  \cite{cam23,  comalg})  can  be  applied  to  the  parametric  
   independence  complexes  and  the  parametric  dominating  hypergraphs.  
   Certain  $\bar\Delta$-structures   (cf.  \cite{jgp,comalg,ren-2026-b})  apply  to  the  space  of   coverings
   and  dominating  hypergraphs,  while  the  classical  $\Delta$-structures  apply 
   to  the  space  of  packings  and  independence  complexes.  
  The  continuing  discussion  is  beyond  the  scope  of  this  paper.

    \bigskip

Shiquan Ren

Address:
School  of  Mathematics and Statistics,  Henan University,  Kaifeng   475004,  China.

e-mail:  renshiquan@henu.edu.cn

  \end{document}